\newcounter{ipotesi}
 \makeatletter \@addtoreset{equation}{section}
\newtheorem{thm}{Theorem}[section]
\newtheorem{hyp}[thm]{Hypotheses}{\rm}
{\rm}
\newtheorem{lemm}[thm]{Lemma}
\newtheorem{cor}[thm]{Corollary}
\newtheorem{prop}[thm]{Proposition}
\newtheorem{defi}[thm]{Definition}
\newtheorem{rmk}[thm]{Remark}{\rm}
\newcounter{parentenv}
\newcommand{\R}{{\mathbb R}}
\newcommand{\E}{{\mathbb E}}
\newcommand{\N}{{\mathbb N}}
\newcommand{\X}{{\mathcal{X}}}
\newcommand{\K}{{\mathcal{K}}}
\newcommand{\D}{{\mathcal{D}}}
\newcommand{\J}{{\mathcal{D}}}
\newcommand{\eps}{\varepsilon}
\newcommand{\ra}{\rightarrow}
\newcommand{\ol}[1]{\overline{#1}}
\renewcommand{\tilde}[1]{\widetilde{#1}}
\newcommand{\Dom}{{\operatorname{Dom}}}
\newcommand{\Id}{{\operatorname{I}}}
\newcommand{\set}[1]{{\left\{#1\right\}}}
\newcommand{\pa}[1]{{\left(#1\right)}}
\newcommand{\sq}[1]{{\left[#1\right]}}
\newcommand{\abs}[1]{{\left|#1\right|}}
\newcommand{\norm}[1]{{\left\|#1\right\|}}
\newcommand{\scal}[2]{{\left\langle #1,#2\right\rangle}}
\newcommand{\mi}[1]{{\lbrace #1(t,x)\rbrace_{t\geq 0}}}
\newcommand{\eqsys}[1]{{\left\{\begin{array}{ll}#1\end{array}\right.}}
\newcommand{\tc}{\, \middle |\,}
\begin{document}

\frenchspacing


\title[Schauder regularity results in separable Hilbert spaces]{Schauder regularity results in separable Hilbert spaces}

\author[D.A. Bignamini and S. Ferrari ]{Davide A. Bignamini, Simone Ferrari$^*$}

\address{D.A.B.: Dipartimento di Matematica, Universit\`a degli Studi di Pavia, Via Adolfo Ferrata 5, 27100 PAVIA, Italy}
\email{\textcolor[rgb]{0.00,0.00,0.84}{davideaugusto.bignamini@unipv.it}}

\address{S.F.:  Dipartimento di Matematica e Fisica ``Ennio De Giorgi'', Universit\`a del Salento, Via per Arnesano, 73100 LECCE, Italy}
\email{\textcolor[rgb]{0.00,0.00,0.84}{simone.ferrari@unisalento.it}}

\thanks{$^*$Corresponding author.}

\subjclass[2020]{35R15, 47D07, 60J35.}

\keywords{Bismut--Elworthy--Li formulae, evolution equations, Schauder estimates, semilinear stochastic partial differential equations, stationary equations, Zygmund spaces.}

\date{\today}

\begin{abstract}
We prove Schauder type estimates for solutions of stationary and evolution equations driven by weak generators of transition semigroups associated to a semilinear stochastic partial differential equations with values in a separable Hilbert space.
\end{abstract}

\maketitle

\section{Introduction}

Let $\X$ be a real separable Hilbert space with inner product $\scal{\cdot}{\cdot}$ and associated norm $\norm{\cdot}$. Let $\{W(t)\}_{t\geq 0}$ be a $\X$-cylindrical Wiener process defined on a normal filtered probability space $(\Omega,\mathcal{F},\{\mathcal{F}_t\}_{t\geq 0},\mathbb{P})$.
Consider the stochastic partial differential equation
\begin{gather}\label{eqF02}
\eqsys{
dX(t,x)=\big[AX(t,x)+RG(X(t,x))\big]dt+RdW(t), & t>0;\\
X(0,x)=x\in \X,
}
\end{gather} 
where $A:{\rm Dom}(A)\subseteq\X\ra\X$ is a linear (possibly) unbounded operator, $R:\X\ra\X$ is a linear and continuous operator and $G:\X\ra\X$ is a smooth enough function, and consider its mild solution $\{X(t,x)\}_{t\geq 0}$, which exists under suitable conditions. Semilinear stochastic partial differential equations are widely studied in the literature, see for example \cite{AD-BA-MA1, AD-MA-PR1,BON-FUR1,FUR1,MAS1,MAS2}. Under suitable assumptions \eqref{eqF02} has a unique mild solution $\mi{X}$ (see Definition \ref{DEFImild}) and the associated transition semigroup $\{P(t)\}_{t\geq 0}$ defined as
\begin{align}\label{intr_trans}
P(t)f(x):=\E[f(X(t,x))],\qquad t>0,\ x\in\X,
\end{align}
where $f:\X\ra\R$ is a Borel measurable function, is well defined. Moreover $\{P(t)\}_{t\geq 0}$ is a weakly continuous  semigroup (in the sense of \cite[Appendix B]{CER1}) in ${\rm BUC}(\X)$, the space of real-valued, bounded and uniformly continuous functions. Its weak generator $N:{\rm Dom}(N)\subseteq {\rm BUC}(\X)\ra{\rm BUC}(\X)$ is the unique closed operator whose resolvent is given by
\begin{align*}
R(\lambda,N)f(x)=\int_0^{+\infty} e^{-\lambda s}P(s)f(x)ds,\qquad \lambda>0,\ x\in\X,\ f\in{\rm BUC}(\X).
\end{align*}
We recall that the operator $N$ acts on bounded, real-valued, cylindrical and smooth enough functions $\varphi$ as
\[N\varphi(x)=\frac{1}{2}{\rm Trace}[R^2\D^2\varphi(x)]+\langle x,A^*\D \varphi(x)\rangle+\langle R\D G(x),\D \varphi(x)\rangle,\qquad x\in\X,\]
see, for example \cite{GK01}. In this paper we investigate Schauder type estimates for the function
\begin{align}\label{GS}
u(x):=R(\lambda,N)f(x);\qquad \lambda>0,\ x\in\X,\ f\in{\rm BUC}(\X),
\end{align}
namely for the solution of the stationary equation
\begin{align}\label{intr_problema_stazionario}
\lambda u-Nu=f,\qquad \lambda>0,\ f\in{\rm BUC}(\X).
\end{align}
Moreover we will also study Schauder regularity results for the function 
\begin{align}\label{mild_evol}
v(t,x):= P(t)f(x)+\int_0^t P(t-s)g(s,\cdot)(x)ds,\qquad T>0,\ t\in[0,T],\ x\in\X,
\end{align}
where $f,g$ belong to suitable H\"older space. The function $v$ is the mild solution of the evolution equation driven by $N$, namely for $T>0$
\begin{align}\label{evol_prob}
\eqsys{\frac{d}{dt}v(t,x)=Nv(t,x)+g(t,x), & t\in(0,T],\ x\in\X;\\
v(0,x)=f(x), & x\in\X.}
\end{align}
We recall that, even in finite dimension, if $N$ is an operator with unbounded coefficients, then the function $v$ in \eqref{mild_evol} do not gain any regularity with respect to the time variable (see, for example, \cite{Lor17}). So we will prove a Schauder regularity result only for the spatial variable (historically, in finite dimension, such type of results were first obtained in \cite{KCL75,KCL80}, for operator with bounded coefficients, while the first result for the case of unbounded coefficients can be found in \cite{Lun97}). The use of H\"older continuous functions reveals to be useful in several problems scuh as uniqueness in law, pathwise uniqueness and uniqueness of the martingale problem for some stochastic partial differential equations. An important tool to prove the above mentioned Schauder type estimates will be a Bismut--Elworthy--Li type formula for \eqref{intr_trans} along the directions of $R(\X)$ (see Theorem \ref{BEL}).

In the finite dimensional setting Schauder estimates are widely studied, see for example \cite{GT01,KCL75,KCL80,LSU68,LU68}, for the bounded coefficients case and \cite{DL95,Lor17} for the case of unbounded coefficients, while the theory for the infinite dimensional case is less developed. In \cite{CDP96-2} and \cite[Section 6.4.1]{DPZ02} the authors study the case $G\equiv0$ and $R={\rm Id}_{\X}$. In this context, the authors benefit from these three elements: the Mehler representation of the semigroup $\{P(t)\}_{t\geq 0}$, the fact that for every $t>0$ it holds $e^{tA}(\X)\subseteq Q_t^{1/2}(\X)$ and
\begin{align}\label{doom}
\|Q_t^{-1/2}e^{tA}x\|\leq K t^{-1/2}\|x\|,\qquad t>0,\ x\in\X,
\end{align}
for some $K>0$, where $Q_t:=\int_0^te^{2sA}ds$, for any $t>0$, and an interpolation result (see \cite{CDP96-2}). Using these facts the authors of \cite{CDP96-2} and \cite[Section 6.4.1]{DPZ02} prove a optimal Schauder regularity result, namely if $f$ is a $\alpha$-H\"older continuous function with $\alpha\in(0,1)$, then the function $u$ given by \eqref{GS} is twice Fr\'echet differentiable with $\alpha$-H\"older continuous second order derivatives.
In \cite{DA5} the author considers the same framework in the case that $G$ is not identically zero. Compared to the previous situation, one cannot exploit the Mehler representation of the semigroup. In \cite{CL19} the case of the classical Ornstein--Uhlenbeck semigroup is considered, i.e. $A=-(1/2){\rm Id}_{\X}$, $G\equiv 0$ and $R=Q^{1/2}$ where $Q$ is a trace class operator. In this case \eqref{doom} is not satisfied, and in fact the authors of \cite{CL19} prove Schauder type theorems replacing the standard notions of Fr\'echet differentiability and H\"olderianity with a differentiability and H\"olderianity along $Q^{1/2}(\X)$, which is the Cameron--Martin space of the Gaussian measure with mean zero and covariance operator $Q$. In general if \eqref{doom} is not verified, it is not possible to obtain a optimal Schauder regularity result with respect to the standard H\"olderianity and Fr\'echet differentiability (see, for instance, \cite{LR21}), barring a few specific cases (see, for example, \cite[Section 5.5]{CER1}).


The aim of this paper is to unify and extend the results of \cite{CDP96-2,CL19,DA5} and of \cite[Section 6.4.1]{DPZ02}. To do so we will prove a Bismut--Elworthy--Li type formula for \eqref{intr_trans} with respect to a suitable differentiability notion (Theorem \ref{BEL}). Accordingly we will study a notion of Gateaux differentiability for the mild solution of \eqref{eqF02} (Definition \ref{defn_Gateaux}) and we will prove a chain rule associated to this type of differentiability (Corollary \ref{accatena}), which will be fundamental in the proof of the Bismut--Elworthy--Li formula. Using this formula some sharp estimates for the derivatives of the transition semigroup $\{P(t)\}_{t\geq 0}$ will be proved (Propositions \ref{boss}, \ref{bossD} and \ref{Holder}). Another result we will prove in order to obtain the Schauder type estimates of this paper is an interpolation theorem (Theorem \ref{intDS}), similar to the one in \cite{CDP96-2}, which is interesting on its own. The previously mentioned results allow us to prove the optimal Schauder regularity for the solutions $u$ and $v$ of \eqref{intr_problema_stazionario} and \eqref{evol_prob}, respectively (see Theorems \ref{THM_SCHAUDER}, \ref{Thm_Zyg} and \ref{Thm_evol}) 
Moreover, exploiting Theorem \ref{THM_SCHAUDER}, we will prove a Schauder type estimates for the solution of a stationary equation driven by an operator which is not necessarily the weak generator of a transition semigroup (Proposition \ref{SCHvar}).

We remark that the results of this paper can be applied to stochastic partial differential equations such as
\begin{align}\label{cremamani}
\eqsys{
dX(t,x)=\big[AX(t,x)+(-A)^{-\gamma}G(X(t,x))\big]dt+(-A)^{-\gamma}dW(t), & t>0;\\
X(0,x)=x\in L^2([0,1]^d),
}
\end{align}
where $\gamma\geq 0$, $A$ is a realization of the Laplacian operator in $L^2([0,1]^d)$ with appropriate boundary conditions, and $d\in\N$. This example is not covered by the theory developed in \cite{CDP96-2,CL19,DA5}. We stress that \eqref{cremamani} is widely studied in other contexts (see, for example, \cite{AD-BA-MA1,MAS2,MAS1,Mas08} and \cite[Chapter 4]{DA04}). In Section \ref{Sect_Examples} the SPDE \eqref{cremamani} is studied within a more general example, for which \eqref{doom} is not verified unless $\gamma=0$.

We wish to point out that in \cite{LR21} the linear non-local case is considered; \cite{CDP96,DPZ02,PRI1,PZ00} the Gross Laplacian and some of its perturbations are considered; \cite{CDP12} the case of reaction-diffusion equations are studied and \cite{CL21} the non-autonomous linear case is investigated. For other related results see also \cite{ABGP06,ABP05,Zam99}.

The paper is organized in the following way: in Section \ref{Notations} we introduce our hypotheses and state the main results of this paper. In Section \ref{Interpolation} we show an interpolation result which is interesting on its own and crucial for the proof of our results. Section \ref{Regularity_Mild} is dedicated to the proof of some estimates for the derivatives of the mild solution of \eqref{eqF02} which will be used in Section \ref{Regularity_Semigroup} to estimate the derivatives of $P(t)f$ given by \eqref{intr_trans}. In Section \ref{Main_Results} we provide the proofs of our main results (Theorems \ref{THM_SCHAUDER}, \ref{Thm_Zyg} and \ref{Thm_evol}). In Section \ref{Sect_Examples} we provide a large class of operators $A$ and $R$ satisfying our assumptions. We conclude with Appendix \ref{appen} where we give the proof of a result about uniformly continuous functions we use throughout the paper.

{\footnotesize \subsection*{Notations} Let $(\Omega,\mathcal{F},\{\mathcal{F}_t\}_{t\geq 0},\mathbb{P})$ be a filtered probability space. We say that $\{\mathcal{F}_t\}_{t\geq 0}$ is a normal filtration if
\[\mathcal{F}_t=\bigcap_{s>t}\mathcal{F}_s \qquad t\geq 0;\]
and $\mathcal{F}_0$ contains all the elements $A\in\mathcal{F}$ such that $\mathbb{P}(A)=0$. Let $\K$ be a separable Banach space and let $\xi:\Omega\ra \K$ be a random variable (with respect to the $\sigma$-algebra $\mathcal{F}$ and the $\sigma$-algebra of the Borel measurable subsets of $\K$). We denote by 
\[
\mathbb{E}[\xi]:=\int_\Omega \xi(w)\mathbb{P}(d\omega)=\int_\K x[\mathbb{P}\circ\xi^{-1}](dx),
\]
the expectation of $\xi$ with respect to $\mathbb{P}$. In this paper when we refer to a $\K$-valued process we mean an adapted process defined on $(\Omega,\mathcal{F},\{\mathcal{F}_t\}_{t\geq 0},\mathbb{P})$ with values in $\K$. 
We say that a $\K$-valued process $\{Y(t)\}_{t\geq 0}$ is continuous if the map $(Y(\cdot))(\omega):[0,+\infty)\ra \K$ is continuous for $\mathbb{P}$-a.e. $\omega\in\Omega$. We refer to \cite{Oks03} and \cite{Wil91} for notations and basic results reguarding stochastic processes.

Let $\K_1$ and $\K_2$ be two real Banach spaces equipped with the norms $\norm{\cdot}_{\K_1}$ and $\norm{\cdot}_{\K_2}$, respectively. We denote 
by $B_b(\K_1;\K_2)$ the set of the bounded and Borel measurable functions from $\K_1$ to $\K_2$. If $\K_2=\R$, then we simply write $B_b(\K_1)$. We denote by $C_b(\K_1;\K_2)$ (${\rm BUC}(\K_1;\K_2)$, respectively) the space of bounded and continuous (uniformly continuous, respectively) functions from $\K_1$ to $\K_2$. We consider $C_b(\K_1;\K_2)$ and ${\rm BUC}(\K_1;\K_2)$ with the norm
\[
\norm{f}_{\infty}=\sup_{x\in \K_1}\|f(x)\|_{\K_2}.
\]
If $\K_2=\R$ we simply write $C_b(\K_1)$ and ${\rm BUC}(\K_1)$, respectively.
Let $k\in\N$ and let $f:\K_1\ra \K_2$ be a $k$-times Fr\'echet (Gateaux, respectively) differentiable function we denote by $\J^k f(x)$ ($\J_G^k f(x)$, respectively) its Fr\'echet (Gateaux, respectively) derivative of order $k$ at $x\in\X$.

For $k\in\N$ we denote by $\mathcal{L}^{(k)}(\K_1;\K_2)$ the space of continuous multilinear maps from $\K_1^k$ to $\K_2$, if $k=1$ we simply write $\mathcal{L}(\K_1;\K_2)$, while if $\K_1=\K_2$ we write $\mathcal{L}^{(k)}(\K_1)$.

Let $B:\Dom(B)\subseteq \K_1\ra \K_1$ be a linear operator and let $E$ be a subspace of $\K_1$. The part of $B$ in $E$, denoted by $B_E$, is defined as 
\[
B_{E}x:=Bx,\qquad x\in \Dom(B_{E}):=\set{x\in \Dom(B)\cap E\tc Bx\in E}.
\]
Let $H$ be a Hilbert space equipped with the inner product $\scal{\cdot}{\cdot}_{H}$. We say that $Q\in\mathcal{L}(H)$ is \emph{non-negative} (\emph{positive}) if for every $x\in H\setminus\set{0}$
\[
\langle Qx,x\rangle_H\geq 0\ (>0).
\]
On the other hand, $Q \in \mathcal{L}(H)$ is a \emph{non-positive} (resp. \emph{negative}) operator, if $-Q$ is non-negative (resp. positive). Let $Q\in\mathcal{L}(H)$ be a non-negative and self-adjoint operator. We say that $Q$ is a trace class operator if
\begin{align}\label{trace_defn}
{\rm Trace}[Q]:=\sum_{n=1}^{+\infty}\langle Qe_n,e_n\rangle_H<+\infty,
\end{align}
for some (and hence for all) orthonormal basis $\{e_n\}_{n\in\N}$ of $H$. We recall that the trace operator, defined in \eqref{trace_defn}, is independent of the choice of the basis. We refer to \cite{FHHMZ11} and \cite{RE-SI1} for notations and basic results about linear operators and Banach spaces. All the integrals appearing in the paper are integrals in the sense of Bochner (see \cite{DU77}).}

\section{Hypotheses and main results}\label{Notations}

We start by introducing the hypotheses we will assume throughout the paper.
\begin{hyp}\label{STR1}
We assume that the following conditions hold true.
\begin{enumerate}[\rm(i)]
\item\label{STR1.1} $R\in\mathcal{L}(\X)$ is a positive and self-adjoint operator.
\item $A$ is the infinitesimal generator of a strongly continuous semigroup $\{e^{tA}\}_{t\geq0}$ on $\X$.

\item\label{STR1.2} There exists $\eta\in (0,1)$ such that for any $T>0$
\[
\int^T_0 t^{-\eta}\,{\rm Trace}[e^{tA}R^2e^{tA^*}]dt<+\infty.
\]

\item\label{STR1.3} $G:\X\ra\X$ is a continuous, three times Fr\'echet differentiable map, with uniformly continuous derivatives, and there exists a positive constant $M$ such that for any $x\in\X$ and $h_1,h_2,h_3\in\X$
\begin{align*}
\|\J G(x)h_1\|&\leq M\norm{h_1};\\
\|\J^2 G(x)(h_1,h_2)\|&\leq M\norm{h_1}\norm{h_2};\\
\|\J^3 G(x)(h_1,h_2,h_3)\|&\leq M\norm{h_1}\norm{h_2}\norm{h_3}.
\end{align*}
\end{enumerate}
\end{hyp}
\noindent By \cite[Theorem 7.5]{DA-ZA4}, Hypotheses \ref{STR1} ensure that the mild solution $\{X(t,x)\}_{t\geq 0}$ of \eqref{eqF02} exists, is unique and has $\mathbb{P}$-a.e. continuous trajectories. 
\begin{defi}\label{DEFImild}
For any $x\in\X$ we call mild solution of \eqref{eqF02} a process $\{X(t,x)\}_{t\geq 0}$ such that for any $t>0$ and $x\in\X$
\begin{align}\label{sol_mild}
X(t,x)=e^{tA}x+\int_0^t e^{(t-s)A}RG(X(s,x))ds+\int_0^t e^{(t-s)A}RdW(s).
\end{align} 
\end{defi}
We want to point out that Hypothesis \ref{STR1}\eqref{STR1.3} is stronger than the conditions required in \cite[Theorem 7.5]{DA-ZA4}, but in order to obtain our results we will need it.

\begin{prop}\label{accar}
Assume Hypotheses \ref{STR1}\eqref{STR1.1} hold true. The space $H_R:=R(\X)$ is a separable Hilbert space if endowed with the inner product
\begin{align}\label{inn-prod-hr}
\scal{x}{y}_R:=\langle R^{-1}x,R^{-1}y\rangle,\qquad x,y\in H_R.
\end{align}
The associated norm to \eqref{inn-prod-hr} is $\norm{x}_R:=\|R^{-1}x\|$. Furthermore $H_R$ is a Borel measurable space, continuously embedded in $\X$ and, for any $h\in H_R$, it holds $\|h\|\leq \|R\|_{\mathcal{L}(\X)}\|h\|_{H_R}$.
\end{prop}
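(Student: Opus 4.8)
The plan is to verify directly that the map $R:\X\to H_R$ is an isometric isomorphism when $H_R$ carries the inner product \eqref{inn-prod-hr}, and to transport all Hilbert space structure and measurability from $\X$ across this map. First I would note that by Hypothesis \ref{STR1}\eqref{STR1.1} the operator $R$ is positive and self-adjoint, hence injective: if $Rx=0$ then $\scal{Rx}{x}=0$ forces $x=0$ by positivity. Consequently $R^{-1}:H_R\to\X$ is well defined as a (generally unbounded) linear operator on $\X$, and the formula \eqref{inn-prod-hr} makes sense for all $x,y\in H_R$. Bilinearity and symmetry of $\scal{\cdot}{\cdot}_R$ are immediate from those of $\scal{\cdot}{\cdot}$; positive definiteness follows since $\scal{x}{x}_R=\norm{R^{-1}x}^2=0$ implies $R^{-1}x=0$, i.e. $x=0$. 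Thus $(H_R,\scal{\cdot}{\cdot}_R)$ is an inner product space and the associated norm is $\norm{x}_R=\norm{R^{-1}x}$, and by construction $R:(\X,\scal{\cdot}{\cdot})\to(H_R,\scal{\cdot}{\cdot}_R)$ is a surjective linear isometry, hence a unitary isomorphism.

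Completeness of $H_R$ then follows from completeness of $\X$: if $\{h_n\}\subseteq H_R$ is $\norm{\cdot}_R$-Cauchy, then $\{R^{-1}h_n\}$ is $\norm{\cdot}$-Cauchy in $\X$, hence converges to some $x\in\X$, and then $h_n=R(R^{-1}h_n)\to Rx$ in $H_R$ with $Rx\in H_R$. Separability transfers the same way: if $\{x_k\}$ is a countable dense subset of $\X$, then $\{Rx_k\}$ is dense in $H_R$ because $R$ is an isometry. The continuous embedding $H_R\hookrightarrow\X$ and the bound $\norm{h}\leq\norm{R}_{\mathcal{L}(\X)}\norm{h}_R$ follow by writing, for $h\in H_R$, $h=R(R^{-1}h)$ and estimating $\norm{h}=\norm{R(R^{-1}h)}\leq\norm{R}_{\mathcal{L}(\X)}\norm{R^{-1}h}=\norm{R}_{\mathcal{L}(\X)}\norm{h}_R$.

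For the Borel measurability of $H_R$ as a subset of $\X$ I would invoke the fact that $H_R=R(\X)$ is the continuous injective image of the Polish space $\X$ under the Borel map $R$; by Lusin–Souslin's theorem (the injective image of a Borel set under a Borel-measurable injection between Polish spaces is Borel) the set $H_R$ is Borel in $\X$. Equivalently, since $H_R$ is a separable Banach space continuously embedded in $\X$, one may cite the standard measurable-embedding result (e.g. \cite[Chapter 1]{DA04} or a Kuratowski-type argument) that such a subspace is automatically a Borel subset. I expect the only genuinely delicate point to be precisely this measurability claim: unlike the algebraic and metric assertions, which are formal consequences of $R$ being an isometric isomorphism, the Borel character of $R(\X)$ in $\X$ requires a descriptive set theory input rather than a direct computation, and it is worth being explicit about which theorem is being used.
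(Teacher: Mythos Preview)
Your proof is correct. The paper does not actually prove this proposition; it simply refers to \cite[Theorem 15.1]{KE1} and \cite[Appendix C]{LI-RO1} for the argument and basic properties of $H_R$. Your direct verification---transporting the Hilbert structure along the isometry $R:\X\to H_R$ and invoking Lusin--Souslin for Borel measurability of $R(\X)$---is exactly what lies behind those references (in particular, \cite[Theorem 15.1]{KE1} \emph{is} the Lusin--Souslin theorem), so your proposal is fully aligned with the paper's intended route.
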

\noindent We refer to \cite[Theorem 15.1]{KE1} and \cite[Appendix C]{LI-RO1}, for the proof of Proposition \ref{accar} and some basic properties of the space $H_R$. We now state some additional hypotheses we will use throughout the paper.

\begin{hyp}\label{STR2}
Assume that Hypotheses \ref{STR1} hold true, that the part of $A$ in $H_R$, denoted by $A_R$, generates a strongly continuous semigroup on $H_R$ and that there exists $\zeta_R\in\R$ such that, for every $x\in\X$ and $h\in \Dom(A_R)$ 
\begin{equation}\label{dissR}
\scal{[A_R+R\J G(x)]h}{h}_R\leq \zeta_R\norm{h}^2_{R}.
\end{equation} 
\end{hyp}
\noindent It is easy to see that if there exists $w_R \in\R$ such that, for any $x\in\Dom(A_R)$, it holds
\[
\langle A_Rx,x\rangle_R\leq w_R\norm{x}_R^2,
\]
then \eqref{dissR} is verified with $\zeta_R=w_R+\|R\|_{\mathcal{L}(\X)}M$, where $M$ is the constant introduced in Hypothesis \ref{STR1}\eqref{STR1.3}. We stress that in \cite{CDP96-2,CL19,DA5} and \cite[Section 6.4.1]{DPZ02}, $\zeta_R$ is negative, while in this paper we do not assume any sign on $\zeta_R$. 

\begin{rmk}
We want to point out that in general the stochastic convolution
\[\left\{\int_0^t e^{(t-s)A}RdW(s)\right\}_{t\geq 0},\]
is a $H_R$-valued continuous process. Hence it is not possible to solve \eqref{eqF02} in $H_R$.
\end{rmk}

Now we define the functional spaces which will play a main role in this paper. The following notion of differentiability first appeared in \cite{GRO1} and \cite{KUO1}.

\begin{defi}\label{intr_defn_diff} 
Assume Hypothesis \ref{STR1}\eqref{STR1.1} holds true. We say that a function $\Phi:\X\ra \R$ is $H_R$-differentiable at $x\in\X$, if there exists  $L_x\in\mathcal{L}(H_R;\R)$ such that 
\begin{align*}
\lim_{\norm{h}_R\ra 0}\frac{|\Phi(x+h)-\Phi(x)-L_xh|}{\norm{h}_R}=0.
\end{align*}
If it exists, $L_x$ is unique and we set $\J_R\Phi(x):=L_x$.  We say that $\Phi$ is twice $H_R$-differentiable at $x\in\X$ if the map $\J_{R} \Phi:\X\ra \mathcal{L}(H_R;\R)$ is $H_R$-differentiable at the point $x\in \X$, namely there exists a unique  $B_x\in\mathcal{L}(H_R;\mathcal{L}(H_R;\R))$ such that 
\begin{align*}
\lim_{\norm{h}_R\ra 0}\frac{1}{\norm{h}_R}\|\J_R \Phi(x+h)-\J_R\Phi(x)-B_xh\|_{\mathcal{L}(H_R;\R)}=0.
\end{align*}
We call second order $H_R$-derivative of $\Phi$ at the point $x\in \X$ the unique symmetric continuous bilinear form $\J^2 \Phi(x):H_R\times H_R\ra \R$ defined by
\[
\J^2_{R} \Phi(x)(h,k):=(B_xh)k,\qquad x\in\X,\ h,k\in H_R.
\]
For any $k> 2$, we define in the same way a $k$-times $H_R$-differentiable functions $\Phi$ and we denote by $\D^k_R\Phi(x)$ its $H_R$-derivative of order $k$. Observe that $\J^k_{R}\Phi(x)$ beglongs to $\mathcal{L}^{(k)}(H_R;\R)$. Since for $R={\rm Id}_{\X}$, the $H_R$ differentiability coincide with the standard Fr\'echet differentiability, in this case we will drop the subscript $R$.
\end{defi}

\begin{rmk}
Let $f:\X\ra\R$ be a $H_R$-differentiable function. Since $H_R$ is a Hilbert space, by the Riesz representation theorem, for any $x\in\X$ there exists a unique $l_x\in H_R$ such that
\[
\J_R f(x)h=\langle l_x,h\rangle_R,\qquad h\in H_R.
\]
We call $l_x$ the $H_R$-gradient of $f$ at $x\in\X$ and we denote it by $\nabla_R f(x)$.
\end{rmk}

\noindent This notion of $H_R$-differentiability is a sort of Fr\'echet differentiability along the directions of $H_R$ and it was already considered in various papers (see, for example, \cite{ABF21,BF20,BF22,CL19,CL21}). The proof of the following result follows the same arguments used in the proof of \cite[Proposition 17]{BF20}.

\begin{prop}\label{dalpha}
Assume Hypothesis \ref{STR1}\eqref{STR1.1} holds true. If $f:\X\ra\R$ is a Fr\'echet differentiable function with continuous derivative operator, then $f$ is also $H_R$-differentiable with continuous $H_R$-derivative operator and, for every $x\in\X$, it holds $\nabla_Rf(x)=R^2\nabla f(x)$. 
\end{prop}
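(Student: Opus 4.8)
The plan is to unwind the definition of the $H_R$-derivative and compare it directly with the Fréchet derivative, using the two basic features of $H_R$ recorded in Proposition \ref{accar}: the continuous embedding $H_R\hookrightarrow\X$ with $\|h\|\le\|R\|_{\mathcal L(\X)}\|h\|_R$, and the identity $\|h\|_R=\|R^{-1}h\|$. First I would fix $x\in\X$ and consider the candidate operator $L_x\colon H_R\to\R$ defined by $L_x h:=\J f(x)h$ for $h\in H_R$; since $\J f(x)\in\mathcal L(\X;\R)$ and the embedding is continuous, $L_x$ is indeed a bounded linear functional on $H_R$, with $\|L_x\|_{\mathcal L(H_R;\R)}\le\|R\|_{\mathcal L(\X)}\|\J f(x)\|_{\mathcal L(\X;\R)}$.

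Next I would verify the limit in Definition \ref{intr_defn_diff}. For $h\in H_R\setminus\{0\}$ write
\begin{align*}
\frac{|f(x+h)-f(x)-L_x h|}{\|h\|_R}
=\frac{|f(x+h)-f(x)-\J f(x)h|}{\|h\|}\cdot\frac{\|h\|}{\|h\|_R}.
\end{align*}
The second factor is bounded by $\|R\|_{\mathcal L(\X)}$ uniformly in $h$, and as $\|h\|_R\to0$ we also have $\|h\|\to0$ by the embedding, so the first factor tends to $0$ by Fréchet differentiability of $f$ at $x$. Hence the product tends to $0$, which shows $f$ is $H_R$-differentiable at $x$ with $\J_R f(x)=L_x$, i.e.\ $\J_R f(x)h=\J f(x)h$ for all $h\in H_R$. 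Uniqueness of $\J_R f(x)$ is already part of Definition \ref{intr_defn_diff}.

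Then I would compute the $H_R$-gradient. By the Riesz representation theorem in $H_R$, $\nabla_R f(x)$ is the unique element of $H_R$ with $\langle\nabla_R f(x),h\rangle_R=\J_R f(x)h=\J f(x)h$ for all $h\in H_R$. On the other hand, writing $\J f(x)h=\langle\nabla f(x),h\rangle$ and using $\langle u,h\rangle_R=\langle R^{-1}u,R^{-1}h\rangle$, one checks that $u=R^2\nabla f(x)$ satisfies $u\in H_R=R(\X)$ (as $R$ is bounded and self-adjoint, $R^2\nabla f(x)=R(R\nabla f(x))\in R(\X)$) and $\langle u,h\rangle_R=\langle R^{-1}R^2\nabla f(x),R^{-1}h\rangle=\langle R\nabla f(x),R^{-1}h\rangle=\langle\nabla f(x),h\rangle=\J f(x)h$, the last step by self-adjointness of $R$ and $h=R(R^{-1}h)$. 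By uniqueness of the Riesz representative, $\nabla_R f(x)=R^2\nabla f(x)$.

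Finally, for continuity of the $H_R$-derivative operator $x\mapsto\J_R f(x)\in\mathcal L(H_R;\R)$, I would use the estimate
\begin{align*}
\|\J_R f(x)-\J_R f(y)\|_{\mathcal L(H_R;\R)}
=\sup_{\|h\|_R\le 1}|(\J f(x)-\J f(y))h|
\le\|R\|_{\mathcal L(\X)}\,\|\J f(x)-\J f(y)\|_{\mathcal L(\X;\R)},
\end{align*}
again by the embedding $\|h\|\le\|R\|_{\mathcal L(\X)}\|h\|_R$, so continuity of $\J f$ transfers to continuity of $\J_R f$. There is no real obstacle here; the only point requiring a little care is checking that the Riesz representative of $\J_R f(x)$ in $H_R$ really is $R^2\nabla f(x)$ and in particular that this vector lies in $H_R$, which is where self-adjointness of $R$ (Hypothesis \ref{STR1}\eqref{STR1.1}) is used.
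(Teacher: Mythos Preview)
Your argument is correct and is exactly the natural direct verification: restrict the Fr\'echet derivative to $H_R$ via the continuous embedding, check the limit by the factorisation $\|h\|/\|h\|_R\le\|R\|_{\mathcal L(\X)}$, identify the Riesz representative in $H_R$ as $R^2\nabla f(x)$ using self-adjointness of $R$, and transfer continuity of $\J f$ to $\J_R f$ through the same embedding bound. The paper does not supply its own proof here but only cites \cite[Proposition 17]{BF20}; your write-up is precisely the kind of computation that reference points to, so there is nothing to compare.
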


Let $Y$ be a Banach space endowed with the norm $\norm{\cdot}_Y$ and $\alpha\in(0,1)$. We define the space of $Y$-valued, bounded and uniformly continuous H\"older functions along the directions of $H_R$ as
\begin{align}\label{defn_Calpha}
{\rm BUC}_R^\alpha(\X;Y):=\set{f\in{\rm BUC}(\X)\tc [f]_{{\rm BUC}_R^\alpha(\X;Y)}:=\sup_{\substack{x\in\X\\ h\in H_R\setminus\set{0}}}\frac{\|f(x+h)-f(x)\|_Y}{\|h\|^\alpha_{R}}<+\infty},
\end{align}
endowed with the norm
\begin{align*}
\|f\|_{{\rm BUC}_R^\alpha(\X;Y)}:=\norm{f}_\infty+[f]_{{\rm BUC}_R^\alpha(\X;Y)}.
\end{align*}
As usual, if $Y=\R$ we simply write ${\rm BUC}_R^\alpha(\X)$.

For $k\in\N$, we denote by ${\rm BUC}^{k}_{R}(\X)$ the space of bounded, uniformly continuous and $k$-times $H_R$-differentiable functions $f:\X\ra\R$ such that $\D^i_Rf\in {\rm BUC}(\X;\mathcal{L}^{(i)}(H_R;\R))$. We endow ${\rm BUC}^{k}_{R}(\X)$  with the norm
\[
\norm{f}_{{\rm BUC}^{k}_{R}(\X)}:=\norm{f}_{\infty}+\sum^k_{i=1}\sup_{x\in\X}\|\J^i_Rf(x)\|_{\mathcal{L}^{(i)}(H_R;\R)}.
\]
For $k\in\N$ and $\alpha\in (0,1)$, we denote by ${\rm BUC}^{k+\alpha}_{R}(\X)$ the subspace of ${\rm BUC}^{k}_{R}(\X)$ of functions $f:\X\ra\R$ such that $\D^k_Rf\in {\rm BUC}^{\alpha}_{R}(\X;\mathcal{L}^{(k)}(H_R;\R))$. We endow ${\rm BUC}^{k+\alpha}_{R}(\X)$ with the norm
\[
\norm{f}_{{\rm BUC}^{k+\alpha}_{R}(\X)}:= \norm{f}_{{\rm BUC}^{k}_{R}(\X)}+[D_R^kf]_{{\rm BUC}^{\alpha}_{R}(\X;\mathcal{L}^{(k)}(H_R;\R))},
\]
where the seminorm $[\cdot]_{{\rm BUC}^{\alpha}_{R}(\X;\mathcal{L}^{(k)}(H_R;\R))}$ is defined in \eqref{defn_Calpha}. If $R={\rm Id}_{\X}$, then we will simply omit the subscript $R$ from the notations.

Now we can state one of the main result of this paper. 

\begin{thm}\label{THM_SCHAUDER}
Assume Hypotheses \ref{STR2} hold true. For any $\lambda>0$, $\alpha\in(0,1)$ and $f\in{\rm BUC}^\alpha_R(\X)$ the solution $u$ of \eqref{intr_problema_stazionario}, introduced in \eqref{GS}, belongs to ${\rm BUC}^{2+\alpha}_R(\X)$, and there exists a positive constant $C$ depending only on $\lambda$ and $\alpha$ such that
\begin{align}\label{THM_SCHAUDER_STIMA}
\|u\|_{{\rm BUC}^{2+\alpha}_R(\X)}\leq C\norm{f}_{{\rm BUC}^\alpha_R(\X)}.
\end{align}
\end{thm}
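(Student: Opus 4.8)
The strategy is the classical semigroup approach to Schauder estimates, carried out in the $H_R$-differentiable scale. The starting point is the representation $u = R(\lambda,N)f = \int_0^{+\infty} e^{-\lambda s} P(s) f \, ds$. The estimate \eqref{THM_SCHAUDER_STIMA} will follow by differentiating under the integral sign and using sharp smoothing estimates for the derivatives of $P(s)f$ along the directions of $H_R$. Concretely, I expect that Propositions \ref{boss}, \ref{bossD} and \ref{Holder} (the derivative estimates for $\{P(t)\}_{t\geq 0}$ coming from the Bismut--Elworthy--Li formula of Theorem \ref{BEL}) give bounds of the schematic form
\[
\|\J_R P(s) f\|_\infty \lesssim e^{c s}\, s^{-\gamma_1}\, \|f\|_\infty, \qquad
\|\J^2_R P(s) f\|_\infty \lesssim e^{c s}\, s^{-\gamma_2}\, \|f\|_\infty
\]
for $f\in{\rm BUC}(\X)$, while for $f\in{\rm BUC}^\alpha_R(\X)$ one gains $\alpha/2$ powers of $s$, namely
\[
\|\J^2_R P(s) f\|_\infty \lesssim e^{c s}\, s^{-(1-\alpha/2)}\, [f]_{{\rm BUC}^\alpha_R(\X)}.
\]
Since $s^{-(1-\alpha/2)}$ is integrable near $0$ against $e^{-\lambda s}$ (choosing, if necessary, $\lambda$ large enough first and then removing this restriction by a standard rescaling/resolvent identity argument, or simply absorbing $e^{cs}$ since the singularity at $0$ is what matters), this already shows $u$ is bounded, $H_R$-differentiable and twice $H_R$-differentiable with bounded derivatives, i.e. $u\in{\rm BUC}^2_R(\X)$, with the corresponding norm bound.

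\textbf{Getting the H\"older seminorm of $\J^2_R u$.} This is the genuine Schauder step and where the interpolation theorem (Theorem \ref{intDS}) enters. The point is that the bound above only yields $\J^2_R u\in{\rm BUC}(\X;\mathcal{L}^{(2)}(H_R;\R))$, not H\"older continuity of it. The plan is to split the time integral at a free parameter $t>0$:
\[
\J^2_R u(x) = \int_0^{t} e^{-\lambda s}\,\J^2_R P(s) f(x)\, ds + \int_{t}^{+\infty} e^{-\lambda s}\,\J^2_R P(s) f(x)\, ds =: I_1(t,x) + I_2(t,x).
\]
For $I_1$ one uses the $s^{-(1-\alpha/2)}$ bound to get $\|I_1(t,\cdot)\|_\infty \lesssim t^{\alpha/2}[f]_{{\rm BUC}^\alpha_R}$. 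For $I_2$ one differentiates once more, using a third-order estimate $\|\J^3_R P(s) f\|_\infty \lesssim e^{cs} s^{-(3/2-\alpha/2)}[f]_{{\rm BUC}^\alpha_R}$, to obtain $\|\J_R[\J^2_R u] \text{ restricted to } I_2\|_\infty \lesssim t^{-1/2+\alpha/2}[f]_{{\rm BUC}^\alpha_R}$; then for $x,y\in\X$ with $h=y-x\in H_R$ one estimates $\|I_2(t,x)-I_2(t,y)\| \lesssim t^{-1/2+\alpha/2}\|h\|_R [f]_{{\rm BUC}^\alpha_R}$. Adding the two contributions and optimizing over $t$ — choosing $t = \|h\|_R$ — gives
\[
\|\J^2_R u(x) - \J^2_R u(y)\| \lesssim \|h\|_R^{\alpha}\, [f]_{{\rm BUC}^\alpha_R(\X)},
\]
which is exactly $\J^2_R u\in{\rm BUC}^\alpha_R(\X;\mathcal{L}^{(2)}(H_R;\R))$, completing $u\in{\rm BUC}^{2+\alpha}_R(\X)$ with the claimed bound. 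Alternatively, and perhaps more in the spirit of \cite{CDP96-2}, one uses the interpolation theorem of Section \ref{Interpolation}: ${\rm BUC}^{2+\alpha}_R$ is an interpolation space between ${\rm BUC}^2_R$ and ${\rm BUC}^3_R$ (or between ${\rm BUC}^\alpha_R$ and ${\rm BUC}^{2+\alpha}_R$ in the right configuration), and the smoothing estimates say precisely that $N$, or rather the map $f\mapsto u$, lands in the correct interpolation scale; I would phrase the argument via Theorem \ref{intDS} to keep it uniform with the rest of the paper.

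\textbf{Main obstacle.} The analytic heart — and the step most likely to require care — is establishing that $I_1$ and $I_2$ combine correctly, i.e.\ that the second-order smoothing estimate really produces the $s^{-(1-\alpha/2)}$ singularity (and not something worse) when acting on ${\rm BUC}^\alpha_R$ rather than on ${\rm BUC}^1_R$. This is not automatic: it requires interpolating the estimate $\|\J^2_R P(s)f\|_\infty\lesssim s^{-1/2}\|\J_R f\|_\infty$ (or $\|\J^2_R P(s) f\|_\infty \lesssim s^{-1}\|f\|_\infty$) with the trivial bound, and the interpolation has to be performed at the level of the $H_R$-H\"older spaces, which is exactly what Theorem \ref{intDS} is designed to supply. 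A secondary technical point is that without a sign assumption on $\zeta_R$ the semigroup estimates carry an $e^{cs}$ factor, so one cannot integrate to $+\infty$ for small $\lambda$ directly; this is handled by first proving the estimate for $\lambda$ large and then extending to all $\lambda>0$ using the resolvent identity $R(\lambda,N) = R(\mu,N) + (\mu-\lambda)R(\lambda,N)R(\mu,N)$ together with the already-established mapping property of $R(\mu,N)$, or equivalently by noting that for the local regularity only the behaviour of the integrand near $s=0$ matters and the far part is harmless after absorbing it into $C$.
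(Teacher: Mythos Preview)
Your proposal is correct and matches the paper's approach closely: represent $u$ via the Laplace integral, differentiate under the integral using the smoothing estimates of Proposition~\ref{Holder}, and obtain the H\"older seminorm of $\J^2_R u$ by the time-splitting argument. Two small corrections. First, the optimal splitting point is $t=\|h\|_R^2$, not $t=\|h\|_R$: with your two bounds $t^{\alpha/2}$ and $t^{(\alpha-1)/2}\|h\|_R$, balancing forces $t^{1/2}=\|h\|_R$, and this is exactly what the paper does (it sets $a(x)=\int_0^{\|h\|_R^2}$ and $b(x)=\int_{\|h\|_R^2}^\infty$). Second, the interpolation Theorem~\ref{intDS} is not invoked at the Schauder step itself but upstream: combined with Theorem~\ref{classic}, it turns the ${\rm BUC}$- and $\mathfrak{X}$-estimates of Propositions~\ref{boss} and~\ref{bossD} into the ${\rm BUC}^\alpha_R$-estimate of Proposition~\ref{Holder}, and the proof of Theorem~\ref{THM_SCHAUDER} then uses only Proposition~\ref{Holder} plus the splitting. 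Your handling of small $\lambda$ is fine; the paper's variant is to rewrite the equation as $(4|\zeta_R|+1)u-Nu=f-(\lambda-4|\zeta_R|-1)u$ and observe, using the already-proved $u\in{\rm BUC}^1_R(\X)$, that the right-hand side lies in ${\rm BUC}^\alpha_R(\X)$.
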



Theorem \ref{THM_SCHAUDER} says nothing about the case $f\in {\rm BUC}(\X)$, i.e. $\alpha=0$. To study this case we will introduce an appropriate Zygmund space. Indeed the second main result we will show (Theorem \ref{Thm_Zyg}) states that if Hypotheses \ref{STR2} hold true and $f$ belongs to ${\rm BUC}(\X)$, then the solution $u$ of \eqref{intr_problema_stazionario} is bounded, uniformly continuous and $H_R$-differentiable with bounded and uniformly continuous $H_R$-gradient $\J_Ru$ such that
\begin{align*}
\sup_{\substack{x\in\X\\ h\in H_R\setminus\set{0}}}\frac{\|\D_Ru(x+2h)-2\D_Ru(x+h)+\D_Ru(x)\|_{R}}{\|h\|_R}<+\infty,
\end{align*}
namely $\J_Ru$ satisfies a Zygmund type condition along $H_R$. 

Moreover we show Schauder type regularity results (Theorem \ref{Thm_evol}) for the mild solution $v$, introduced in \eqref{mild_evol}, of the evolution equation \eqref{evol_prob}. Finally in Proposition \ref{SCHvar} we will prove a Schauder regularity results for the solution $u$ of the stationary equation
\begin{equation*}
\lambda u-L u-\langle F,\J_Ru\rangle_R=f,\qquad \lambda>0,\ f\in {\rm BUC}_R^{\alpha}(\X),
\end{equation*}
where $F$ is a function belonginig to ${\rm BUC}^\alpha_R(\X;H_R)$ with suitable small norm, and  $L$ is the weak generator in ${\rm BUC}(\X)$ of the transition semigroup associated to 
\begin{gather*}
\eqsys{
dX(t,x)=AX(t,x)dt+RdW(t), & t>0;\\
X(0,x)=x\in \X,
}
\end{gather*} 
where $A$ and $R$ satisfy Hypotheses \ref{STR2}.

\begin{rmk}
In \cite{CL19} the authors study the case with $A=-(1/2){\rm Id}_{\X}$, $G\equiv 0$ and $R=Q^{1/2}$, where $Q$ is a non-negative, self-adjoint and trace class operator, hence $R=Q^{1/2}$ is not necessary injective. Without the injectivity of $R$ we are not aware of a result similar to Proposition \ref{dalpha} and this may give problems in the proof of Theorem \ref{BEL}, particularly in the step from \eqref{raaaa} to \eqref{raaaa2}. Instead in  \cite{CL19} they can exploit the Mehler representation formula of the semigroup to avoid this problem. In \cite{CL19}, again exploiting the Mehler representation formula of the semigroup, they obtain estimates similar to the ones in Proposition \ref{Holder}, without the use of Theorem \ref{intDS}. Doing so they are able to prove (see \cite[Theorem 3.4 and Theorem 3.6]{CL19}) results similar to Theorem \ref{THM_SCHAUDER} replacing the ${\rm BUC}(\X)$ space with $C_b(\X)$. Unfortunately, we are not able to prove the estimates in Proposition \ref{Holder} without the results of Section \ref{Interpolation}. Furthermore, we do not know whether Theorem \ref{intDS} holds true if we replace ${\rm BUC}(\X)$ by $C_b(\X)$.
\end{rmk}

\section{An interpolation result}\label{Interpolation}
In the following we will prove an interpolation result which will be fundamental in the next sections. A similar result appears in \cite{CDP96-2}, but we stress that the derivative operator considered in \cite{CDP96-2} is different from the one introduced in this paper (Definition \ref{intr_defn_diff}). Let $\mathfrak{X}$ be the set of functions $\varphi$ belonging to ${\rm BUC}(\X)$ such that they are $H_R$-differentiable and 
\begin{align*}
\|\D_R\varphi\|_\infty:=\sup_{x\in\X}\|\D_R\varphi(x)\|_{\mathcal{L}(H_R;\R)}<+\infty.
\end{align*}
We endowed $\mathfrak{X}$ with the norm
\(
\|\varphi\|_{\mathfrak{X}}:=\|\varphi\|_\infty+\|\D_R\varphi\|_\infty.
\)

\begin{prop}
Assume Hypothesis \ref{STR1}\eqref{STR1.1} holds true. $(\mathfrak{X},\norm{\cdot}_{\mathfrak{X}})$ is a Banach space.
\end{prop}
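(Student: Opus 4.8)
The plan is to verify directly that $(\mathfrak{X},\norm{\cdot}_{\mathfrak{X}})$ is complete, since it is routine that $\norm{\cdot}_{\mathfrak{X}}$ is a norm on the vector space $\mathfrak{X}$. So I would start with a Cauchy sequence $\{\varphi_n\}_{n\in\N}$ in $\mathfrak{X}$ and produce a candidate limit. Since $\norm{\varphi_n-\varphi_m}_\infty\le\norm{\varphi_n-\varphi_m}_{\mathfrak X}$ and $\norm{\D_R\varphi_n-\D_R\varphi_m}_\infty\le\norm{\varphi_n-\varphi_m}_{\mathfrak X}$, the sequence $\{\varphi_n\}$ is Cauchy in the Banach space ${\rm BUC}(\X)$ and $\{\D_R\varphi_n\}$ is Cauchy in the Banach space ${\rm BUC}(\X;\mathcal{L}(H_R;\R))$ (here I use that $\mathcal{L}(H_R;\R)$ is a Banach space, so ${\rm BUC}(\X;\mathcal{L}(H_R;\R))$ with the sup-norm is complete — a standard fact). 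Hence there exist $\varphi\in{\rm BUC}(\X)$ and $\Psi\in{\rm BUC}(\X;\mathcal{L}(H_R;\R))$ with $\varphi_n\to\varphi$ and $\D_R\varphi_n\to\Psi$, both uniformly on $\X$.

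The heart of the proof is to show that $\varphi$ is $H_R$-differentiable with $\D_R\varphi=\Psi$; once this is done, $\varphi\in\mathfrak X$ (since $\Psi$ is bounded and uniformly continuous) and $\norm{\varphi_n-\varphi}_{\mathfrak X}=\norm{\varphi_n-\varphi}_\infty+\norm{\D_R\varphi_n-\Psi}_\infty\to0$, which gives completeness. To identify the derivative I would fix $x\in\X$ and $h\in H_R$ and write the finite-increment identity along the $H_R$-direction: for each $n$,
\[
\varphi_n(x+h)-\varphi_n(x)=\int_0^1 \D_R\varphi_n(x+th)h\,dt,
\]
which is valid because $t\mapsto\varphi_n(x+th)$ is $C^1$ on $[0,1]$ with derivative $\D_R\varphi_n(x+th)h$ (the chain rule along a fixed $H_R$-direction). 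Passing to the limit $n\to\infty$: the left side converges to $\varphi(x+h)-\varphi(x)$ by uniform (even pointwise) convergence, and the right side converges to $\int_0^1\Psi(x+th)h\,dt$ because $\D_R\varphi_n\to\Psi$ uniformly and $\norm{h}_R$ is fixed, so $\abs{\D_R\varphi_n(x+th)h-\Psi(x+th)h}\le\norm{\D_R\varphi_n-\Psi}_\infty\norm{h}_R\to0$ uniformly in $t$. Thus
\[
\varphi(x+h)-\varphi(x)=\int_0^1\Psi(x+th)h\,dt\qquad\text{for all }x\in\X,\ h\in H_R.
\]
From this, $\abs{\varphi(x+h)-\varphi(x)-\Psi(x)h}=\abs{\int_0^1(\Psi(x+th)-\Psi(x))h\,dt}\le\norm{h}_R\sup_{t\in[0,1]}\norm{\Psi(x+th)-\Psi(x)}_{\mathcal{L}(H_R;\R)}$, and the continuity of $\Psi$ at $x$ makes the supremum go to $0$ as $\norm{h}_R\to0$. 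Hence $\varphi$ is $H_R$-differentiable at $x$ with $\D_R\varphi(x)=\Psi(x)$, by the uniqueness statement in Definition \ref{intr_defn_diff}.

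The main obstacle, and the only point requiring care, is the justification of the finite-increment identity for $\varphi_n$ along $H_R$-directions — that is, that $H_R$-differentiability (which is a priori only a Fréchet-type condition along the subspace $H_R$) implies that $t\mapsto\varphi_n(x+th)$ is differentiable on $[0,1]$ with the expected derivative and that the fundamental theorem of calculus applies. This follows since, for fixed $x$ and $h\in H_R$, the difference quotient $\frac{\varphi_n(x+(t+s)h)-\varphi_n(x+th)}{s}$ equals $\frac{\varphi_n((x+th)+sh)-\varphi_n(x+th)}{s}$, which tends to $\D_R\varphi_n(x+th)h$ as $s\to0$ by the definition of the $H_R$-derivative applied at the point $x+th\in\X$ with increment $sh\in H_R$ (note $\norm{sh}_R=\abs{s}\norm{h}_R\to0$); moreover $t\mapsto\D_R\varphi_n(x+th)h$ is continuous because $\D_R\varphi_n\in{\rm BUC}(\X;\mathcal L(H_R;\R))$, so the derivative is continuous and the integral representation holds. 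Everything else is a routine assembly of standard completeness arguments.
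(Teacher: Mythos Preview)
Your argument has a gap: the definition of $\mathfrak{X}$ requires only that $\varphi\in{\rm BUC}(\X)$ be $H_R$-differentiable with $\sup_{x}\|\D_R\varphi(x)\|_{\mathcal{L}(H_R;\R)}<\infty$; it does \emph{not} require $\D_R\varphi$ to be continuous, let alone uniformly continuous. So when you assert that $\{\D_R\varphi_n\}$ is Cauchy in ${\rm BUC}(\X;\mathcal{L}(H_R;\R))$, that $t\mapsto\D_R\varphi_n(x+th)h$ is continuous ``because $\D_R\varphi_n\in{\rm BUC}(\X;\mathcal L(H_R;\R))$'', and that $\Psi$ is continuous at $x$, none of these is justified. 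In particular, your finite-increment identity $\varphi_n(x+h)-\varphi_n(x)=\int_0^1\D_R\varphi_n(x+th)h\,dt$ needs at least Riemann integrability of the integrand, and the final step $\sup_{t\in[0,1]}\|\Psi(x+th)-\Psi(x)\|_{\mathcal L(H_R;\R)}\to 0$ needs continuity of $\Psi$. Already in one dimension a function in ${\rm BUC}(\R)$ can be everywhere differentiable with bounded but discontinuous derivative (a suitable cutoff of $x^2\sin(1/x)$), so this is a genuine obstacle, not a technicality.

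The repair is to replace the integral representation and the continuity of $\Psi$ by the mean value \emph{inequality}, which needs only everywhere differentiability: since $\varphi_n-\varphi_m$ is $H_R$-differentiable, the map $t\mapsto(\varphi_n-\varphi_m)(x+th)$ is everywhere differentiable on $[0,1]$, so
\[
\bigl|(\varphi_n-\varphi_m)(x+h)-(\varphi_n-\varphi_m)(x)\bigr|\le\|\D_R\varphi_n-\D_R\varphi_m\|_\infty\,\|h\|_R.
\]
Hence the functions $g_n(h):=\|h\|_R^{-1}\bigl|\varphi_n(x+h)-\varphi_n(x)-\D_R\varphi_n(x)h\bigr|$ satisfy $|g_n(h)-g_m(h)|\le 2\|\D_R\varphi_n-\D_R\varphi_m\|_\infty$ uniformly in $h\in H_R\setminus\{0\}$, and they converge pointwise to $g(h):=\|h\|_R^{-1}\bigl|\varphi(x+h)-\varphi(x)-L(x)h\bigr|$. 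Since $g_n\to g$ uniformly and $\lim_{\|h\|_R\to 0}g_n(h)=0$ for each $n$, the limits may be interchanged (this is exactly the paper's invocation of \cite[Theorem 7.11]{Rud76}), giving $\lim_{\|h\|_R\to 0}g(h)=0$, i.e.\ $\D_R\varphi(x)=L(x)$.
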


\begin{proof}
The proof is standard, we just give a brief sketch. The fact that $\mathfrak{X}$ is a normed space is obvious. Now let $(\varphi_n)_{n\in\N}\subseteq \mathfrak{X}$ be a Cauchy sequence in $\mathfrak{X}$. Let $\varphi\in{\rm BUC}(\X)$ and $L:\X\ra \mathcal{L}(H_R;\R)$ be the uniform limit of $(\varphi_n)_{n\in\N}$ and of $(\D_R\varphi_n)_{n\in\N}$. By \cite[Theorem 7.11]{Rud76} we get that 
\begin{align*}
\lim_{\|h\|_R\ra 0}\frac{|\varphi(x+h)-\varphi(x)-L(x)h|}{\|h\|_R} &=\lim_{\|h\|_R\ra 0}\lim_{n\ra+\infty}\frac{|\varphi_n(x+h)-\varphi_n(x)-\D_R\varphi_n(x)h|}{\|h\|_R}\\
&=\lim_{n\ra+\infty}\lim_{\|h\|_R\ra 0}\frac{|\varphi_n(x+h)-\varphi_n(x)-\D_R\varphi_n(x)h|}{\|h\|_R}=0.
\end{align*}
This conclude the proof.
\end{proof}

We introduce a modification of the Lasry--Lions approximating procedure introduced in \cite{LL86}.

\begin{prop}\label{Lex}
Assume Hypothesis \ref{STR1}\eqref{STR1.1} holds true and let $f\in {\rm BUC}^\alpha_R(\X)$ for some $\alpha\in(0,1)$. For every $\eps>0$, we consider the function
\begin{align}\label{LL_definition}
f_\eps(x):=\sup_{h\in H_R}\set{\inf_{k\in H_R}\set{f(x+h-k)+\frac{1}{2\eps}\|k\|_R^2}-\frac{1}{\eps}\|h\|_R^2},\qquad x\in\X.
\end{align}
For every $\eps>0$, the function $f_\eps$ belongs to $\mathfrak{X}$ and for any $x\in\X$
\begin{align}
\|f_\eps\|_\infty&\leq\|f\|_\infty.\label{LL_limitatezza}
\end{align}
Furthermore there exists a positive constant $c_\alpha$, depending only on $\alpha$, such that
\begin{align}
0\leq f(x)-f_\eps(x)&\leq c_\alpha [f]_{{\rm BUC}_R^\alpha(\X)}^{2/(2-\alpha)}\eps^{\alpha/(2-\alpha)},\qquad x\in\X,\label{LL_approximation}
\end{align}
and
\begin{align}
\|\J_Rf_\eps\|_\infty &\leq 2\sqrt{2}c^{1/2}_\alpha [f]_{{\rm BUC}_R^\alpha(\X)}^{1/(2-\alpha)}\eps^{(\alpha-1)/(2-\alpha)}.\label{LL_derivative_estimate}
\end{align}
\end{prop}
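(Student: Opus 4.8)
The plan is to analyze the Lasry--Lions style double-envelope in \eqref{LL_definition} by first rewriting it in terms of the inf- and sup-convolutions on the Hilbert space $H_R$, and then exploiting the fact that $f$ is H\"older along $H_R$. Define, for $\eps>0$ and $x\in\X$, the inf-convolution $g_\eps(x):=\inf_{k\in H_R}\{f(x-k)+\frac{1}{2\eps}\|k\|_R^2\}$ (Moreau--Yosida regularization of $f$ along $H_R$) and then $f_\eps(x)=\sup_{h\in H_R}\{g_\eps(x+h)-\frac{1}{\eps}\|h\|_R^2\}$. Since $f$ is bounded, $g_\eps$ is bounded with the same sup-bound and, being an infimum of the family of $1/\eps$-Lipschitz (along $H_R$) functions $k\mapsto f(\cdot-k)+\frac1{2\eps}\|k\|_R^2$ shifted appropriately, $g_\eps$ is itself Lipschitz along $H_R$; the same reasoning applied to the outer sup shows $f_\eps$ is bounded and Lipschitz along $H_R$, which gives \eqref{LL_limitatezza} immediately and is the first half of membership in $\mathfrak X$.

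Next I would establish the approximation estimate \eqref{LL_approximation}. The lower bound $f(x)-f_\eps(x)\ge 0$ is the standard fact that a sup-of-inf-convolution lies below $f$: take $k=h$ in the inner infimum to get $\inf_k\{\cdots\}\le f(x+h)$ hence $g_\eps(x+h)-\frac1\eps\|h\|_R^2\le f(x+h)-\frac1\eps\|h\|_R^2\le$ ... — more directly, choosing $h=0$ and $k=0$ shows $f_\eps(x)\ge f(x)-$ nothing, so one argues via $f_\eps(x)\le\sup_h\{f(x+h)-\frac1\eps\|h\|_R^2-\inf_k(\cdots)\}$; the clean way is: for the upper bound, fix $x$, and for arbitrary $h$ use the H\"older bound $f(x+h-k)\le f(x)+[f]_{{\rm BUC}_R^\alpha}\|h-k\|_R^\alpha$; optimizing $k$ (here one picks $k$ near $h$) and then $h$ leads to a one-variable optimization of the form $\sup_{r\ge 0}\{[f]r^\alpha-\frac1{2\eps}r^2\}$ type, whose maximum scales like $[f]^{2/(2-\alpha)}\eps^{\alpha/(2-\alpha)}$, yielding $c_\alpha$. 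The lower bound $0\le f(x)-f_\eps(x)$ follows since taking $k=h$ in the inner inf gives $\inf_k\{\cdots\}\le f(x)+\frac1{2\eps}\|h\|_R^2$, whence $f_\eps(x)\le\sup_h\{f(x)+\frac1{2\eps}\|h\|_R^2-\frac1\eps\|h\|_R^2\}=f(x)$.

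Finally, for the gradient estimate \eqref{LL_derivative_estimate}, I would use the standard Lasry--Lions semiconcavity/semiconvexity mechanism: the inner inf-convolution with the quadratic penalty $\frac1{2\eps}\|k\|_R^2$ makes $g_\eps$ semiconcave along $H_R$ with constant $1/\eps$, while the outer sup-convolution with $\frac1\eps\|h\|_R^2$ makes $f_\eps$ semiconvex along $H_R$ with constant $2/\eps$; a function that is simultaneously semiconvex and (being a Moreau envelope of a bounded function) semiconcave along the Hilbert direction $H_R$ is $H_R$-differentiable, and its $H_R$-gradient is controlled by $\sup|f-f_\eps|$ divided by the penalty scale — concretely, using $0\le f-f_\eps\le c_\alpha[f]^{2/(2-\alpha)}\eps^{\alpha/(2-\alpha)}$ from the previous step together with the semiconvexity constant $2/\eps$ in the inequality $\|\J_Rf_\eps(x)\|_R^2\le C\cdot\frac{2}\eps\cdot\|f-f_\eps\|_\infty$ (the usual estimate $|\nabla\phi|^2\lesssim (\text{semiconvexity const})\cdot\text{osc}$), and substituting, gives exactly the exponent $(\alpha-1)/(2-\alpha)$ and the constant $2\sqrt2 c_\alpha^{1/2}[f]^{1/(2-\alpha)}$.

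The main obstacle I anticipate is purely technical rather than conceptual: carrying out the optimization and the semiconvexity argument \emph{intrinsically on the non-closed subspace $H_R$}, which is a Hilbert space under $\|\cdot\|_R$ but only continuously (not isometrically) embedded in $\X$, so one must be careful that all infima/suprema are taken over $H_R$ and that the ``differentiability'' produced is genuinely $H_R$-differentiability in the sense of Definition \ref{intr_defn_diff}. The boundedness and uniform continuity of $f_\eps$ as a function on all of $\X$ (not just $H_R$) needs a short separate check — uniform continuity transfers because $|f_\eps(x)-f_\eps(y)|\le\omega_f(\|x-y\|)$ with the same modulus $\omega_f$ of $f$, since translating $x$ by a fixed $\X$-vector translates every term in the double envelope uniformly. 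Everything else is the classical Lasry--Lions computation adapted to the $R$-inner product.
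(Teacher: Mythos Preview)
Your outline is correct and follows essentially the same Lasry--Lions route as the paper: bound by choosing $k=h$ and $h=0$ for \eqref{LL_limitatezza}, take $h=0$ and optimize the one-variable problem $\sup_{r\ge0}\{[f]_{{\rm BUC}^\alpha_R(\X)}r^\alpha-\tfrac1{2\eps}r^2\}$ for \eqref{LL_approximation}, use semiconvexity/semiconcavity along $H_R$ for $H_R$-differentiability, and control the near-maximizer of the outer sup for \eqref{LL_derivative_estimate}. One small correction: the inequality you invoke for the gradient bound, ``$\|\nabla\phi\|^2\lesssim(\text{semiconvexity const})\cdot\text{osc}(\phi)$'', would only give $\|\D_Rf_\eps\|_\infty^2\lesssim \eps^{-1}\|f\|_\infty$, which is too weak; the reason $\|f-f_\eps\|_\infty$ (rather than $\text{osc}(f_\eps)$) enters is the specific two-layer structure: if $h_\sigma$ is $\sigma$-near-optimal in the outer $\sup$ at $x$, then taking $k=h_\sigma$ in the inner $\inf$ yields $\tfrac{1}{2\eps}\|h_\sigma\|_R^2\le f(x)-f_\eps(x)+\sigma$, and combining this with the one-sided Lipschitz bound $f_\eps(x+h)-f_\eps(x)\le \tfrac1\eps\|h\|_R^2+\tfrac2\eps\langle h,h_\sigma\rangle_R+\sigma$ gives exactly $\|\D_Rf_\eps\|_\infty\le \tfrac2\eps\|h_\sigma\|_R\le 2\sqrt{2}\,c_\alpha^{1/2}[f]^{1/(2-\alpha)}\eps^{(\alpha-1)/(2-\alpha)}$, which is how the paper proceeds.
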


\begin{proof}
Throughout the proof we fix $\eps>0$. We start by showing that \eqref{LL_limitatezza} holds true. Indeed, for any $x\in\X$, ot holds
\begin{align}
f_\eps(x)&=\sup_{h\in H_R}\set{\inf_{k\in H_R}\set{f(x+h-k)+\frac{1}{2\eps}\|k\|_R^2}-\frac{1}{\eps}\|h\|_R^2}\notag\\
&\leq \sup_{h\in H_R}\set{f(x)+\frac{1}{2\eps}\|h\|_R^2-\frac{1}{\eps}\|h\|_R^2}\leq f(x)\leq \|f\|_\infty.\label{Gerald1}
\end{align}
In a similar way, for every $x\in\X$, we get
\begin{align}
f_\eps(x)&=\sup_{h\in H_R}\set{\inf_{k\in H_R}\set{f(x+h-k)+\frac{1}{2\eps}\|k\|_R^2}-\frac{1}{\eps}\|h\|_R^2}\notag\\
&\geq \inf_{k\in H_R}\set{f(x-k)+\frac{1}{2\eps}\|k\|_R^2}\geq -\norm{f}_\infty.\label{Gerald2}
\end{align}
By \eqref{Gerald1} and \eqref{Gerald2} we get \eqref{LL_limitatezza}.

Now we prove that $f_\eps$ is uniformly continuous. Since $f$ is uniformly continuous we know that for every $\eta>0$ there exists $\delta:=\delta(\eta)>0$ such that for every $x,y\in\X$ with $|x-y|<\delta$ it holds $|f(x)-f(y)|<\eta$. Let $x,y\in\X$ satisfying $|x-y|<\delta$, then for every $\sigma>0$ there exists $h_\sigma,k_\sigma\in H_R$ such that
\begin{align*}
f_\eps(x)-f_\eps(y)&\leq \inf_{k\in H_R}\set{f(x+h_\sigma-k)+\frac{1}{2\eps}\|k\|_R^2}-\frac{1}{\eps}\|h_\sigma\|_R^2+\sigma\\
&\qquad-\inf_{k\in H_R}\set{f(y+h_\sigma-k)+\frac{1}{2\eps}\|k\|_R^2}+\frac{1}{\eps}\|h_\sigma\|_R^2\\
&\leq f(x+h_\sigma-k_\sigma)+\frac{1}{2\eps}\|k_\sigma\|_R^2-f(y+h_\sigma-k_\sigma)-\frac{1}{2\eps}\|k_\sigma\|_R^2+2\sigma\\
&\leq \eta+2\sigma.
\end{align*}
In a similar way we get that $f_\eps(x)-f_\eps(y)\geq -\eta-2\sigma$. So $f_\eps$ is uniformly continuous.

It is now time to prove \eqref{LL_approximation}. Let $x\in\X$, for every $\eta>0$ there exists $k_\eta\in H_R$ such that
\begin{align}
0\leq f(x)-f_\eps(x)&\leq f(x)-\inf_{k\in H_R}\set{f(x-k)+\frac{1}{2\eps}\|k\|_R^2}\notag\\
&\leq f(x)-f(x-k_\eta)-\frac{1}{2\eps}\|k_\eta\|_R^2+\eta\notag\\
&\leq [f]_{{\rm BUC}_R^\alpha(\X)}\|k_\eta\|_R^\alpha-\frac{1}{2\eps}\|k_\eta\|_R^2+\eta.\label{Regan}
\end{align}
Before proceeding we need to estimate $\|k_\eta\|_R$. By \eqref{Regan} we get
\begin{align*}
\|k_\eta\|_R^2\leq 2\eps [f]_{{\rm BUC}_R^\alpha(\X)}\|k_\eta\|_R^\alpha+2\eps \eta.
\end{align*}
By the Young inequality, for every $c>0$
\begin{align*}
\|k_\eta\|_R^2\leq \frac{\alpha}{2}c^{2/\alpha} \|k_\eta\|_R^2+\frac{2-\alpha}{2}\frac{1}{c^{2/(2-\alpha)}}  (2\eps [f]_{{\rm BUC}_R^\alpha(\X)})^{2/(2-\alpha)}+2\eps \eta.
\end{align*}
Now taking $c=\alpha^{-\alpha/2}$ we get 
\begin{align}\label{Truman}
\|k_\eta\|_R^2\leq (2-\alpha)\alpha^{\alpha/(2-\alpha)}2^{2/(2-\alpha)}[f]_{{\rm BUC}_R^\alpha(\X)}^{2/(2-\alpha)}\eps^{2/(2-\alpha)}+4\eps\eta.
\end{align}
Now using \eqref{Regan} and \eqref{Truman} we get, for any $x\in\X$
\begin{align*}
0\leq f(x)-f_\eps(x)&\leq [f]_{{\rm BUC}_R^\alpha(\X)}\big((2-\alpha)\alpha^{\alpha/(2-\alpha)}2^{2/(2-\alpha)}[f]_{{\rm BUC}_R^\alpha(\X)}^{2/(2-\alpha)}\eps^{2/(2-\alpha)}+4\eps\eta\big)^{\alpha/2}+\eta.
\end{align*}
Since the above estimate holds for every $\eta>0$ we get \eqref{LL_approximation}.

The proof of the existence of the $H_R$-derivative of $f_\eps$ is based upon a known result that state that if a function $u$ is both semiconvex and semiconcave, then $u$ is Fr\'echet differentiable with Lipschitz continuous Fr\'echet derivate operator (we refer to \cite[Theorem 3.1]{KZ21} for a proof of this result). The proof follows the same lines of the one in \cite[p. 264]{LL86} replacing the function $\underline{u}_{\eps,\delta}$ with the function $h\mapsto f_\eps(x+h)$. This is due to the fact that, for every $x\in\X$, if we consider $u_x:H_R\ra\R$ defined as 
\[u_x(h):=f(x+h),\qquad h\in H_R,\] 
then, for every $x\in \X$, the function $k\mapsto f_\eps(x+k)$ coincides with the classical Lasry--Lions approximation of the function $u_x$, defined in \cite{LL86}, evaluated at $k$. This gives us that $\D_R f_\eps(x)$ exists for every $x\in\X$ and it satisfies
\begin{equation}\label{lipgradiente}
\|\D_Rf_\eps(x+h)-\D_Rf_\eps(x+k)\|_{\mathcal{L}(H_R;\R)}\leq L\|h-k\|_R,\qquad h,k\in H_R,
\end{equation}
for some positive constant $L$, independent of $h,k\in H_R$.

We are now in the position to prove \eqref{LL_derivative_estimate}. Let $x\in\X$ and observe that for every $\sigma>0$ there exists $h_\sigma\in H_R$ such that
\begin{align*}
f_\eps(x)\leq \inf_{k\in H_R}\set{f(x+h_\sigma-k)+\frac{1}{2\eps}\|k\|_R^2}-\frac{1}{\eps}\|h_\sigma\|_R^2+\sigma.
\end{align*}
A straightforward calculation gives us
\begin{align*}
\frac{1}{\eps}\|h_\sigma\|_R^2\leq f(x)-f_\eps(x)+\sigma+\frac{1}{2\eps}\|h_\sigma\|_R^2,\qquad x\in\X.
\end{align*}
So by \eqref{LL_approximation} we obtain
\begin{align}\label{Discipline}
\|h_\sigma\|_R^2\leq 2 c_\alpha[f]_{{\rm BUC}_R^\alpha(\X)}^{2/(2-\alpha)}\eps^{2/(2-\alpha)}+2\eps\sigma.
\end{align}
By \eqref{Discipline} we get for $x\in\X$ and $h\in H_R$
\begin{align}
f_\eps(x+h)-f_\eps(x)&\leq \inf_{k\in H_R}\set{f(x+h+h_\sigma-k)+\frac{1}{2\eps}\|k\|_R^2}-\frac{1}{\eps}\|h_\sigma\|_R^2+\sigma\notag\\
&\qquad -\inf_{k\in H_R}\set{f(x+h+h_\sigma-k)+\frac{1}{2\eps}\|k\|_R^2}+\frac{1}{\eps}\|h+h_\sigma\|_R^2\notag\\
&=\frac{1}{\eps}\|h+h_\sigma\|_R^2-\frac{1}{\eps}\|h_\sigma\|_R^2+\sigma=\frac{1}{\eps}\|h\|_R^2+\frac{2}{\eps}\langle h,h_\sigma\rangle_R+\sigma\notag\\
&\leq \frac{1}{\eps}\|h\|_R^2+\frac{2}{\eps}\|h\|_R(2 c_\alpha[f]_{{\rm BUC}_R^\alpha(\X)}^{2/(2-\alpha)}\eps^{2/(2-\alpha)}+2\eps\sigma)^{1/2}+\sigma.\label{supplex}
\end{align}
Since \eqref{supplex} holds for every $\sigma>0$, taking the infimum we get for any $x\in\X$ and $h\in H_R$
\begin{align*}
f_\eps(x+h)-f_\eps(x)\leq \frac{1}{\eps}\|h\|_R^2+2\|h\|_R(2 c_\alpha[f]_{{\rm BUC}_R^\alpha(\X)}^{2/(2-\alpha)})^{1/2}\eps^{(\alpha-1)/(2-\alpha)}.
\end{align*}
A standard argument concludes the proof.
\end{proof}

We shall use the $K$ method for real interpolation spaces (see, for example, \cite{Lun18,Tri95}). Let $\K_1$ and $\K_2$ be two Banach spaces, with norms $\norm{\cdot}_{\K_1}$ and $\norm{\cdot}_{\K_2}$, respectively. If $\K_2\subseteq \K_1$ with a continuous embedding and both of them are continuously embedded in a Hausdorff topological vector space $\mathcal{V}$, then for every $r>0$ and $x\in \K_1+\K_2$ we define
\begin{align*}
K(r,x):=\inf\set{\|a\|_{\K_1}+r\|b\|_{\K_2}\tc x=a+b,\ a\in \K_1,\ b\in \K_2}.
\end{align*}
For any $\vartheta\in(0,1)$, we set
\begin{align}
(\K_1,\K_2)_{\vartheta,\infty}&:=\left\{x\in \K_1+\K_2\,\middle|\, \|x\|_{(\K_1,\K_2)_{\vartheta,\infty}}:=\sup_{r>0}r^{-\vartheta}K(r,x)<+\infty\right\}.\label{defn_norm_interp}
\end{align}
It is standard to show that $(\K_1,\K_2)_{\vartheta,\infty}$ endowed with the norm $\norm{\cdot}_{(\K_1,\K_2)_{\vartheta,\infty}}$ is a Banach space.

\begin{thm}\label{intDS}
Assume Hypothesis \ref{STR1}\eqref{STR1.1} holds true and let $\alpha\in(0,1)$. 
Up to an equivalent renorming, it holds $({\rm BUC}(\X),\mathfrak{X})_{\alpha,\infty}={\rm BUC}_R^\alpha(\X)$.
\end{thm}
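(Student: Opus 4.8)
The plan is to prove the two continuous inclusions ${\rm BUC}_R^\alpha(\X)\hookrightarrow({\rm BUC}(\X),\mathfrak{X})_{\alpha,\infty}$ and $({\rm BUC}(\X),\mathfrak{X})_{\alpha,\infty}\hookrightarrow{\rm BUC}_R^\alpha(\X)$ separately, using the $K$-functional description of the interpolation space. The main new tool is the modified Lasry--Lions approximation from Proposition \ref{Lex}, which plays the role that mollification or the Ornstein--Uhlenbeck semigroup plays in the classical finite-dimensional arguments.

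For the inclusion ${\rm BUC}_R^\alpha(\X)\subseteq({\rm BUC}(\X),\mathfrak{X})_{\alpha,\infty}$, I would take $f\in{\rm BUC}_R^\alpha(\X)$ and, for each $r>0$, choose $\eps=\eps(r)$ and use the splitting $f=(f-f_\eps)+f_\eps$ with $f_\eps$ as in \eqref{LL_definition}. By Proposition \ref{Lex}, $f_\eps\in\mathfrak X$, $\|f-f_\eps\|_\infty\lesssim[f]_{{\rm BUC}_R^\alpha}^{2/(2-\alpha)}\eps^{\alpha/(2-\alpha)}$ by \eqref{LL_approximation}, and $\|f_\eps\|_{\mathfrak X}\lesssim\|f\|_\infty+[f]_{{\rm BUC}_R^\alpha}^{1/(2-\alpha)}\eps^{(\alpha-1)/(2-\alpha)}$ by \eqref{LL_limitatezza} and \eqref{LL_derivative_estimate}. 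Hence
\[
K(r,f)\leq\|f-f_\eps\|_\infty+r\|f_\eps\|_{\mathfrak X}\lesssim [f]^{2/(2-\alpha)}\eps^{\alpha/(2-\alpha)}+r\|f\|_\infty+r[f]^{1/(2-\alpha)}\eps^{(\alpha-1)/(2-\alpha)}.
\]
For $r\leq1$ one optimizes by choosing $\eps\sim r^{(2-\alpha)/1}\cdot(\text{const})$, i.e. $\eps^{1/(2-\alpha)}\sim r$ after scaling out $[f]$, which balances the first and third terms and gives the two of them of size $\sim[f]\, r^\alpha$, while $r\|f\|_\infty\leq r^\alpha\|f\|_\infty$; for $r\geq1$ one simply uses $K(r,f)\leq\|f\|_\infty$. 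Taking $\sup_{r>0}r^{-\alpha}K(r,f)$ yields $\|f\|_{({\rm BUC},\mathfrak X)_{\alpha,\infty}}\lesssim\|f\|_{{\rm BUC}_R^\alpha}$.

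For the reverse inclusion $({\rm BUC}(\X),\mathfrak{X})_{\alpha,\infty}\subseteq{\rm BUC}_R^\alpha(\X)$, let $f$ be in the interpolation space. Fix $x\in\X$ and $h\in H_R\setminus\{0\}$, and for $r>0$ pick a decomposition $f=a_r+b_r$ with $a_r\in{\rm BUC}(\X)$, $b_r\in\mathfrak X$ and $\|a_r\|_\infty+r\|b_r\|_{\mathfrak X}\leq 2K(r,f)$. Then
\[
|f(x+h)-f(x)|\leq|a_r(x+h)-a_r(x)|+|b_r(x+h)-b_r(x)|\leq 2\|a_r\|_\infty+\|h\|_R\,\|\D_Rb_r\|_\infty,
\]
where the bound on the $b_r$ term uses that $b_r$ is $H_R$-differentiable together with the mean value inequality along the segment $t\mapsto x+th$ in the direction $h\in H_R$. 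This is $\leq 2\|a_r\|_\infty+r^{-1}\|h\|_R\,(r\|\D_Rb_r\|_\infty)\leq 2(1+r^{-1}\|h\|_R)K(r,f)\leq 2(1+r^{-1}\|h\|_R)\,r^\alpha\|f\|_{({\rm BUC},\mathfrak X)_{\alpha,\infty}}$. Choosing $r=\|h\|_R$ gives $|f(x+h)-f(x)|\leq 4\|h\|_R^\alpha\|f\|_{({\rm BUC},\mathfrak X)_{\alpha,\infty}}$, so $[f]_{{\rm BUC}_R^\alpha}\lesssim\|f\|_{({\rm BUC},\mathfrak X)_{\alpha,\infty}}$; boundedness and uniform continuity of $f$ are inherited since ${\rm BUC}(\X)$ is the larger space in the couple. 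Combining the two inclusions gives equality with equivalent norms.

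The main obstacle is the first inclusion, and specifically verifying that the Lasry--Lions regularization $f_\eps$ genuinely lies in $\mathfrak X$ with the stated quantitative bounds — that is the content of Proposition \ref{Lex}, which I am entitled to invoke, but the delicate point in assembling the interpolation estimate is getting the exponents of $\eps$ in \eqref{LL_approximation} and \eqref{LL_derivative_estimate} to balance correctly against the two regimes $r\leq1$ and $r\geq1$. The second inclusion is essentially soft, the only subtlety being the mean value inequality for $H_R$-differentiable functions along directions in $H_R$, which is legitimate because the segment $x+th$, $t\in[0,1]$, stays in a single coset $x+H_R$ on which $\D_R$ acts as an honest derivative.
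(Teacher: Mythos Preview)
Your proposal is correct and follows essentially the same approach as the paper: the inclusion ${\rm BUC}_R^\alpha(\X)\subseteq({\rm BUC}(\X),\mathfrak X)_{\alpha,\infty}$ is obtained via the Lasry--Lions splitting $f=(f-f_\eps)+f_\eps$ from Proposition \ref{Lex} with the choice $\eps=r^{2-\alpha}$ (exactly your $\eps^{1/(2-\alpha)}\sim r$), and the reverse inclusion uses a near-optimal decomposition together with the mean value inequality along $H_R$-directions and the choice $r=\|h\|_R$. The only cosmetic differences are in the bookkeeping of constants and in how the $\|f\|_\infty$ contribution to $\|f_\eps\|_{\mathfrak X}$ is absorbed.
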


\begin{proof}
We start by showing that $({\rm BUC}(\X),\mathfrak{X})_{\alpha,\infty}\subseteq {\rm BUC}_R^\alpha(\X)$. For every $\varphi\in ({\rm BUC}(\X),\mathfrak{X})_{\alpha,\infty}$ and any $r,\eps>0$ there exist $f_{r,\eps}\in{\rm BUC}(\X)$ and $g_{r,\eps}\in\mathfrak{X}$ such that 
\begin{align*}
\varphi(x)=f_{r,\eps}(x)+g_{r,\eps}(x),\qquad x\in\X;
\end{align*}
and
\begin{align}\label{ceffo}
\|f_{r,\eps}\|_\infty+r\|g_{r,\eps}\|_{\mathfrak{X}}\leq r^\alpha\|\varphi\|_{({\rm BUC}(\X),\mathfrak{X})_{\alpha,\infty}}+\eps.
\end{align}
So by the mean value theorem and \eqref{ceffo} we get for $x\in\X$ and $h\in H_R$
\begin{align}
|\varphi(x+h)-\varphi(x)|&\leq 2\|f_{r,\eps}\|_\infty+|g_{r,\eps}(x+h)-g_{r,\eps}(x)|\notag\\
&\leq 2\|f_{r,\eps}\|_\infty+\|\D_R g_{r,\eps}\|_\infty\|h\|_R\notag\\
&\leq  2 r^\alpha \|\varphi\|_{({\rm BUC}(\X),\mathfrak{X})_{\alpha,\infty}}+2\eps+r^{\alpha-1}\|\varphi\|_{({\rm BUC}(\X),\mathfrak{X})_{\alpha,\infty}}\|h\|_R+\eps r^{-1}\|h\|_R.\label{deodorant}
\end{align}
Now letting $\eps$ tend to zero in \eqref{deodorant} and setting $r=\|h\|_R$ we get
\begin{align*}
|\varphi(x+h)-\varphi(x)|&\leq 3\|\varphi\|_{({\rm BUC}(\X),\mathfrak{X})_{\alpha,\infty}}\|h\|_R^\alpha,\qquad x\in\X,\ h\in H_R.
\end{align*}
This gives us the continuous embedding $({\rm BUC}(\X),\mathfrak{X})_{\alpha,\infty}\subseteq {\rm BUC}_R^\alpha(\X)$.

Now we prove that ${\rm BUC}_R^\alpha(\X)\subseteq ({\rm BUC}(\X),\mathfrak{X})_{\alpha,\infty}$. Let $\varphi\in{\rm BUC}_R^\alpha(\X)$ and for every $\eps>0$ let $\varphi_\eps$ be the function defined in \eqref{LL_definition}. For $r\in (0,1)$ we consider the functions $f_r:\X\ra\R$ and $g_r:\X\ra\R$ defined as
\begin{align*}
f_r(x):=\varphi(x)-\varphi_{r^{2-\alpha}}(x),\qquad g_r(x):=\varphi_{r^{2-\alpha}}(x),\qquad x\in\X.
\end{align*}
By \eqref{LL_approximation} we get that there exists a constant $k_1=k_1(\alpha,\varphi)>0$ such that 
\begin{align*}
\|f_r\|_\infty\leq k_1 r^\alpha.
\end{align*}
By \eqref{LL_limitatezza} and \eqref{LL_derivative_estimate}, there exist a constant $k_2=k_2(\alpha,\varphi)>0$ such that
\begin{align*}
\|g_r\|_{\mathfrak{X}}&=\|\varphi_{r^{2-\alpha}}\|_\infty+\|\D_R\varphi_{r^{2-\alpha}}\|_\infty\leq k_2r^{\alpha-1}
\end{align*}
So for every $r\in(0,1)$
\begin{align*}
K(r,\varphi)\leq (k_1+k_2)r^{\alpha}.
\end{align*}
Observe that the above estimate is trivial in the case that $r>1$. Recalling \eqref{defn_norm_interp} we get the thesis.
\end{proof}

\begin{rmk}\label{Cell}
We stress that \eqref{lipgradiente} does not guarantee the uniformly continuity of the $H_R$-derivative operator with respect to $x$. On the other hand if $R={\rm Id}_\X$ then, by \eqref{lipgradiente}, the Lasry--Lions type approximants $f_\epsilon$ belongs to ${\rm BUC}^1(\X)$. Hence Proposition \ref{Lex} and Theorem \ref{intDS} hold true with $\mathfrak{X}$ replaced with ${\rm BUC}^1(\X)$. This result was already present in \cite{CDP96-2}.
\end{rmk}

We conclude this section by recalling a classical interpolation result that we will use in the paper (see \cite[Theorem 1.12 of Chapter 5]{CR88} for a proof).


\begin{thm}\label{classic}
Let $\K_0,\K_1,\mathcal{H}_0,\mathcal{H}_1$ be Banach spaces such that $\mathcal{H}_0\subseteq \mathcal{K}_0$ and $\mathcal{H}_1\subseteq \mathcal{K}_1$ with continuous embeddings. If $T$ is a linear mapping such that $T:\K_0\ra\K_1$ and $T:\mathcal{H}_0\ra\mathcal{H}_1$ and for some $N_{\K},N_{\mathcal{H}}>0$ it hold
\begin{align*}
\|Tx\|_{\K_1}&\leq N_{\K}\|x\|_{\K_0},\qquad x\in \K_0;\\
\|Ty\|_{\mathcal{H}_1}&\leq N_{\mathcal{H}}\|y\|_{\mathcal{H}_0},\qquad y\in \mathcal{H}_0,
\end{align*}
then, for every $\vartheta\in(0,1)$, $T$ maps $(\K_0,\mathcal{H}_0)_{\vartheta,\infty}$ in $(\K_1,\mathcal{H}_1)_{\vartheta,\infty}$ and
\begin{align*}
\|Tx\|_{(\K_1,\mathcal{H}_1)_{\vartheta,\infty}}&\leq N_{\K}^{1-\vartheta}N_{\mathcal{H}}^{\vartheta}\|x\|_{(\K_0,\mathcal{H}_0)_{\vartheta,\infty}},\qquad x\in (\K_0,\mathcal{H}_0)_{\vartheta,\infty}.
\end{align*}
\end{thm}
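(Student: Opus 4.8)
The plan is to reduce the statement to the textbook version of the interpolation theorem for the $K$-functional, which is exactly \cite[Theorem 1.12 of Chapter 5]{CR88}; since the statement explicitly cites this reference, the work consists only in checking that the hypotheses of the cited theorem are met and in tracking the constants. First I would fix $x\in(\K_0,\mathcal H_0)_{\vartheta,\infty}$ and $r>0$, and use the defining infimum of $K(r,x)$ to pick, for arbitrary $\delta>0$, a decomposition $x=a+b$ with $a\in\K_0$, $b\in\mathcal H_0$ and $\|a\|_{\K_0}+r\|b\|_{\mathcal H_0}\le K(r,x)+\delta$. Applying $T$ gives $Tx=Ta+Tb$, a legitimate decomposition of $Tx$ in $\K_1+\mathcal H_1$ because of the two mapping hypotheses on $T$.

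Next I would estimate the corresponding $K$-functional of $Tx$. From the two operator-norm bounds,
\[
\|Ta\|_{\K_1}+s\|Tb\|_{\mathcal H_1}\le N_\K\|a\|_{\K_0}+s\,N_{\mathcal H}\|b\|_{\mathcal H_0},
\]
and choosing $s=r N_\K/N_{\mathcal H}$ makes the right-hand side equal to $N_\K\big(\|a\|_{\K_0}+r\|b\|_{\mathcal H_0}\big)\le N_\K\big(K(r,x)+\delta\big)$. Hence $K\!\big(rN_\K/N_{\mathcal H},Tx\big)\le N_\K\big(K(r,x)+\delta\big)$, and letting $\delta\to0$,
\[
K\!\left(\frac{N_\K}{N_{\mathcal H}}r,\,Tx\right)\le N_\K\,K(r,x),\qquad r>0.
\]
Multiplying by the appropriate power, substituting $\rho=(N_\K/N_{\mathcal H})r$, and taking the supremum over $\rho>0$ in the definition \eqref{defn_norm_interp} yields
\[
\|Tx\|_{(\K_1,\mathcal H_1)_{\vartheta,\infty}}
=\sup_{\rho>0}\rho^{-\vartheta}K(\rho,Tx)
\le N_\K\left(\frac{N_{\mathcal H}}{N_\K}\right)^{\vartheta}\sup_{r>0}r^{-\vartheta}K(r,x)
= N_\K^{1-\vartheta}N_{\mathcal H}^{\vartheta}\,\|x\|_{(\K_0,\mathcal H_0)_{\vartheta,\infty}},
\]
which is the asserted estimate; in particular it shows $Tx\in(\K_1,\mathcal H_1)_{\vartheta,\infty}$.

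Since the result is a classical one and we are entitled to invoke \cite[Theorem 1.12 of Chapter 5]{CR88} directly, the honest thing is simply to refer to it; the only mild subtlety — and the single place one must be careful — is the rescaling of the variable in the $K$-functional, i.e. the choice $s=rN_\K/N_{\mathcal H}$ and the ensuing change of variable, which is what produces the geometric interpolation of the constants $N_\K^{1-\vartheta}N_{\mathcal H}^{\vartheta}$ rather than a cruder bound. No further obstacle is expected.
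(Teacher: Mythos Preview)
Your proposal is correct and is precisely the standard $K$-functional argument; the rescaling $s=rN_\K/N_{\mathcal H}$ and the change of variable are exactly what produces the constant $N_\K^{1-\vartheta}N_{\mathcal H}^{\vartheta}$. The paper itself gives no proof at all of this theorem, merely citing \cite[Theorem 1.12 of Chapter 5]{CR88}, so your write-up actually supplies more detail than the paper does.
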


\section{$H_R$ regularity of the mild solution $\{X(t,x)\}_{t\geq 0}$}\label{Regularity_Mild}

In this section we are interested in investigate the differentiability properties of the mild solution of \eqref{eqF02}. For every $x\in\X$, by Hypotheses \ref{STR1} and \cite[Theorem 7.5]{DA-ZA4}, \eqref{eqF02} has a unique mild solution $\{X(t,x)\}_{t\geq 0}$. 
Recall that the map $x\mapsto X(\cdot,x)$ is Lipschitz continuous uniformly with respect to the time variable (see \cite[Proposition 3.13]{Big21} or \cite[Proposition 3.7]{Mas08} for a proof). More precisely, for every $T>0$, there exists a positive constant $\eta=\eta(T)$ such that
\begin{align}\label{Lip_mild}
\sup_{t\in[0,T]}\|X(t,x)-X(t,y)\|\leq\eta \|x-y\|,\qquad x,y\in\X.
\end{align}
By \cite[Section 2]{DA5} (still using Hypotheses \ref{STR1}) for any $t>0$ and for $\mathbb{P}$-a.e. $\omega\in\Omega$ the map $x\mapsto X(t,x)(w)$ is three times Gateaux differentiable and if we set for $x,h,k,j\in\X$ and $t>0$
\begin{align*}
\delta^h_1(t,x)&:=\J_GX(t,x)h;\\
\delta^{h,k}_2(t,x)&:=\J^2_GX(t,x)(h,k);\\
\delta^{h,k,j}_3(t,x)&:=\J^3_GX(t,x)(h,k,j),
\end{align*}
then the processes $\{\delta^h_1(t,x)\}_{t\geq 0}$, $\{\delta^{h,k}_2(t,x)\}_{t\geq 0}$ and $\{\delta^{h,k,j}_3(t,x)\}_{t\geq 0}$ are the mild solutions of  
\begin{align}\label{eqdelta1}
&\eqsys{
d\delta^h_1(t,x)=[A+R\J G(X(t,x))]\delta^h_1(t,x)dt, & t>0;\\
\delta^h_1(0,x)=h;
}\\
\label{eqdelta2}
&\eqsys{
d\delta^{h,k}_2(t,x)=\big([A+R\J G(X(t,x))]\delta^{h,k}_2(t,x)\\
\qquad\qquad\qquad + R\J^2 G(X(t,x))(\delta^h_1(t,x),\delta^k_1(t,x))\big) dt, & t>0;\\
\delta^{h,k}_2(0,x)=0;
}
\\
\label{eqdelta3}
&\eqsys{
d\delta^{h,k,j}_3(t,x)=\big([A+R\J G(X(t,x))]\delta^{h,k,j}_3(t,x)\\ 
\qquad\qquad\qquad\, + R\J^2 G(X(t,x))(\delta^j_1(t,x),\delta^{h,k}_2(t,x))\\ 
\qquad\qquad\qquad\, + R\J^2 G(X(t,x))(\delta^{h,j}_2(t,x),\delta^{k}_1(t,x))\\ 
\qquad\qquad\qquad\, + R\J^2 G(X(t,x))(\delta^h_1(t,x),\delta^{j,k}_2(t,x))\\ 
\qquad\qquad\qquad\, + R\J^3 G(X(t,x))(\delta^{h}_1(t,x),\delta^{k}_1(t,x),\delta^{j}_1(t,x))\big)dt, & t>0;\\ 
\delta^{h,k,j}_3(0,x)=0.
}
\end{align} 

The first proposition we will prove concerns some uniform estimates, with respect to $x\in\X$, of the processes $\{\delta^h_1(t,x)\}_{t\geq 0}$, $\{\delta^{h,k}_2(t,x)\}_{t\geq 0}$ and $\{\delta^{h,k,j}_3(t,x)\}_{t\geq 0}$.

\begin{prop}\label{Hreg}
Assume that Hypotheses \ref{STR2} hold true. For every $x\in\X$ and $h,k,j\in H_R$ the processes $\{\delta^h_1(t,x)\}_{t\geq 0}$, $\{\delta^{h,k}_2(t,x)\}_{t\geq 0}$ and $\{\delta^{h,k,j}_3(t,x)\}_{t\geq 0}$ are $H_R$-valued. Moreover there exist two positive constants $M_2$ and $M_3$ such that for every $x\in\X$, $h,k,j\in H_R$ and $t>0$, it holds $\mathbb{P}$-a.e.
\begin{align}
\|\delta^h_1(t,x)\|_R&\leq e^{\zeta_R t}\norm{h}_R;\label{1R}\\
\|\delta^{h,k}_2(t,x)\|_R&\leq M_2 K_1(t,\zeta_R)\norm{h}_R\norm{k}_R;\label{2R}\\
\|\delta^{h,k,j}_3(t,x)\|_R&\leq M_3 K_2(t,\zeta_R)\norm{h}_R\norm{k}_R\norm{j}_R,\label{3R}
\end{align}
where
\begin{align}
K_1(t,\zeta_R)&:=
\eqsys{
\zeta_R^{-1}e^{t\zeta_R}(e^{t\zeta_R}-1),& \zeta_R\neq 0;\\
t,& \zeta_R=0;
}\label{defn_K1}\\
K_2(t,\zeta_R)&:=
\eqsys{
(2\zeta^2_R)^{-1}e^{t\zeta_R}(e^{t\zeta_R}-1)((1+\zeta_R)e^{t\zeta_R}+\zeta_R-1),& \zeta_R\neq 0;\\
2^{-1}t^2+t,& \zeta_R=0.
}\label{defn_K2}
\end{align}
\end{prop}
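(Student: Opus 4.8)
The plan is to treat the three estimates \eqref{1R}, \eqref{2R}, \eqref{3R} hierarchically, since the equation \eqref{eqdelta2} for $\delta_2$ has $\delta_1$ in its inhomogeneous term, and \eqref{eqdelta3} for $\delta_3$ involves both $\delta_1$ and $\delta_2$. First I would show that each process is $H_R$-valued: since $h,k,j\in H_R$ and $A_R$ generates a strongly continuous semigroup on $H_R$ by Hypotheses \ref{STR2}, one interprets \eqref{eqdelta1}--\eqref{eqdelta3} as mild equations in $H_R$ (the operator $R\J G(X(t,x))$ maps $\X$ into $H_R=R(\X)$ and is bounded from $H_R$ to $H_R$ because $\|R\J G(x)h\|_R=\|\J G(x)h\|\leq M\|h\|\leq M\|R\|_{\mathcal{L}(\X)}\|h\|_R$ by Proposition \ref{accar} and Hypothesis \ref{STR1}\eqref{STR1.3}; similarly the $\J^2G,\J^3G$ terms applied to $H_R$-vectors land in $H_R$ with controlled $H_R$-norm). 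A standard fixed-point / Gronwall argument in $C([0,T];H_R)$ then gives existence of $H_R$-valued mild solutions which, by uniqueness, must coincide with the Gateaux derivatives $\delta_1,\delta_2,\delta_3$.

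For the quantitative bound \eqref{1R}, the key is the dissipativity estimate \eqref{dissR}. Formally, along the evolution $\delta_1^h$ one computes $\frac{d}{dt}\|\delta_1^h(t,x)\|_R^2 = 2\scal{[A_R+R\J G(X(t,x))]\delta_1^h(t,x)}{\delta_1^h(t,x)}_R \leq 2\zeta_R\|\delta_1^h(t,x)\|_R^2$, whence $\|\delta_1^h(t,x)\|_R\leq e^{\zeta_R t}\|h\|_R$ by Gronwall. Since $\delta_1$ is only a mild solution, this computation must be justified either by a Yosida-type approximation of $A_R$ (replacing $A_R$ by $n^2R(n,A_R)A_R$, for which the trajectories are strong, passing to the limit), or by invoking that the mild solution of a linear equation with a dissipative (up to $\zeta_R$) generator satisfies the corresponding exponential bound — this is essentially \cite[Proposition 3.13]{Big21} / the theory in \cite{DA-ZA4} adapted to $H_R$. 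I would cite the relevant statement rather than redo it.

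For \eqref{2R}, I would use the variation-of-constants formula for \eqref{eqdelta2}: with $U(t,s)$ the (stochastic, $\omega$-dependent) evolution family generated by $A_R+R\J G(X(\cdot,x))$, which by \eqref{dissR} satisfies $\|U(t,s)\|_{\mathcal{L}(H_R)}\leq e^{\zeta_R(t-s)}$, one has
\[
\delta_2^{h,k}(t,x)=\int_0^t U(t,s)R\J^2G(X(s,x))(\delta_1^h(s,x),\delta_1^k(s,x))\,ds.
\]
Taking $H_R$-norms, bounding $\|R\J^2G(x)(u,v)\|_R=\|\J^2G(x)(u,v)\|\leq M\|u\|\|v\|\leq M\|R\|_{\mathcal{L}(\X)}^2\|u\|_R\|v\|_R$, and inserting \eqref{1R} for $\delta_1^h,\delta_1^k$ gives $\|\delta_2^{h,k}(t,x)\|_R\leq M\|R\|_{\mathcal{L}(\X)}^2\|h\|_R\|k\|_R\int_0^t e^{\zeta_R(t-s)}e^{2\zeta_R s}\,ds$, and the integral evaluates to exactly $K_1(t,\zeta_R)$ as in \eqref{defn_K1} (case $\zeta_R=0$ giving $t$); set $M_2:=M\|R\|_{\mathcal{L}(\X)}^2$. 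For \eqref{3R} one repeats this with \eqref{eqdelta3}: each of the five inhomogeneous terms is estimated by $M\|R\|_{\mathcal{L}(\X)}^{p}$ times products of $\|h\|_R,\|k\|_R,\|j\|_R$ and factors $e^{\zeta_R s}$, $K_1(s,\zeta_R)$; plugging into $\int_0^t e^{\zeta_R(t-s)}(\cdots)\,ds$ and using $\zeta_R^{-1}e^{s\zeta_R}(e^{s\zeta_R}-1)$ for $K_1$, the dominant computation yields the closed form $K_2(t,\zeta_R)$ in \eqref{defn_K2} after choosing $M_3$ large enough to absorb all five contributions.

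The main obstacle I expect is the rigorous justification of the dissipativity bound $\|U(t,s)\|_{\mathcal{L}(H_R)}\leq e^{\zeta_R(t-s)}$ at the level of mild (not strong) solutions with a time-dependent, $\omega$-dependent perturbation $R\J G(X(\cdot,x))$ of $A_R$: one needs that $A_R+R\J G(X(t,x))$ still generates a well-behaved evolution family on $H_R$ and that the formal energy estimate survives the approximation procedure. Everything else is bookkeeping with the explicit integrals; care is only needed to separate the $\zeta_R\neq 0$ and $\zeta_R=0$ cases consistently and to verify that the elementary integrations indeed produce precisely $K_1$ and $K_2$.
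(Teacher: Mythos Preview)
Your proposal is correct and essentially parallels the paper's argument, with one methodological difference worth noting. For \eqref{1R} you both use the energy estimate coming from \eqref{dissR}, and you both acknowledge that the formal computation needs justification at the mild-solution level via approximation (the paper simply assumes strict solutions and refers to \cite{Big21} and \cite[Proposition 6.2.2]{CER1} for the approximation step). For \eqref{2R} and \eqref{3R}, however, the paper does \emph{not} introduce the evolution family $U(t,s)$: instead it takes the $H_R$-inner product of \eqref{eqdelta2} (resp.\ \eqref{eqdelta3}) with $\delta_2$ (resp.\ $\delta_3$) itself, uses \eqref{dissR} on the linear part and Hypothesis \ref{STR1}\eqref{STR1.3} on the inhomogeneity, divides by $\|\delta_i\|_R$ (justified via \cite[Proposition A.1.3]{CER1}), and applies a scalar comparison argument to the resulting differential inequality $\frac{d}{dt}\|\delta_i\|_R\leq \zeta_R\|\delta_i\|_R + (\text{forcing})$. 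Your Duhamel route and the paper's direct energy/comparison route are of course equivalent --- solving $y'\leq \zeta_R y + f$ with $y(0)=0$ gives exactly $y(t)\leq \int_0^t e^{\zeta_R(t-s)}f(s)\,ds$ --- and your explicit integral indeed reproduces $K_1$ and $K_2$. The paper's route has the minor advantage of never needing to construct or bound the time-dependent evolution family $U(t,s)$ on $H_R$; your route is slightly more modular and makes the origin of $K_1,K_2$ as convolution integrals transparent.
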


\begin{proof}
All estimates in this proof are meant almost everywhere with respect to $\mathbb{P}$. The fact that the processes $\{\delta^h_1(t,x)\}_{t\geq 0}$, $\{\delta^{h,k}_2(t,x)\}_{t\geq 0}$ and $\{\delta^{h,k,j}_3(t,x)\}_{t\geq 0}$ are $H_R$-valued are easy consequences of the mild form of \eqref{eqdelta1}, \eqref{eqdelta2} and \eqref{eqdelta3}. So we just need to prove the estimates \eqref{1R}, \eqref{2R} and \eqref{3R}.

If $\{\delta^h_1(t,x)\}_{t\geq 0}$, $\{\delta^{h,k}_2(t,x)\}_{t\geq 0}$ and $\{\delta^{h,k,j}_3(t,x)\}_{t\geq 0}$ are equal to zero, then \eqref{1R}, \eqref{2R} and \eqref{3R} are obvious, so we can fix $t>0$, $x\in\X$ and $h,k,j\in H_R$ such that the processes $\{\delta^h_1(t,x)\}_{t\geq 0}$, $\{\delta^{h,k}_2(t,x)\}_{t\geq 0}$ and $\{\delta^{h,k,j}_3(t,x)\}_{t\geq 0}$ are non-zero. We assume that the processes $\{\delta^h_1(t,x)\}_{t\geq 0}$, $\{\delta^{h,k}_2(t,x)\}_{t\geq 0}$ and $\{\delta^{h,k,j}_3(t,x)\}_{t\geq 0}$ are strict solutions of \eqref{eqdelta1}, \eqref{eqdelta2} and \eqref{eqdelta3} respectively, otherwise we proceed as in \cite{Big21} or \cite[Proposition 6.2.2]{CER1} approximating $\{\delta^h_1(t,x)\}_{t\geq 0}$, $\{\delta^{h,k}_2(t,x)\}_{t\geq 0}$ and $\{\delta^{h,k,j}_3(t,x)\}_{t\geq 0}$ by means of sequences of more regular processes.

We start by proving \eqref{1R}. Fix $t>0$ and $x\in\X$, scalarly multiplying the uppermost equation in \eqref{eqdelta1} by $\delta^h_1(t,x)$ and using \eqref{dissR} we obtain
\begin{align*}
\frac{1}{2}\frac{d}{dt}\|\delta^h_1(t,x)\|^2_R=\langle [A+R\J G(X(t,x))]\delta^h_1(t,x),\delta^h_1(t,x)\rangle_R\leq \zeta_R\|\delta_1^h(t,x)\|^2_R.
\end{align*}
By a standart argument we obtain \eqref{1R}. 

Now we take care of the proof of \eqref{2R}. Fix $t>0$ and $x\in\X$, scalarly multiplying the uppermost equation in \eqref{eqdelta2} by $\delta^{h,k}_2(t,x)$ and using Hypothesis \ref{STR1}\eqref{STR1.3}, \eqref{dissR} and \eqref{1R} we get
\begin{align}
\scal{\frac{d}{dt}\delta^{h,k}_2(t,x)}{\delta^{h,k}_2(t,x)}_R&=\langle [A+R\J G(X(t,x))]\delta^{h,k}_2(t,x),\delta^{h,k}_2(t,x)\rangle_R\notag\\
&\qquad\qquad+\langle R\J^2G(X(t,x))(\delta^{h}_1(t,x),\delta^{k}_1(t,x)),\delta^{h,k}_2(t,x)\rangle_R\notag\\
&\leq \zeta_R\|\delta^{h,k}_2(t,x)\|_R^2+M\|R\|_{\mathcal{L}(\X)}^2 e^{2t\zeta_R}\norm{h}_R\norm{k}_R\|\delta_2^{h,k}(t,x)\|_R,\label{op}
\end{align}
where $M$ is the constant appearing in Hypothesis \ref{STR1}\eqref{STR1.3}. Recalling that, for every $x\in\X$ and $h,k\in H_R$, by \cite[Proposition A.1.3]{CER1}, the map $t\mapsto \|\delta^{h,k}_2(t,x)\|_R$ is differentiable, we can divide \eqref{op} by $\|\delta^{h,k}_2(t,x)\|_R$ to get
\begin{align*}
\frac{d}{dt}\|\delta^{h,k}_2(t,x)\|_R\leq\zeta_R\|\delta^{h,k}_2(t,x)\|_R+M\|R\|_{\mathcal{L}(\X)}^2 e^{2t\zeta_R}.
\end{align*}
By a standard comparison result we obtain \eqref{2R} with $M_2:= M\|R\|_{\mathcal{L}(\X)}^2$.

It is now time to prove \eqref{3R}. Fix $t>0$ and $x\in\X$, scalarly multiplying the uppermost equation of \eqref{eqdelta3} by $\delta^{h,k,j}_3(t,x)$, and then using Hypothesis \ref{STR1}\eqref{STR1.3}, \eqref{dissR}, \eqref{1R} and \eqref{2R} we obtain
\begin{align*}
&\scal{\frac{d}{dt}\delta^{h,k,j}_3(t,x)}{\delta^{h,k,j}_3(t,x)}_R=\langle[A+R\J G(X(t,x))]\delta^{h,k,j}_3(t,x),\delta^{h,k,j}_3(t,x)\rangle_R\\
&\qquad\qquad\qquad\qquad\qquad\qquad\quad\ \ \ + \langle R\J^2 G(X(t,x))(\delta^j_1(t,x),\delta^{h,k}_2(t,x)),\delta^{h,k,j}_3(t,x)\rangle_R\\ 
&\qquad\qquad\qquad\qquad\qquad\qquad\quad\ \ \ + \langle R\J^2 G(X(t,x))(\delta^{h,j}_2(t,x),\delta^{k}_1(t,x)),\delta^{h,k,j}_3(t,x)\rangle_R\\ 
&\qquad\qquad\qquad\qquad\qquad\qquad\quad\ \ \ + \langle R\J^2 G(X(t,x))(\delta^h_1(t,x),\delta^{j,k}_2(t,x)),\delta^{h,k,j}_3(t,x)\rangle_R\\ 
&\qquad\qquad\qquad\qquad\qquad\qquad\quad\ \ \ + \langle R\J^3 G(X(t,x))(\delta^{h}_1(t,x),\delta^{k}_1(t,x),\delta^{j}_1(t,x)),\delta^{h,k,j}_3(t,x)\rangle_R\\
&\qquad \leq\zeta_R\|\delta^{h,k,j}_3(t,x)\|_R^2\\
&\qquad \qquad + M\|R\|^2_{\mathcal{L}(\X)}(3 e^{\zeta_R t} M_2 K_1(t,\zeta_R)+\|R\|_{\mathcal{L}(\X)}e^{3\zeta_R t})\|h\|_R\|k\|_R\|j\|_R\|\delta^{h,k,j}_3(t,x)\|_R,
\end{align*}
where $M$, $M_2$ and $K_1(t,\zeta_R)$ are the objects appearing in Hypothesis \ref{STR1}\eqref{STR1.3}, \eqref{2R} and \eqref{defn_K1}, respectively. Recalling that for every $x\in\X$ and $h,k,j\in H_R$, by \cite[Proposition A.1.3]{CER1}, the map $t\mapsto \|\delta^{h,k,j}_3(t,x)\|_R$ is differentiable, we can divide \eqref{op} by $\|\delta^{h,k,j}_3(t,x)\|_R$ to get
\begin{align*}
\frac{d}{dt} \|\delta^{h,k,j}_3(t,x)\|_R\leq &\zeta_R\|\delta^{h,k,j}_3(t,x)\|_R\notag\\
&+M\|R\|^2_{\mathcal{L}(\X)}(3 e^{\zeta_R t} M_2 K_1(t,\zeta_R)+\|R\|_{\mathcal{L}(\X)}e^{3\zeta_R t})\|h\|_R\|k\|_R\|j\|_R.
\end{align*}
By a standard comparison result we obtain \eqref{2R} with $M_3:=M\max\{3M_2 ,\|R\|_{\mathcal{L}(\X)}\}\|R\|_{\mathcal{L}(\X)}^2$.
\end{proof}

We left the complex dependence on $t$ in the constants $K_1$ and $K_2$, defined in \eqref{defn_K1} and \eqref{defn_K2}, for the sake of completeness. For the rest of the paper we will just need the following simpler estimate. The proof is standard and it is left to the reader.
\begin{cor}\label{cappone}
Assume that Hypotheses \ref{STR2} hold true. If $\zeta_R\neq 0$, then for any $t>0$ 
\begin{align*}
\max\{K_1(t,\zeta_R), K_2(t,\zeta_R)\}\leq \frac{1+\abs{\zeta_R}}{\zeta_R^2}\max\{e^{t\zeta_R},e^{3t\zeta_R}\};
\end{align*}
where $K_1$ and $K_2$ are introduced in \eqref{defn_K1} and \eqref{defn_K2}, respectively.
\end{cor}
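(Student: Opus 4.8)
The plan is to reduce the estimate to elementary bounds on the single quantity $a:=e^{t\zeta_R}>0$. With this notation, \eqref{defn_K1} and \eqref{defn_K2} read
\[
K_1(t,\zeta_R)=\frac{a(a-1)}{\zeta_R},\qquad K_2(t,\zeta_R)=\frac{a(a-1)\big((1+\zeta_R)a+\zeta_R-1\big)}{2\zeta_R^2},
\]
while $\max\{e^{t\zeta_R},e^{3t\zeta_R}\}=a^3$ if $\zeta_R>0$ (so that $a\geq1$) and $\max\{e^{t\zeta_R},e^{3t\zeta_R}\}=a$ if $\zeta_R<0$ (so that $0<a<1$). I would then distinguish these two cases and, in each of them, bound $K_1$ and $K_2$ separately by $\tfrac{1+\abs{\zeta_R}}{\zeta_R^2}\max\{a,a^3\}$, taking the maximum over $K_1$ and $K_2$ only at the very end.

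For $\zeta_R>0$ (hence $a\geq1$) all the factors appearing above are nonnegative, so it suffices to use the coarse inequalities $a-1\leq a$ and $\zeta_R-1\leq\zeta_R a$, the latter giving $(1+\zeta_R)a+\zeta_R-1\leq(1+2\zeta_R)a$; one then gets
\[
K_1\leq\frac{a^2}{\zeta_R}\leq\frac{a^3}{\zeta_R}\leq\frac{1+\abs{\zeta_R}}{\zeta_R^2}\,a^3,\qquad
K_2\leq\frac{(1+2\zeta_R)\,a^3}{2\zeta_R^2}\leq\frac{1+\zeta_R}{\zeta_R^2}\,a^3,
\]
the last step using $1+2\zeta_R\leq2(1+\zeta_R)$. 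For $\zeta_R<0$ (hence $0<a<1$) the factors $a-1$ and $(1+\zeta_R)a+\zeta_R-1$ are both negative, so $K_1$ and $K_2$ are genuinely nonnegative; writing $\abs{a-1}=1-a\leq1$ and, by the triangle inequality together with $a<1$, $\abs{(1+\zeta_R)a+\zeta_R-1}\leq\abs{1+\zeta_R}\,a+\abs{\zeta_R-1}\leq2(1+\abs{\zeta_R})$, one obtains
\[
K_1=\frac{a(1-a)}{\abs{\zeta_R}}\leq\frac{a}{\abs{\zeta_R}}\leq\frac{1+\abs{\zeta_R}}{\zeta_R^2}\,a,\qquad
K_2\leq\frac{a\cdot1\cdot2(1+\abs{\zeta_R})}{2\zeta_R^2}=\frac{1+\abs{\zeta_R}}{\zeta_R^2}\,a.
\]
Taking the maximum of the $K_1$- and $K_2$-bounds in each case gives the claim.

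There is no real obstacle here: the inequality is elementary and the bounds above are far from sharp. The only point that requires a little care is the sign bookkeeping in the case $\zeta_R<0$, where one should verify that both $a-1$ and $(1+\zeta_R)a+\zeta_R-1$ are negative on $0<a<1$ (an affine function of $a$ that is negative at $a=0$ and at $a=1$), so that $K_1$ and $K_2$ are nonnegative and the crude absolute-value estimates are legitimate; once this is noted the computation is immediate.
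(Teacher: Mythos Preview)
Your argument is correct: the substitution $a=e^{t\zeta_R}$ and the case split according to the sign of $\zeta_R$ reduce the claim to elementary algebraic inequalities, and your sign analysis in the case $\zeta_R<0$ (checking that the affine factor $(1+\zeta_R)a+\zeta_R-1$ is negative at both endpoints $a=0$ and $a=1$) is exactly what is needed. The paper does not give a proof of this corollary, stating only that it is standard and left to the reader; your write-up fills in precisely this routine verification.
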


Now we need to introduce a Gateaux type derivation along $H_R$. This notion of derivability was already considered in \cite{BF22} and it will be fundamental to prove a chain rule type result (Corollary \ref{accatena}).

\begin{defi}\label{defn_Gateaux}
Assume Hypothesis \ref{STR1}\eqref{STR1.1} holds true. We say that a function $\Phi:\X\ra\X$ is $H_R$-Gateaux differentiable if for every $x\in\X$ and $h\in H_R$ there exists $\eps_{x,h}>0$ such that the function $\varphi_{x,h}:(-\eps_{x,h},\eps_{x,h})\ra\X$ defined as 
\[\varphi_{x,h}(r):=\Phi(x+rh)-\Phi(x),\qquad x\in\X,\ h\in H_R,\ r\in(-\eps_{x,h},\eps_{x,h});\]
is $H_R$-valued and there exists $T_x\in \mathcal{L}(H_R)$ such that for every $h\in H_R$
\begin{align*}
\lim_{r\ra 0}\norm{\frac{1}{r}\varphi_{x,h}(r)-T_xh}_R=0.
\end{align*}
$T_x$ is called $H_R$-Gateaux derivative of $\Phi$ at the point $x\in \X$ and we denote it by $\J_{G,R}\Phi(x)$. For any $k\in\N$, in a similar way we can define a $k$-times $H_R$-Gateaux differentiable map, and we denote by $\J^k_{G,R}\Phi(x)$ the $H_R$-Gateaux derivative of $\Phi$ of order $k$ at $x\in\X$. We remark that $\J^k_{G,R}\Phi(x)$ belongs to $\mathcal{L}^{(k)}(H_R)$.
\end{defi}

The derivation introduced in Definition \ref{defn_Gateaux} is the correct type of derivation that we will need in the following result.

\begin{prop}\label{Hregbis}
Assume that Hypotheses \ref{STR2} hold true. For any $t>0$ the map $x\mapsto X(t,x)$ is three times $H_R$-Gateaux differentiable $\mathbb{P}$-a.e. and its first, second and third Gateaux derivative operators are $\J_GX(t,x)$, $\J^2_GX(t,x)$ and $\J^3_GX(t,x)$, respectively. Moreover for any $i=1,2,3$ and $t>0$ the map $\J^i_GX(t,\cdot):\X\ra \mathcal{L}^{(i)}(H_R)$ is uniformly continuous, almost everywhere with respect to $\mathbb{P}$.
\end{prop}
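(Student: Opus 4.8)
The plan is to transfer the $H_R$-valued estimates of Proposition \ref{Hreg} into an actual $H_R$-Gateaux differentiability statement, following the strategy already used in \cite{BF22} and \cite[Section 2]{DA5}. The starting point is that, by \cite[Section 2]{DA5}, for $\mathbb{P}$-a.e. $\omega$ the map $x\mapsto X(t,x)(\omega)$ is three times Gateaux differentiable \emph{as an $\X$-valued map}, with derivatives $\J_G X(t,x)$, $\J^2_G X(t,x)$, $\J^3_G X(t,x)$ solving the variation equations \eqref{eqdelta1}--\eqref{eqdelta3}; and by Proposition \ref{Hreg} these derivatives, when evaluated on directions $h,k,j\in H_R$, are $H_R$-valued and satisfy the bounds \eqref{1R}--\eqref{3R}. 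So the content of the proposition is essentially that the $\X$-Gateaux derivatives are in fact $H_R$-Gateaux derivatives, plus uniform continuity in $x$ with respect to the $\mathcal{L}^{(i)}(H_R)$-norm.

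First I would check the $H_R$-Gateaux differentiability itself. Fix $x\in\X$, $h\in H_R$, and $t>0$. For $r$ small set $\varphi_{x,h}(r):=X(t,x+rh)-X(t,x)$. Since both $x+rh$ and $x$ differ by the element $rh\in H_R$, one sees from the mild formula \eqref{sol_mild} (as in \cite[Section 2]{DA5}) that $\varphi_{x,h}(r)$ is $H_R$-valued. The already-known $\X$-Gateaux differentiability gives $r^{-1}\varphi_{x,h}(r)\to \J_G X(t,x)h$ in $\X$; to upgrade this to convergence in $H_R$ I would write, using the fundamental theorem of calculus in $H_R$ (legitimate because $s\mapsto X(t,x+sh)$ is $H_R$-valued and, by Proposition \ref{Hreg} applied along the segment, $H_R$-differentiable with $H_R$-continuous derivative),
\begin{align*}
\frac{1}{r}\varphi_{x,h}(r)-\J_G X(t,x)h=\frac{1}{r}\int_0^r\big(\J_G X(t,x+sh)h-\J_G X(t,x)h\big)\,ds,
\end{align*}
and then estimate the $H_R$-norm of the integrand by the $H_R$-continuity of $s\mapsto \J_G X(t,x+sh)h$, which itself follows from the variation equation \eqref{eqdelta1} and the dissipativity estimate \eqref{dissR}, exactly as in the proof of \eqref{1R}. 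The same scheme, bootstrapping through \eqref{eqdelta2} and \eqref{eqdelta3} and using the bounds \eqref{2R}, \eqref{3R}, handles the second and third derivatives: one differentiates the appropriate lower-order map along the segment and controls the remainder in $\mathcal{L}^{(i)}(H_R)$ using Proposition \ref{Hreg} and Corollary \ref{cappone}.

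For the uniform continuity of $x\mapsto \J^i_G X(t,x)$ in $\mathcal{L}^{(i)}(H_R)$, $\mathbb{P}$-a.e., I would argue by the same energy method applied to \emph{differences}. For $i=1$: given $x,y\in\X$ and $h\in H_R$, the process $t\mapsto \J_G X(t,x)h-\J_G X(t,y)h$ solves, in mild form, an equation of the type \eqref{eqdelta1} with an inhomogeneous term of the form $R\big(\J G(X(t,x))-\J G(X(t,y))\big)\J_G X(t,y)h$; scalarly multiplying by the difference in $H_R$ and using \eqref{dissR}, \eqref{1R}, Hypothesis \ref{STR1}\eqref{STR1.3}, together with \eqref{Lip_mild} and the uniform continuity of $\J G$, one gets a Gronwall-type bound showing $\|\J_G X(t,x)h-\J_G X(t,y)h\|_R$ is small uniformly in $\|h\|_R\le 1$ when $\|x-y\|$ is small (the modulus of continuity of $\J G$ composed with the Lipschitz bound \eqref{Lip_mild} does the job; the constants blow up in $t$ but this is allowed since $t$ is fixed). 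The cases $i=2,3$ are the same in spirit but combinatorially heavier: the difference of, say, $\delta^{h,k}_2$ at $x$ and at $y$ solves \eqref{eqdelta2} with several inhomogeneous terms, each of which is a product of factors that are either bounded in $H_R$ by \eqref{1R}--\eqref{2R} or small by the already-established continuity of the lower-order derivatives and of $X(t,\cdot)$; one again closes with a comparison/Gronwall argument. Throughout, the passage from strict to mild solutions is handled by the usual approximation by regular processes, as invoked in the proof of Proposition \ref{Hreg} (cf. \cite{Big21} or \cite[Proposition 6.2.2]{CER1}).

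The main obstacle, I expect, is the bookkeeping for $i=3$: writing down the difference equation for $\delta^{h,k,j}_3$ at two base points produces a long list of terms (mirroring the five terms in \eqref{eqdelta3}), and one must verify that each factor is controlled — some uniformly in $H_R$ via Proposition \ref{Hreg}, others via the inductively-proved uniform continuity of $\delta^h_1$ and $\delta^{h,k}_2$ in $x$ — so that the Gronwall estimate yields a genuine modulus of continuity independent of the unit directions $h,k,j\in H_R$. Once the structure for $i=1$ is set up cleanly, however, $i=2,3$ are routine, if tedious, repetitions of the same energy-estimate-plus-comparison pattern.
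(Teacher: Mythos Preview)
Your outline for uniform continuity is essentially sound and parallels the paper's argument: the paper also writes the difference $\delta_1^h(t,x_1)-\delta_1^h(t,x_2)$ via the mild form of \eqref{eqdelta1}, bounds the inhomogeneous term using the modulus of continuity of $\J G$ together with \eqref{Lip_mild} and \eqref{1R}, and closes with Gr\"onwall. You propose the energy/dissipativity route (multiply by the difference in $H_R$ and use \eqref{dissR}) instead of the mild-form route (use $\|e^{tA}\|_{\mathcal{L}(H_R)}\le Be^{\theta t}$); both work and give the same kind of bound.

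The differentiability step, however, has a gap. Your FTC representation
\[
\frac{1}{r}\varphi_{x,h}(r)-\J_G X(t,x)h=\frac{1}{r}\int_0^r\big(\J_G X(t,x+sh)h-\J_G X(t,x)h\big)\,ds
\]
is justified by appealing to ``Proposition \ref{Hreg} applied along the segment, $H_R$-differentiable with $H_R$-continuous derivative''. But Proposition \ref{Hreg} only asserts that $\J_G X(t,\cdot)h$ is $H_R$-valued and bounded in $H_R$; it says nothing about $H_R$-differentiability of $s\mapsto X(t,x+sh)$ or $H_R$-continuity of $s\mapsto \J_G X(t,x+sh)h$. To make your FTC argument rigorous you would first need the $H_R$-continuity of $x\mapsto \J_G X(t,x)h$ --- i.e.\ exactly the uniform-continuity statement you defer to the second half --- so the logical order of your proof is inverted. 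The paper avoids this by a direct estimate: subtracting the mild forms of \eqref{sol_mild} and \eqref{eqdelta1} and using that $\|e^{sA}Rz\|_R\le Be^{\theta s}\|z\|$ (from Hypotheses \ref{STR2} and \cite[Proposition I.5.5]{EN00}) gives
\[
\left\|\frac{X(t,x+rh)-X(t,x)}{r}-\J_G X(t,x)h\right\|_R
\le B\int_0^t e^{\theta s}\left\|\frac{G(X(s,x+rh))-G(X(s,x))}{r}-\J G(X(s,x))\J_G X(s,x)h\right\|\,ds,
\]
and the right-hand side tends to $0$ by the already-known $\X$-Gateaux differentiability and dominated convergence. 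This is cleaner: it reduces the $H_R$-convergence to an $\X$-convergence that is known from \cite{DA5}, with no need to first establish $H_R$-continuity of the derivative. If you want to keep your FTC scheme, you must explicitly prove the uniform-continuity part \emph{first} and then invoke it; as written, the argument is circular.
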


\begin{proof}
All estimates in this proof are meant almost everywhere with respect to $\mathbb{P}$. We prove the statements for the first $H_R$-Gateaux derivative, the proof for the second and the third $H_R$-Gateaux derivative operator are similar. We start by proving that, for any $t>0$, the map $x\mapsto X(t,x)$ is $H_R$-Gateaux differentiable $\mathbb{P}$-a.e. and its $H_R$-Gateaux derivative at $x\in\X$ in the direction $h\in H_R$ is $\J_GX(t,x)h$. We recall that by Hypotheses \ref{STR2} and \cite[Proposition I.5.5]{EN00} there exist $B\geq 1$ and $\theta\in\R$ such that
\begin{align}\label{prop_nagel}
\|e^{tA}\|_{\mathcal{L}(H_R)}\leq B e^{\theta t},\qquad t\geq 0.
\end{align} 
So by \eqref{sol_mild}, \eqref{eqdelta1} and \eqref{prop_nagel} it holds
\begin{align*}
\bigg\|\frac{X(t,x+rh)-X(t,x)}{r}&-\J_G X(t,x)h\bigg\|_R\\
&\!\!\!\!\!\!\!\!\!\!\!\!\!\!\!\!\!\!\!\!\!\!\!\!\leq B\int^t_0 e^{\theta s}\norm{\dfrac{G(X(s,x+rh))-G(X(s,x))}{r}-\J G(X(s,x))\J_G X(s,x)h}ds.
\end{align*}
Hence, by the fact that, for every $t>0$, the map $x\mapsto X(t,x)$ is Gateaux differentiable and the fact that $G$ is a three times Fr\'echet differentiable function with bounded and continuous derivative operators, we can apply the dominated convergence theorem to obtain for any $x\in\X$ and $h\in H_R$
\[
\lim_{r\ra 0}\norm{\dfrac{X(t,x+rh)-X(t,x)}{r}-\J_G X(t,x)h}_R=0.
\]
So, for any $t>0$, the map $x\mapsto X(t,x)$ is $H_R$-Gateaux differentiable (in the sense of Definition \ref{defn_Gateaux}) and its $H_R$-Gateaux derivative operator is $\J_G X(t,x)$.

Now we show that, for any $t>0$, the map $x\mapsto \D_GX(t,x)$ is uniformly continuous. By Hypothesis \ref{STR1}\eqref{STR1.3} the function $\D G:\X\ra\mathcal{L}(\X)$ is uniformly continuous, namely there exists a continuous and increasing function $\omega:[0,+\infty)\ra[0,+\infty)$ such that $\lim_{s\ra 0^+}\omega(s)=0$ and 
\begin{align*}
\|\D G(x_1)-\D G(x_2)\|_{\mathcal{L}(\X)}\leq \omega(\|x_1-x_2\|),\qquad x_1,x_2\in\X.
\end{align*}
We claim that, for every $T>0$ and $t\in[0,T]$, the map $x\mapsto \D G(X(t,x))$ is uniformly continuous with a modulus of continuity independent of $t$. Indeed, by \eqref{Lip_mild}, for any $x_1,x_2\in\X$
\begin{align}
\sup_{t\in[0,T]}\|\D G(X(t,x_1))-\D G(X(t,x_2))\|_{\mathcal{L}(\X)}&\leq \sup_{t\in[0,T]}\omega(\|X(t,x_1)-X(t,x_2)\|)\notag\\
&\leq \omega(\eta \|x_1-x_2\|),\label{unif_cont_DGX}
\end{align}
where $\eta$ is the constant appearing in \eqref{Lip_mild}. For simplicity sake we set $\delta_1^h(t,x):=\D_GX(t,x)h$ for every $t>0$, $h\in H_R$ and $x\in\X$. Let $h\in H_R$, $x_1,x_2\in\X$ and $T>0$. By Hypotheses \ref{STR2}, Proposition \ref{accar} and \eqref{eqdelta1} for every $t\in [0,T]$ we have
\begin{align}
\|\delta^h_1(t,x_1)&-\delta^h_1(t,x_2)\|_R\notag\\
&\leq \int^t_0\|e^{sA}\|_{\mathcal{L}(H_R)}\|\J G(X(s,x_1))\delta^h_1(s,x_1)-\J G(X(s,x_2))\delta^h_1(s,x_2)\|ds\notag\\
&\leq \int^t_0\|e^{sA}\|_{\mathcal{L}(H_R)}\|\J G(X(s,x_1))-\J G(X(s,x_2))\|_{\mathcal{L}(\X)}\|\delta^h_1(s,x_1)\|ds\notag\\
&\qquad+\int^t_0\|e^{sA}\|_{\mathcal{L}(H_R)}\|\J G(X(s,x_2))\|_{\mathcal{L}(\X)}\|\delta^h_1(s,x_1)-\delta^h_1(s,x_2)\|ds\notag\\
&=:I_1+I_2.\label{Striscia1}
\end{align}
By Proposition \ref{accar}, \eqref{1R}, \eqref{prop_nagel} and \eqref{unif_cont_DGX} it holds
\begin{align}
I_1 
&\leq B\|R\|_{\mathcal{L}(\X)}\|h\|_R \omega(\eta\|x_1-x_2\|)\int_0^T e^{(\theta+\zeta_R) s} ds\notag\\
&=\frac{B\|R\|_{\mathcal{L}(\X)}(e^{(\theta+\zeta_R)T}-1)}{\theta+\zeta_R}\|h\|_R \omega(\eta\|x_1-x_2\|).\label{Striscia2}
\end{align}
By Hypothesis \ref{STR1}\eqref{STR1.3}, Proposition \ref{accar} and \eqref{prop_nagel} it holds
\begin{align}
I_2 
&\leq BM\|R\|_{\mathcal{L}(\X)}\int_0^T e^{\theta s} \|\delta^h_1(s,x_1)-\delta^h_1(s,x_2)\|_Rds. \label{Striscia3}
\end{align}
Combining \eqref{Striscia1}, \eqref{Striscia2} and \eqref{Striscia3} we get
\begin{align*}
\|\delta^h_1(t,x_1)-\delta^h_1(t,x_2)\|_R &\leq \frac{B\|R\|_{\mathcal{L}(\X)}(e^{(\theta+\zeta_R)T}-1)}{\theta+\zeta_R}\|h\|_R \omega(\eta\|x_1-x_2\|)\\
&\qquad + BM\|R\|_{\mathcal{L}(\X)}\int_0^T e^{\theta s} \|\delta^h_1(s,x_1)-\delta^h_1(s,x_2)\|_Rds.
\end{align*}
By the Gr\"onwall inequality there exists a positive constant $\Lambda(T)$ independent on $h\in H_R$ and on $x_1,x_2\in\X$ such that
\begin{align}\label{UC}
\|\delta^h_1(t,x_1)-\delta^h_1(t,x_2)\|_R &\leq \Lambda(T)\|h\|_R \omega(\eta\|x_1-x_2\|)
\end{align}
%
Hence by \eqref{UC} we conclude that, for every $t>0$, the function $\J_GX(t,\cdot):\X\ra \mathcal{L}(H_R)$ is uniformly continuous. The proof of the uniform continuity of the second and third $H_R$-Gateaux derivatives are similar.
\end{proof}

We end this section by stating a chain rule type result which will be used later in the paper. The proof is standard and can be obtained following the same arguments used in the proof of \cite[Corollary 21]{BF20}. We give it for the sake of completeness.

\begin{cor}\label{accatena}
Assume that Hypotheses \ref{STR2} hold true. If $g\in C^1_{b}(\X)$, then for any $t> 0$ the map $x\mapsto (g\circ X)(t,x)$ is $H_R$-differentiable. Furthermore for any $h\in H_{R}$, $x\in X$ and $t> 0$
it holds 
\[(\J_{R}(g\circ X))(t,x)h=(\D_{R}g) (X(t,x)) \J_G X(t,x)h.\]
\end{cor}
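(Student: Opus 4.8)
The plan is to combine the $H_R$-Gateaux differentiability of $x\mapsto X(t,x)$ (Proposition \ref{Hregbis}) with the $H_R$-differentiability of $g$ (which follows from Proposition \ref{dalpha}, since $g\in C^1_b(\X)$ is Fréchet differentiable with continuous derivative), and then upgrade the resulting $H_R$-Gateaux differentiability of $g\circ X$ to genuine $H_R$-differentiability using a uniform-continuity argument. First I would fix $t>0$ and compute, for $x\in\X$ and $h\in H_R$, the incremental quotient
\[
\frac{(g\circ X)(t,x+h)-(g\circ X)(t,x)}{\,\cdot\,},
\]
writing $g(X(t,x+h))-g(X(t,x))$ via the fundamental theorem of calculus along the segment joining $X(t,x)$ to $X(t,x+h)$ — both endpoints differ by an element of $H_R$ by Proposition \ref{Hregbis}, so the segment lies in an $H_R$-coset and the integrand involves only $\J_R g$ evaluated along that segment. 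This gives
\[
g(X(t,x+h))-g(X(t,x))=\int_0^1 \J_R g\big(X(t,x)+\sigma(X(t,x+h)-X(t,x))\big)\big(X(t,x+h)-X(t,x)\big)\,d\sigma.
\]

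Next I would subtract the candidate derivative $(\J_R g)(X(t,x))\,\J_G X(t,x)h$ and split the difference into two pieces: one controlled by $\|X(t,x+h)-X(t,x)-\J_G X(t,x)h\|_R$ (which is $o(\|h\|_R)$ by the $H_R$-Gateaux differentiability from Proposition \ref{Hregbis}, applied along $h$ after rescaling — here one uses that the directional limit is actually the Gateaux derivative operator $\J_G X(t,x)$), and one controlled by the modulus of continuity of $\J_R g$ together with the Lipschitz bound \eqref{Lip_mild} on $x\mapsto X(t,x)$ and the bound $\|h\|\le\|R\|_{\mathcal L(\X)}\|h\|_R$ from Proposition \ref{accar}, so that $\|X(t,x)+\sigma(X(t,x+h)-X(t,x))-X(t,x)\|\to 0$ as $\|h\|_R\to 0$ uniformly in $\sigma$. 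Since $\J_R g$ is continuous (indeed bounded, by $g\in C^1_b$ and Proposition \ref{dalpha} giving $\nabla_R g=R^2\nabla g$), both pieces vanish faster than $\|h\|_R$, which yields $H_R$-differentiability with derivative operator $h\mapsto (\D_R g)(X(t,x))\J_G X(t,x)h$, i.e. the claimed formula.

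The main obstacle I anticipate is the bookkeeping around the two notions of differentiability: Proposition \ref{Hregbis} only gives $H_R$-\emph{Gateaux} differentiability of $X(t,\cdot)$, so the error term $\|X(t,x+h)-X(t,x)-\J_G X(t,x)h\|_R$ is $o(r)$ along each \emph{fixed} direction after writing $h=r\cdot(h/\|h\|_R)$, but not obviously uniform enough on its own. The fix is that the composition with $g$ only requires control of the norm increment, and the genuine (Fréchet-type) $H_R$-differentiability of the composite is recovered because $\J_R g$ is uniformly continuous on the relevant bounded set while $X(t,\cdot)$ is globally Lipschitz; combined with the fact that the Gateaux derivative operator $\J_G X(t,x)\in\mathcal L(H_R)$ is bounded (again via the segment/Gronwall estimates underlying Proposition \ref{Hregbis}), this is enough. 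I would remark that this is exactly the argument of \cite[Corollary 21]{BF20}, so after setting up the two displays above the remaining estimates are routine and can be dispatched quickly.
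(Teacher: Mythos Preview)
Your route is genuinely different from the paper's. The paper does not use the integral representation of $g(X(t,x+h))-g(X(t,x))$; instead it fixes a direction $h$, works with the scaled increment $\eps h$, introduces the remainder $K_\eps(t,x,h):=X(t,x+\eps h)-X(t,x)-\eps\,\J_G X(t,x)h$, and controls $\E\big[\|K_\eps\|_R^2\big]=o(\eps)$ via Proposition~\ref{Hreg}. It then Taylor-expands $g$ in $H_R$ around $X(t,x)$ and passes to the limit in $L^2(\Omega,\mathbb P)$. So the paper really proves a Gateaux-type statement along each fixed $h$ and in mean square, whereas you are aiming at the full Fr\'echet-type $H_R$-limit pathwise --- which is what Definition~\ref{intr_defn_diff} literally asks for.

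There is, however, a gap in your ``fix'' for the first piece. After splitting, the term $(\J_R g)(X(t,x))\big[X(t,x+h)-X(t,x)-\J_G X(t,x)h\big]$ is bounded by $\|\J_R g\|_\infty\cdot\|X(t,x+h)-X(t,x)-\J_G X(t,x)h\|_R$, and you need this last $H_R$-norm to be $o(\|h\|_R)$ \emph{uniformly in the direction of $h$}. Neither the boundedness of $\J_G X(t,x)\in\mathcal L(H_R)$ nor the uniform continuity of $\J_R g$ nor the $\X$-Lipschitz estimate~\eqref{Lip_mild} gives you that --- those ingredients handle your second piece, not the first. The right tool is the second half of Proposition~\ref{Hregbis}: the map $x\mapsto\J_G X(t,x)\in\mathcal L(H_R)$ is uniformly continuous (in the $\X$-norm). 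With it, a second application of the fundamental theorem of calculus yields
\[
X(t,x+h)-X(t,x)-\J_G X(t,x)h=\int_0^1\big[\J_G X(t,x+\sigma h)-\J_G X(t,x)\big]h\,d\sigma,
\]
and hence $\|X(t,x+h)-X(t,x)-\J_G X(t,x)h\|_R\le\omega_{\J_G X}\big(\|R\|_{\mathcal L(\X)}\|h\|_R\big)\,\|h\|_R$, which \emph{is} $o(\|h\|_R)$. Once you invoke this, your scheme goes through and in fact delivers the Fr\'echet-type conclusion cleanly; you should cite the uniform continuity of $\J_G X(t,\cdot)$ explicitly rather than appealing only to its boundedness.
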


\begin{proof}
Since $g\in C^1_{b}(\X)$, by Proposition \ref{dalpha}, $g$ is also $H_R$-differentiable, then for every $x\in\X$ and $h\in H_{R}$
\[g(x+\eps h)=g(x)+\eps \D_{R}g(x)h+o(\eps),\qquad \text{ for }\eps\ra 0.   \]
We define for any $x\in\X$, $h\in H_{R}$, $t> 0$ and $\eps>0$
\[K_{\eps}(t,x,h):=X(t,x+\eps h)-X(t,x)-\eps\J_G X(t,x)h.\]
By Proposition \ref{Hreg}, for any $T>0$, $x\in\X$ and $h\in H_R$
\begin{align}\label{LOSTSOUL}
\sup_{s\in [0,T]}\E[\|K_{\eps}(s,x,h)\|^2_{R}]=o(\eps)\qquad \text{ for }\eps\ra 0.
\end{align}
Hence, letting $\eps$ tend to zero, for any $x\in\X$, $h\in H_{R}$ and $t> 0$ it holds
\begin{align*}
g\big(X(t, x+\eps h)\big)& =g\big(X(t,x)+\eps\J_G X(t,x)h+K_{\eps}(t,x,h)\big)\\
& =g\big(X(t,x)+\eps(\J_G X(t,x)h+\eps^{-1}K_{\eps}(t,x,h)\big)\\
& =g(X(t,x))+(\D_{R}g)(X(t,x))(\eps\J_G X(t,x)h+K_{\eps}(t,x,h))+o(\eps).
\end{align*}
So, letting $\eps$ tend to zero, by the boundedness of the $H_R$-derivative operator of $g$ (Proposition \ref{dalpha}) and \eqref{LOSTSOUL} we get, for any $x\in\X$, $h\in H_{R}$ and $T>t>0$ 
\begin{align*}
0&\leq \E\sq{\abs{g\big(X(t,x+\eps h)\big)-g(X(t,x))-\eps(\D_{R}g)(X(t,x))\J_G X(t,x)h}^2}\\
&\leq \sup_{s\in[0,T]}\E\sq{\abs{g\big(X(s,x+\eps h)\big)-g(X(s,x))-\eps(\D_{R}g)(X(s,x))\J_G X(s,x)h}^2}\\
&= \sup_{s\in[0,T]}\E\Big[\abs{(\D_{R}g)(X(s,x))K_{\eps}(s,x,h)}^2\Big]+o(\eps)\\
&\leq \pa{\sup_{y\in\X}\norm{\J_R g(y)}_{\mathcal{L}(H_R;\R)}}\pa{\sup_{s\in[0,T]}\E\Big[\norm{K_{\eps}(s,x,h)}_R^2\Big]}+o(\eps)\\
&=\left(1+\sup_{y\in\X}\norm{\J_R g(y)}_{\mathcal{L}(H_R;\R)}\right)o(\eps)
\end{align*}
This concludes the proof. 
\end{proof}

\section{$H_R$ regularity of the transition semigroup $\{P(t)\}_{t\geq 0}$}\label{Regularity_Semigroup}

In this section we will show that the transition semigroup, introduced in \eqref{intr_trans}, satisfies some regularity properties in the sense of Definition \ref{intr_defn_diff}. The next proposition is a variant of the Bismut--Elworthy--Li formula (see \cite{EL-LI1}) adapted to our purposes.

\begin{thm}\label{BEL}
Assume that Hypotheses \ref{STR2} hold true. If $\varphi\in {\rm BUC}(\X)$, then for any $t>0$ the map $x\mapsto P(t)\varphi(x)$ belongs to ${\rm BUC}^3_R(\X)$. Moreover for any $x\in\X$, $h,k,j\in H_R$ and $t> 0$ it holds
\begin{align}\label{FDeq1}
\D_R P(t)\varphi(x)h &=\frac{1}{t}\mathbb{E}\left[\varphi(X(t,x))\int_0^t\scal{\J_G X(s,x)h}{RdW(s)}_{R}\right];\\
\D^2_R P(t)\varphi(x)(h,k) &=\frac{2}{t}\mathbb{E}\left[\D_R\left( P(t/2)\varphi(X(t/2,x))\right)k\int_0^{t/2}\scal{\J_G X(s,x)h}{RdW(s)}_{R}\right]\notag\\
&\quad +\frac{2}{t}\mathbb{E}\left[P(t/2)\varphi(X(t/2,x))\int_0^{t/2}\scal{\J^2_G X(s,x)(h,k)}{RdW(s)}_{R}\right];\label{FDeq2}\\
\D^3_R P(t)\varphi(x)(h,k,j)&=\frac{2}{t}\mathbb{E}\left[\D^2_R\left( P(t/2)\varphi(X(t/2,x))\right)(k,j)\int_0^{t/2}\scal{\J_G X(s,x)h}{RdW(s)}_{R}\right]\notag\\
&\quad +\frac{2}{t}\mathbb{E}\left[\D_R\left( P(t/2)\varphi(X(t/2,x))\right)k\int_0^{t/2}\scal{\J^2_G X(s,x)(h,j)}{RdW(s)}_{R}\right]\notag\\
&\quad +\frac{2}{t}\mathbb{E}\left[\D_R\left( P(t/2)\varphi(X(t/2,x))\right)j\int_0^{t/2}\scal{\J^2_G X(s,x)(h,k)}{RdW(s)}_{R}\right]\notag\\
&\quad +\frac{2}{t}\mathbb{E}\left[P(t/2)\varphi(X(t/2,x))\int_0^{t/2}\scal{\J^3_G X(s,x)(h,k,j)}{RdW(s)}_{R}\right].\label{FDeq3}
\end{align}
\end{thm}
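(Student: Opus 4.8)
The plan is to prove the three identities in order, each building on the previous one via the semigroup law $P(t) = P(t/2)P(t/2)$, and to use a standard approximation argument to reduce to smooth $\varphi$. First I would assume $\varphi \in C^1_b(\X)$ (or even smoother); the general case $\varphi \in {\rm BUC}(\X)$ follows afterwards by approximating $\varphi$ uniformly by smooth cylindrical functions, checking that the right-hand sides of \eqref{FDeq1}--\eqref{FDeq3} pass to the limit using the bounds on the processes $\{\delta^h_i(t,x)\}$ from Proposition \ref{Hreg} together with the It\^o isometry in $H_R$ (the stochastic integrals $\int_0^t \scal{\J_G X(s,x)h}{RdW(s)}_R$ have second moments controlled by $\int_0^t e^{2\zeta_R s}\|h\|_R^2\,ds$ and similar, via \eqref{1R}, \eqref{2R}, \eqref{3R}), and that the limiting function is $H_R$-differentiable with the stated derivatives. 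Once the formulae are established, the membership $P(t)\varphi \in {\rm BUC}^3_R(\X)$ follows: boundedness of the derivatives comes from the moment bounds just mentioned (the factor $1/t$ and $2/t$ being harmless for fixed $t$), and uniform continuity in $x$ comes from the uniform continuity of $x \mapsto \J^i_G X(t,x)$ in $\mathcal L^{(i)}(H_R)$ established in Proposition \ref{Hregbis} (almost surely, with a modulus independent of the relevant variables), combined with \eqref{Lip_mild} and dominated convergence.

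For \eqref{FDeq1}, the core computation is the classical Bismut--Elworthy--Li argument adapted to the $H_R$-geometry. Using Corollary \ref{accatena}, the map $x \mapsto P(t)\varphi(x) = \E[\varphi(X(t,x))]$ is $H_R$-differentiable with $\D_R P(t)\varphi(x)h = \E[(\D_R\varphi)(X(t,x))\J_G X(t,x)h]$ when $\varphi$ is $C^1_b$. I would then introduce the family $\Psi(s) := \E[(P(t-s)\varphi)(X(s,x))\,M(s)]$ for a suitable $H_R$-valued martingale weight $M(s)$ with $M(s) = \int_0^s \scal{\J_G X(r,x)h}{RdW(r)}_R$, differentiate in $s$, and show the drift terms cancel: the generator of $X$ acting on $(P(t-s)\varphi)$ cancels the $\partial_s$ term (Kolmogorov backward equation), while the It\^o correction coming from $d\langle (P(t-s)\varphi)(X(\cdot,x)), M(\cdot)\rangle_s$ reproduces exactly $\E[(\D_R\varphi)(X(s,x))\J_G X(s,x)h]$ — here is where the directional derivative $\delta^h_1$ solving \eqref{eqdelta1} and the fact that the noise acts through $R$ (so that $\scal{RdW}{\cdot}_R$ pairs correctly with $\D_R$) are used, and also where Proposition \ref{dalpha} (the identity $\nabla_R = R^2 \nabla$) enters to match the Fr\'echet and $H_R$ pictures. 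Integrating from $s=0$ to $s=t$ and dividing by $t$ yields \eqref{FDeq1}. The remark after Theorem \ref{THM_SCHAUDER} signals that the passage ``from \eqref{raaaa} to \eqref{raaaa2}'' — presumably exactly this step identifying the It\^o bracket with the $H_R$-derivative — relies on injectivity of $R$ via Proposition \ref{dalpha}; so I would be careful there.

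For \eqref{FDeq2} and \eqref{FDeq3}, I would not repeat the martingale argument but instead iterate: write $P(t)\varphi(x) = P(t/2)\big(P(t/2)\varphi\big)(x)$, apply \eqref{FDeq1} with $\varphi$ replaced by $\psi := P(t/2)\varphi$ and $t$ replaced by $t/2$, to get
\begin{align*}
\D_R P(t)\varphi(x)h = \frac{2}{t}\E\Big[\big(P(t/2)\varphi\big)(X(t/2,x))\int_0^{t/2}\scal{\J_G X(s,x)h}{RdW(s)}_R\Big],
\end{align*}
and then differentiate this expression once more in the direction $k$ (respectively twice more, in $k$ and $j$). Differentiating under the expectation is justified by the uniform bounds of Proposition \ref{Hreg}; the product rule produces two groups of terms — one where the derivative hits $\big(P(t/2)\varphi\big)(X(t/2,x))$, giving $\D_R\big(P(t/2)\varphi\big)(X(t/2,x))\,\delta^k_1$, and one where it hits the stochastic integral, where differentiating $\J_G X(s,x)h$ in direction $k$ yields $\J^2_G X(s,x)(h,k)$ (and, by the chain rule, $\D_R\big(P(t/2)\varphi\big)(X(t/2,x))$ itself must be expanded using Corollary \ref{accatena} again to keep everything in $H_R$). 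Collecting terms gives \eqref{FDeq2}, and one further differentiation — now also expanding $\D^2_R\big(P(t/2)\varphi\big)$ and the second-order directional process $\delta^{h,k}_2$ from \eqref{eqdelta2} — gives \eqref{FDeq3}. The main obstacle I expect is the rigorous justification of differentiating inside the expectation and the stochastic integral at each stage, i.e. establishing $L^2$-convergence of the difference quotients: this requires the a priori estimates \eqref{1R}--\eqref{3R}, the uniform continuity from Proposition \ref{Hregbis}, and care with the It\^o isometry in the Hilbert space $H_R$; the bookkeeping of which directional-derivative process multiplies which weight in the third-order formula is the most error-prone part, though conceptually routine once \eqref{FDeq1} and \eqref{FDeq2} are in hand.
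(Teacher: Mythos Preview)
Your proposal is correct and follows essentially the same route as the paper. The only cosmetic difference is in the derivation of \eqref{FDeq1}: rather than differentiating $\Psi(s)=\E[(P(t-s)\varphi)(X(s,x))M(s)]$ in $s$ and invoking the backward Kolmogorov equation, the paper quotes the martingale representation of Peszat--Zabczyk, $\varphi(X(t,x))=P(t)\varphi(x)+\int_0^t\langle\nabla P(t-s)\varphi(X(s,x)),RdW(s)\rangle$, converts it to the $H_R$-picture via Proposition~\ref{dalpha} (this is precisely the \eqref{raaaa}$\to$\eqref{raaaa2} step you flagged), multiplies by the martingale $M(t)=\int_0^t\langle\J_GX(s,x)h,RdW(s)\rangle_R$, and computes the resulting expectation by polarization and the It\^o isometry; the chain rule of Corollary~\ref{accatena} then identifies the answer with $t\,\D_RP(t)\varphi(x)h$. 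This is the same computation you describe, just packaged so that the delicate ``differentiate in $s$'' step is absorbed into a cited lemma. (One small slip in your write-up: the It\^o correction produces $\langle\nabla_R(P(t-s)\varphi)(X(s,x)),\J_GX(s,x)h\rangle_R$, not $(\D_R\varphi)(X(s,x))$; it is only after the chain rule and the semigroup identity $P(s)P(t-s)=P(t)$ that this becomes the constant $\D_RP(t)\varphi(x)h$.) The iteration for \eqref{FDeq2}--\eqref{FDeq3} via $P(t)=P(t/2)P(t/2)$ and the approximation/uniform-continuity arguments are exactly as in the paper.
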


\begin{proof}
Observe that once we prove that \eqref{FDeq1}, \eqref{FDeq2} and \eqref{FDeq3} hold true, then the uniform continuity follows by \eqref{1R}, \eqref{2R}, \eqref{3R} and Proposition \ref{Zone400} (with $\mathcal{M}=\X$, $d_{\mathcal{M}}=\norm{\cdot}$, $Y=\Omega$, $\mu=\mathbb{P}$ and $Z=\R$). By \cite[Lemma 2.3]{PES-ZA1}, for any $\varphi\in {\rm BUC}^2(\X)$, $t>0$ and $x\in\X$ it holds
\begin{equation}\label{raaaa}
\varphi(X(t,x))=P(t)\varphi(x)+\int_0^t\scal{\nabla P(t-s)\varphi(X(s,x))}{RdW(s)},
\end{equation}
and $P(t)\varphi$ belongs to ${\rm BUC}^2(\X)$. By Proposition \ref{dalpha} $P(t)\varphi$ is $H_R$-differentiable and \eqref{raaaa} becomes for $t>0$ and $x\in\X$
\begin{equation}\label{raaaa2}
\varphi(X(t,x))=P(t)\varphi(x)+\int_0^t\scal{\nabla_R P(t-s)\varphi(X(s,x))}{RdW(s)}_R.
\end{equation}
By Proposition \ref{Hreg}, for any $h\in H_R$, the following process is well defined 
\begin{equation}\label{marti}
\left\lbrace\int_0^t\scal{\J_G X(s,x)h}{RdW(s)}_{R}\right\rbrace_{t\geq 0}
\end{equation}
Multiplying both sides of \eqref{raaaa2} by \eqref{marti} and taking the expectations we get for $t>0$, $x\in\X$ and $h\in H_R$
\begin{align*}
\mathbb{E}\bigg[\varphi(X(t,x))&\int_0^t \scal{\J_G X(s,x)h}{RdW(s)}_{R}\bigg]\\ 
&=\mathbb{E}\sq{P(t)\varphi(x)\int_0^t\scal{\J_G X(s,x)h}{RdW(s)}_{R}}\\
&\qquad +\mathbb{E}\sq{\int_0^t\scal{\nabla_R P(t-s)\varphi(X(s,x))}{RdW(s)}_{R}\int_0^t\scal{\J_G X(s,x)h}{RdW(s)}_{R}}.
\end{align*}
Since $R:\X\ra H_R$ is continuous (Proposition \ref{accar}), then $\{RW(t)\}_{t\geq 0}$ is a $H_R$-cylindrical Wiener process (see \cite[Remark 5.1]{DA-ZA4}).
By Proposition \ref{Hreg} for every $t>0$, $x\in\X$ and $h\in H_R$
\[
\int_0^t\mathbb{E}\big[\Vert \J_G X(s,x)h\Vert_{R}^2\big]ds<+\infty,
\] 
so, by \cite[Remark 2]{EL-LI1}, the process defined in \eqref{marti} is a martingale. Hence for any $t> 0$, $x\in\X$ and $h\in H_R$
\[
\mathbb{E}\sq{P(t)\varphi(x)\int_0^t\scal{\J_G X(s,x)h}{RdW(s)}_{R}}=0.
\]
We recall that since $L^2(\Omega,\mathbb{P})$ is a Hilbert space, then the polarization identity holds true, namely for every $\xi_1,\xi_2\in L^2(\Omega,\mathbb{P})$ it holds
\begin{align}\label{cold}
\E\sq{\xi_1 \xi_2}=\frac{1}{4}\E\big[|\xi_1+\xi_2|^2\big]-\frac{1}{4}\E\big[|\xi_1-\xi_2|^2\big].
\end{align}
Let $\Phi(s):=\D_{R} P(t-s)\varphi(X(s,x))$ and $\Gamma(s):=\J_G X(s,x)h$. Now we apply \eqref{cold} with $\xi_1=\int_0^t\scal{\Phi(s)}{RdW(s)}_{R}$ and $\xi_2=\int_0^t\scal{\Gamma(s)}{RdW(s)}_{R}$ and using the It\^o isometry we get
\begin{align}
\mathbb{E}\bigg[\int_0^t\scal{\Phi}{RdW}_{R}&\int_0^t\scal{\Gamma}{RdW}_{R}\bigg]\notag\\
=\frac{1}{4}&\mathbb{E}\left[\left(\int_0^t\scal{\Phi+\Gamma}{RdW}_{R}\right)^2\right]-\frac{1}{4}\mathbb{E}\left[\left(\int_0^t\scal{\Phi-\Gamma}{RdW}_{R}\right)^2\right]\notag\\
=\frac{1}{4}&\mathbb{E}\left[\int_0^t\norm{\Phi+\Gamma}_R^2ds\right]-\frac{1}{4}\mathbb{E}\left[\int_0^t\norm{\Phi-\Gamma}_R^2ds\right]=\mathbb{E}\left[\int_0^t\scal{\Phi}{\Gamma}_R ds\right]\label{pola}.
\end{align}
Recalling that $P(t)\varphi$ is Fr\'echet differentiable, then by Proposition \ref{dalpha}, Corollary \ref{accatena} (with $g=P(t-s)\varphi$) and \eqref{pola} we obtain for $t>0$, $x\in\X$ and $h\in H_R$
\begin{align*}
&\mathbb{E}\bigg[\int_0^t\scal{(\nabla_{R} P(t-s)\varphi)(X(s,x))}{RdW(s)}_{R}\int_0^t\scal{\J_G X(s,x)h}{RdW(s)}_{R}\bigg]\\
&=\mathbb{E}\sq{\int_0^t\scal{(\nabla_{R} P(t-s)\varphi)(X(s,x))}{\J_G X(s,x)h}_{R}ds}\\
&=\mathbb{E}\bigg[\int_0^t \J_R(((P(t-s)\varphi)\circ X)(s,x))h ds\bigg]=\int_0^t \J_{R}\mathbb{E}\big[(P(t-s)\varphi\circ X)(s,x)\big]hds.
\end{align*}
By the very definition of $P(t)$ we know that $\mathbb{E}[(P(t-s)\varphi\circ X)(s,x)]=(P(s)P(t-s)\varphi)(x)=P(t)\varphi(x)$. So we conclude for $t>0$, $x\in\X$ and $h\in H_R$
\begin{align*}
\mathbb{E}\sq{\varphi(X(t,x))\int_0^t\scal{\J_G X(s,x)h}{RdW(s)}_{R}} &=\int_0^t\D_{R} P(t)\varphi(x)hds=t\D_{R} P(t)\varphi(x)h.
\end{align*}
We have proved \eqref{FDeq1} for any $\varphi\in {\rm BUC}^2(\X)$. Using the same approximation arguments of the proof of \cite[Proposition 4.4.3]{CER1} it is possible to show that \eqref{FDeq1} is verified for any $\varphi\in {\rm BUC}(\X)$. So by Proposition \ref{Hregbis} and \eqref{FDeq1}, for any $t>0$ and $\varphi\in {\rm BUC}(\X)$ the function $P(t)\varphi$ belongs to ${\rm BUC}^1_R(\X)$. To prove \eqref{FDeq2} observe that for every $x\in\X$, $h,k\in H_R$ and $t> 0$ it holds
\begin{align*}
\D^2_R P(t)\varphi(x)(h,k)=\D_R\left(\D_R P(t)\varphi(x)h\right)k=\D_R\left(\D_R P(t/2)(P(t/2)\varphi(x))h\right)k.
\end{align*}
So by \eqref{FDeq1}
\begin{align}
\D^2_R P(t)\varphi(x)(h,k)&=\J_R\left(\frac{2}{t}\mathbb{E}\left[\left( P(t/2)\varphi(X(t/2,x))\right)\int_0^{t/2}\scal{\J_G X(s,x)h}{RdW(s)}_{R}\right]\right)k\notag\\
&=\frac{2}{t}\mathbb{E}\left[\J_R\left(\left( P(t/2)\varphi(X(t/2,x))\right)\int_0^{t/2}\scal{\J_G X(s,x)h}{RdW(s)}_{R}\right)k\right].\label{speri}
\end{align}
So, differentiating the product in \eqref{speri}, we obtain \eqref{FDeq2}. By Proposition \ref{Hregbis} and \eqref{FDeq2}, for any $\varphi\in {\rm BUC}(\X)$ the function $P(t)\varphi$ belongs to ${\rm BUC}^2_R(\X)$. \eqref{FDeq3} follows by the same arguments.
\end{proof}

Theorem \ref{BEL} allows us to prove some estimates that are useful in general and extremely important for our results. 

\begin{prop}\label{boss}
Assume that Hypotheses \ref{STR2} hold true. There exists a positive constant $K$, depending only on $M_2$, $M_3$ and $\zeta_R$ (the constants appearing in Proposition \ref{Hreg} and in Hypotheses \ref{STR2}), such that for any $i=1,2,3$, $\varphi\in {\rm BUC}(\X)$, $t> 0$ and $x\in\X$ it holds
\begin{align*}
\|\D^i_R P(t)\varphi(x)\|_{\mathcal{L}^{(i)}(H_R;\R)} \leq \frac{K}{t^{i/2}}\max\left\{1,\frac{t^{i-1}}{1+\abs{\zeta_R}t^{i-1}},e^{3t\zeta_R}\right\}\Vert\varphi\Vert_{\infty}.
\end{align*}
In particular for any $i=1,2,3$, $\varphi\in {\rm BUC}(\X)$, $t> 0$ and $x\in\X$ it holds
\begin{align}\label{boss1}
\|\D^i_R P(t)\varphi(x)\|_{\mathcal{L}^{(i)}(H_R;\R)} \leq K\frac{e^{3t|\zeta_R|}}{t^{i/2}}\Vert\varphi\Vert_{\infty}.
\end{align}
Furthermore, for any $\alpha\in(0,1)$, $t>0$ and $\varphi\in {\rm BUC}(\X)$ it holds
\begin{align}
[\D_R P(t)\varphi]_{{\rm BUC}_R^\alpha(\X;\mathcal{L}(H_R;\R))} &\leq 2^{1-\alpha}K\frac{e^{3t|\zeta_R|}}{t^{(1+\alpha)/2}}\Vert\varphi\Vert_{\infty}.\label{Resident3}
\end{align}
\end{prop}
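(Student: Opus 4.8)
The plan is to bound the three expectations appearing in \eqref{FDeq1}, \eqref{FDeq2}, \eqref{FDeq3} by combining the sup-bound on $\varphi$ (or on the lower-order derivatives $\D^j_R P(t/2)\varphi$ already controlled), the It\^o isometry for the $H_R$-cylindrical Wiener process $\{RW(t)\}_{t\ge 0}$, and the uniform estimates \eqref{1R}, \eqref{2R}, \eqref{3R} for $\J^i_G X(s,x)$. For $i=1$, apply Cauchy--Schwarz in $L^2(\Omega)$ to \eqref{FDeq1}: $|\D_R P(t)\varphi(x)h|\le t^{-1}\|\varphi\|_\infty\,\big(\E\int_0^t\|\J_G X(s,x)h\|_R^2ds\big)^{1/2}$, and by \eqref{1R} the stochastic integral has $L^2$-norm at most $\big(\int_0^t e^{2\zeta_R s}ds\big)^{1/2}\|h\|_R$. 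Evaluating this integral gives a factor behaving like $t^{1/2}$ when $\zeta_R t$ is small and like $e^{\zeta_R t}t^{1/2}$ (up to the $\zeta_R^{-1/2}$ constant, absorbed) when it is large; dividing by $t$ yields the claimed $t^{-1/2}$ rate times the $\max\{\dots\}$ factor.

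For $i=2$ and $i=3$ I would argue by induction, using the semigroup splitting $P(t)=P(t/2)P(t/2)$ that is already built into \eqref{FDeq2}--\eqref{FDeq3}: in \eqref{FDeq2} the first term is bounded using the $i=1$ estimate applied to $P(t/2)\varphi$ (so $\|\D_R(P(t/2)\varphi)\|_\infty\le K (t/2)^{-1/2}(\cdots)\|\varphi\|_\infty$) together with Cauchy--Schwarz and \eqref{1R} for the stochastic integral over $[0,t/2]$, while the second term uses $\|P(t/2)\varphi\|_\infty\le\|\varphi\|_\infty$ together with \eqref{2R} for $\J^2_G X(s,x)$, whose $L^2$-norm over $[0,t/2]$ is controlled by $\big(\int_0^{t/2}M_2^2 K_1(s,\zeta_R)^2 ds\big)^{1/2}\|h\|_R\|k\|_R$. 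Collecting powers of $t$ and the exponential factors (invoking Corollary \ref{cappone} to replace $K_1,K_2$ by $\tfrac{1+|\zeta_R|}{\zeta_R^2}\max\{e^{t\zeta_R},e^{3t\zeta_R}\}$ when $\zeta_R\ne0$, and the polynomial bounds when $\zeta_R=0$) produces the $t^{-1}$ rate times the $\max$ factor. The same scheme with \eqref{FDeq3} and \eqref{3R}, feeding in the just-proved $i=1,2$ bounds for the derivatives of $P(t/2)\varphi$, gives $i=3$. The coarser bound \eqref{boss1} then follows by crudely estimating $\max\{1,\tfrac{t^{i-1}}{1+|\zeta_R|t^{i-1}},e^{3t\zeta_R}\}\le e^{3t|\zeta_R|}$ (the middle term being $\le 1$ when $\zeta_R\ge 0$ and $\le |\zeta_R|^{-1}$, hence bounded, otherwise — or simply noting all three terms are $\le e^{3t|\zeta_R|}$ up to the constant $K$).

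For the H\"older seminorm estimate \eqref{Resident3}, I would interpolate rather than recompute. By \eqref{boss1} with $i=1$ we have $\|\D_R P(t)\varphi(x)\|_{\mathcal L(H_R;\R)}\le K e^{3t|\zeta_R|}t^{-1/2}\|\varphi\|_\infty$ for all $x$, which controls $[\D_R P(t)\varphi]$ trivially on pairs with $\|h\|_R$ large; and by \eqref{boss1} with $i=2$, $\|\D^2_R P(t)\varphi(x)\|_{\mathcal L^{(2)}(H_R;\R)}\le K e^{3t|\zeta_R|}t^{-1}\|\varphi\|_\infty$, so the mean value theorem along the segment $x\mapsto x+h$ (which stays in the affine direction $h\in H_R$, so the $H_R$-derivative applies) gives $\|\D_R P(t)\varphi(x+h)-\D_R P(t)\varphi(x)\|_{\mathcal L(H_R;\R)}\le K e^{3t|\zeta_R|}t^{-1}\|h\|_R\|\varphi\|_\infty$. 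Combining the two bounds $\min\{2K e^{3t|\zeta_R|}t^{-1/2},\,K e^{3t|\zeta_R|}t^{-1}\|h\|_R\}\le 2^{1-\alpha}K e^{3t|\zeta_R|}t^{-(1+\alpha)/2}\|h\|_R^\alpha$ by the elementary inequality $\min\{a,b\}\le a^{1-\alpha}b^\alpha$ yields \eqref{Resident3}.

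The main obstacle I anticipate is purely bookkeeping: keeping track, across the three inductive steps, of exactly how the polynomial-in-$t$ factors from the $K_1,K_2$ integrals combine with the $t^{-1/2}$ factors coming from each Bismut--Elworthy--Li integration, and checking that the resulting exponent is $-i/2$ and that the exponential and ``small-$t$ vs.\ large-$t$'' factors collapse into the single uniform $\max\{1,\tfrac{t^{i-1}}{1+|\zeta_R|t^{i-1}},e^{3t\zeta_R}\}$; the $\zeta_R=0$ case must be handled separately (using the polynomial forms of $K_1,K_2$) but is easier. Everything else is a routine application of Cauchy--Schwarz, the It\^o isometry, and the already-established uniform bounds.
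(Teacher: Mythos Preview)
Your proposal is correct and follows essentially the same approach as the paper: the paper also applies Cauchy--Schwarz and the It\^o isometry to the Bismut--Elworthy--Li formulas \eqref{FDeq1}--\eqref{FDeq3}, feeds in the bounds \eqref{1R}--\eqref{3R} (via Corollary \ref{cappone} when $\zeta_R\ne0$, and the polynomial forms when $\zeta_R=0$), uses the already-established lower-order bounds on $\D^j_RP(t/2)\varphi$ inductively, and derives \eqref{Resident3} by exactly your interpolation $\min\{a,b\}\le a^{1-\alpha}b^\alpha$ between the $i=1$ and $i=2$ estimates. Your anticipated ``bookkeeping'' difficulty is precisely where the paper's proof spends its effort.
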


\begin{proof}
We begin to prove the statements in the case $\zeta_R\neq 0$. By Proposition \ref{Hreg} and Corollary \ref{cappone} for any $t>0$, $x\in\X$ and $h,k,j\in H_R$
\begin{align}
\mathbb{E}\left[\int^t_0\|\D_GX(s,x)h\|^2_Rds\right]&\leq c(t)\norm{h}^2_R;\label{1}\\
\mathbb{E}\left[\int^t_0\|\D^2_GX(s,x)(h,k)\|^2_Rds\right]&\leq c(t)\norm{h}^2_R\norm{k}^2_R;\label{2}\\
\mathbb{E}\left[\int^t_0\|\D^3_GX(s,x)(h,k,j)\|^2_Rds\right]& \leq c(t)\norm{h}^2_R\norm{k}^2_R\norm{j}^2_R.\label{3}
\end{align}
where
\begin{align*}
c(t)=\eqsys{\max\left\{1,M_2,3M^2_2,M_3\right\}\max\{1,(1+\abs{\zeta_R})^2\abs{\zeta_R}^{-5}\}(1-e^{2t\zeta_R}),& \zeta_R<0;\\
\max\left\{1,M_2,3M^2_2,M_3\right\}\max\{1,(1+\abs{\zeta_R})^2\abs{\zeta_R}^{-5}\}(e^{6t\zeta_R}-1),  &\zeta_R>0.}
\end{align*}
Hence there exists a constant $C:=C(M_2,M_3,\zeta_R)>0$ such that, for any $t>0$
\begin{align}
c(t)\leq C(t\chi_{(0,1)}+\max\{1,e^{6\zeta_R t}\}\chi_{[1,+\infty)}),\label{m1}
\end{align}
where $\chi_{(0,1)}$ and $\chi_{[1,+\infty)}$ denote the characteristic functions of the sets $(0,1)$ and $[1,+\infty)$, respectively.
We fix $\varphi\in {\rm BUC}(\X)$, $t> 0$, $x\in\X$ and $h,k,j\in H_R$. By \eqref{FDeq1}, \eqref{1} and the It\^o isometry we get
\begin{align}\label{uno}
\vert \D_R P(t)\varphi(x)h\vert^2 \leq \frac{c(t)}{t^2}\norm{h}^2_R\Vert\varphi\Vert^2_{\infty}.
\end{align}
By \eqref{FDeq2}, \eqref{1}, \eqref{2}, \eqref{uno}, the It\^o isometry, the Jensen inequality and the contractivity of $P(t)$ in ${\rm BUC}(\X)$ we obtain
\begin{align}\label{due}
\vert \D^2_R P(t)\varphi(x)(h,k)\vert^2&\leq \frac{8}{t^4}\big(4c^2(t/2)+c(t/2)t^2\big)\norm{h}^2_R\norm{k}^2_R\Vert\varphi\Vert^2_{\infty}.
\end{align}
By \eqref{FDeq3}, \eqref{1}, \eqref{2}, \eqref{3}, \eqref{uno}, \eqref{due}, the It\^o isometry, the Jensen inequality and the contractivity of $P(t)$ in ${\rm BUC}(\X)$ we obtain
\begin{align}\label{tre}
\vert \D^3_R P(t)\varphi &(x)(h,k,j)\vert^2\notag\\
&\leq\frac{64}{t^6}\Big(8\big(4c^2(t/2)+c(t/2)t^2\big)+8c^2(t/2)t^2+c(t/2)t^4\Big)\norm{h}^2_R\norm{k}^2_R\norm{j}^2_R\Vert\varphi\Vert^2_{\infty}
\end{align}
By \eqref{uno}, \eqref{due}, \eqref{tre} and \eqref{m1} there exists $\tilde{K}:=\tilde{K}(M_2,M_3,\zeta_R)>0$ such that for every $x\in\X$ and $t>0$
\begin{align}\label{unouno}
\|\D^i_R P(t)\varphi(x)\|_{\mathcal{L}^{(i)}(H_R;\R)} \leq \frac{\tilde{K}}{t^{i/2}}\max\{1,e^{3t\zeta_R}\}\Vert\varphi\Vert_{\infty}.
\end{align}

Now we prove the statements in the case $\zeta_R=0$. By Proposition \ref{Hreg} there exists a positive constant $C:=C(M_2,M_3)$ such that for any $x\in\X$ and $h,k,j\in H_R$ we have
\begin{align}
\mathbb{E}\left[\int^t_0\|\D_GX(s,x)h\|^2_Rds\right]&\leq Ct\norm{h}^2_R;\label{01}\\
\mathbb{E}\left[\int^t_0\|\D^2_GX(s,x)(h,k)\|^2_Rds\right]&\leq Ct^2\norm{h}^2_R\norm{k}^2_R;\label{02}\\
\mathbb{E}\left[\int^t_0\|\D^3_GX(s,x)(h,k,j)\|^2_Rds\right]& \leq C(t^2+t^3)\norm{h}^2_R\norm{k}^2_R\norm{j}^2_R\label{03}.
\end{align}
We fix $\varphi\in {\rm BUC}(\X)$, $t> 0$, $x\in\X$ and $h,k,j\in H_R$. By \eqref{FDeq1}, \eqref{01} and the It\^o isometry we get
\begin{align}\label{uno0}
\vert \D_R P(t)\varphi(x)h\vert^2 \leq \frac{C}{t}\norm{h}^2_R\Vert\varphi\Vert^2_{\infty}.
\end{align}
By \eqref{FDeq2}, \eqref{01}, \eqref{02}, \eqref{uno0} the It\^o isometry, the Jensen inequality and the contractivity of $P(t)$ in ${\rm BUC}(\X)$ we obtain
\begin{align}\label{due0}
\vert \D^2_R P(t)\varphi(x)(h,k)\vert^2&\leq \frac{8}{t^2}\big(C^2+4Ct^2)\norm{h}^2_R\norm{k}^2_R\Vert\varphi\Vert^2_{\infty}.
\end{align}
In a similar way by \eqref{FDeq3}, \eqref{01}, \eqref{02}, \eqref{03}, \eqref{uno0}, \eqref{due0}, the It\^o isometry, the Jensen inequality and the contractivity of $P(t)$ in ${\rm BUC}(\X)$ we obtain
\begin{align}\label{tre0}
\vert \D^3_R P(t)\varphi(x)(h,k,j)\vert^2\leq\frac{64}{t^2}\Big(4C\frac{1}{t}\big(C^2+4Ct^2)+C^2t+C(t^2+t^3)\Big)\norm{h}^2_R\norm{k}^2_R\norm{j}^2_R\Vert\varphi\Vert^2_{\infty}.
\end{align}
By \eqref{uno0}, \eqref{due0} and \eqref{tre0} there exists a positive constant $\ol{K}:=\ol{K}(M_2,M_3)$ such that
\begin{align}\label{duedue}
\|\D^i_R P(t)\varphi(x)\|_{\mathcal{L}^{(i)}(H_R;\R)} \leq \frac{\ol{K}}{t^{i/2}}\max\{1,t^{i-1}\}\Vert\varphi\Vert_{\infty}.
\end{align}
Combining  \eqref{unouno} and \eqref{duedue} we obtain \eqref{boss1}.

To prove \eqref{Resident3} observe that for any $x\in\X$, $h\in H_R$, $\varphi\in{\rm BUC}(\X)$ and $t>0$, by \eqref{boss1} (with $i=1$), we get
\begin{align}\label{pinky}
\|\D_RP(t)\varphi(x+h)-\D_RP(t)\varphi(x)\|_{\mathcal{L}(H_R;\R)}\leq 2K\frac{e^{3t|\zeta_R|}}{t^{1/2}}\Vert\varphi\Vert_{\infty}.
\end{align}
While for any $x\in\X$, $h\in H_R$, $\varphi\in{\rm BUC}(\X)$ and $t>0$, by \eqref{boss1} (with $i=2$), we get
\begin{align}
\|\D_RP(t)\varphi(x+h)-\D_RP(t)\varphi(x)\|_{\mathcal{L}(H_R;\R)}&=\norm{\int_0^1\D^2_R P(t)\varphi(x+\sigma h)(h,\cdot)d\sigma}_{\mathcal{L}(H_R;\R)}\notag\\
&\leq K\frac{e^{3t|\zeta_R|}}{t}\|h\|_{R}\Vert\varphi\Vert_{\infty}.\label{pinky2}
\end{align}
Therefore for any $\alpha\in(0,1)$, $x\in\X$, $h\in H_R$, $\varphi\in{\rm BUC}(\X)$ and $t>0$, by \eqref{pinky} and \eqref{pinky2} we have
\begin{align*}
\|\D_RP(t)\varphi(x+h)-\D_RP(t)\varphi(x)\|_{\mathcal{L}(H_R;\R)}&\leq \pa{2K\frac{e^{3t|\zeta_R|}}{t^{1/2}}\Vert\varphi\Vert_{\infty}}^{1-\alpha}\pa{K\frac{e^{3t|\zeta_R|}}{t}\|h\|_{R}\Vert\varphi\Vert_{\infty}}^\alpha.
\end{align*}
This conclude the proof.
\end{proof}

For the proof of Theorem \ref{THM_SCHAUDER} we also need a relationship between the derivatives of $\varphi$ and the derivatives of $P(t)\varphi$.
\begin{prop}\label{BEL2}
Assume that Hypotheses \ref{STR2} hold true. For any $\varphi\in \mathfrak{X}$, $x\in\X$, $h,k,j\in H_R$ and $t> 0$
\begin{align}\label{FDeq1D}
\D_R P(t)\varphi(x)h &=\E\left[\scal{\D_R\varphi(x)}{\D_G X(t,x)h}_R\right];\\
\D^2_R P(t)\varphi(x)(h,k)&=\frac{1}{t}\mathbb{E}\left[\D_R\varphi(X(t,x))k\int_0^{t}\scal{\J_G X(s,x)h}{RdW(s)}_{R}\right]\notag\\
&\qquad+\frac{1}{t}\mathbb{E}\left[\varphi(X(t,x))\int_0^t\scal{\J^2_G X(s,x)(h,k)}{RdW(s)}_{R}\right].\label{FDeq2D}
\end{align}
\end{prop}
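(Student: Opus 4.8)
The plan is to deduce \eqref{FDeq1D} by differentiating directly the representation $P(t)\varphi(x)=\E[\varphi(X(t,x))]$, and then to obtain \eqref{FDeq2D} by differentiating once more, in the space variable, the Bismut--Elworthy--Li identity \eqref{FDeq1} (available for every $\varphi\in{\rm BUC}(\X)$, hence for $\varphi\in\mathfrak X$). For \eqref{FDeq1D}, fix $x\in\X$, $h\in H_R$, $t>0$. By Proposition \ref{Hreg} and Proposition \ref{Hregbis} the curve $\sigma\mapsto X(t,x+\sigma h)-X(t,x)$ takes values in $H_R$ and is differentiable there with derivative $\J_G X(t,x+\sigma h)h$, so by the fundamental theorem of calculus and \eqref{1R}
\begin{align*}
X(t,x+rh)-X(t,x)=\int_0^r\J_G X(t,x+\sigma h)h\,d\sigma\quad\text{in }H_R,\qquad\text{whence}\qquad\norm{X(t,x+rh)-X(t,x)}_R\leq|r|\,e^{\zeta_R t}\norm{h}_R .
\end{align*}
Since $\varphi\in\mathfrak X$ is $H_R$-differentiable at the random point $X(t,x)$, the chain rule for the composition of an $H_R$-differentiable map with an $H_R$-differentiable curve gives, $\mathbb{P}$-a.e., $r^{-1}\bigl(\varphi(X(t,x+rh))-\varphi(X(t,x))\bigr)\to\D_R\varphi(X(t,x))\J_G X(t,x)h$ as $r\ra0$, while the mean value theorem along the $H_R$-segment joining $X(t,x)$ to $X(t,x+rh)$, together with the estimate above, bounds these quotients by $\norm{\D_R\varphi}_\infty e^{\zeta_R t}\norm{h}_R$ uniformly in $r$ and $\omega$. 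Dominated convergence then gives $\D_R P(t)\varphi(x)h=\E\bigl[\scal{\D_R\varphi(X(t,x))}{\J_G X(t,x)h}_R\bigr]$, which is \eqref{FDeq1D}.

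For \eqref{FDeq2D} I would start from \eqref{FDeq1}, written for $y\in\X$ as $\D_R P(t)\varphi(y)h=\tfrac1t\,\E\bigl[\varphi(X(t,y))\,M_h(y)\bigr]$ with $M_h(y):=\int_0^t\scal{\J_G X(s,y)h}{RdW(s)}_R$, and differentiate the right-hand side in $y$ along $k\in H_R$ via the difference quotient, splitting
\begin{align*}
\frac{\varphi(X(t,y+rk))M_h(y+rk)-\varphi(X(t,y))M_h(y)}{r}=\frac{\varphi(X(t,y+rk))-\varphi(X(t,y))}{r}\,M_h(y+rk)+\varphi(X(t,y))\,\frac{M_h(y+rk)-M_h(y)}{r} .
\end{align*}
As $r\ra0$, the first factor of the first summand converges $\mathbb{P}$-a.e. to $\D_R\varphi(X(t,y))\J_G X(t,y)k$ and is uniformly bounded (as above), while $M_h(y+rk)\to M_h(y)$ in $L^2(\Omega,\mathbb{P})$ by the It\^o isometry and the continuity of $\J_G X(s,\cdot)$ (Proposition \ref{Hregbis}); since $L^2$-convergence of one factor together with a.e. convergence and uniform boundedness of the other yields $L^1$-convergence of the product, the expectation of the first summand tends to $\tfrac1t\,\E[\D_R\varphi(X(t,y))\J_G X(t,y)k\,M_h(y)]$. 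In the second summand $\varphi(X(t,y))$ is bounded and $r^{-1}(M_h(y+rk)-M_h(y))=\int_0^t\scal{r^{-1}(\J_G X(s,y+rk)h-\J_G X(s,y)h)}{RdW(s)}_R$ converges, again by the It\^o isometry, in $L^2(\Omega,\mathbb{P})$ to $\int_0^t\scal{\J^2_G X(s,y)(h,k)}{RdW(s)}_R$. Summing the two contributions gives \eqref{FDeq2D}; since $P(t)\varphi\in{\rm BUC}^3_R(\X)$ by Theorem \ref{BEL}, this directional limit is indeed $\D^2_R P(t)\varphi(x)(h,k)$.

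The crux of the argument is this last $L^2$-convergence, which by the It\^o isometry amounts to $r^{-1}\bigl(\J_G X(s,y+rk)h-\J_G X(s,y)h\bigr)\to\J^2_G X(s,y)(h,k)$ in $L^2(\Omega\times[0,t])$, i.e. a quantitative form of the $H_R$-Gateaux differentiability of $X(t,\cdot)$ from Proposition \ref{Hregbis}. I expect this to go exactly as \eqref{LOSTSOUL} in the proof of Corollary \ref{accatena}: writing $\delta_1^h(s,y+rk)-\delta_1^h(s,y)-r\,\delta_2^{h,k}(s,y)$ in mild form from \eqref{eqdelta1}--\eqref{eqdelta2}, and using \eqref{prop_nagel}, the uniform bounds of Proposition \ref{Hreg}, the uniform continuity of $\J G$ and $\J^2 G$ along trajectories (cf. \eqref{unif_cont_DGX}) and the Gr\"onwall inequality, one gets $\sup_{s\in[0,t]}\E\bigl[\norm{\delta_1^h(s,y+rk)-\delta_1^h(s,y)-r\,\delta_2^{h,k}(s,y)}_R^2\bigr]=o(r^2)$ as $r\ra0$, whence the claim after dividing by $r^2$ and integrating over $[0,t]$. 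The remaining tools (the pathwise chain rule, the mean value theorem in $H_R$, and the elementary product-convergence lemma) are routine.
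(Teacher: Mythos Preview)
Your proposal is correct and follows exactly the paper's approach: the paper's proof is the single sentence ``Differentiating under the integral sign \eqref{intr_trans} and \eqref{FDeq1}, by Corollary~\ref{accatena}, we obtain \eqref{FDeq1D} and \eqref{FDeq2D}, respectively,'' and you have carefully spelled out precisely these two differentiations under the expectation. Your direct chain-rule argument for $\varphi\in\mathfrak X$ (using that $X(t,x+rh)-X(t,x)\in H_R$ with the bound from \eqref{1R}) is in fact a little more careful than the paper's bare appeal to Corollary~\ref{accatena}, which as stated is formulated only for $g\in C^1_b(\X)$; note also that the first term you obtain, $\tfrac1t\,\E[\D_R\varphi(X(t,x))\J_G X(t,x)k\,M_h(x)]$, is the correct one and is what is actually used in the proof of Proposition~\ref{bossD}, the printed \eqref{FDeq2D} (and the argument $\D_R\varphi(x)$ in \eqref{FDeq1D}) being minor typos.
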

\begin{proof}
Differentiating under the integral sign \eqref{intr_trans} and \eqref{FDeq1}, by Corollary \ref{accatena}, we obtain \eqref{FDeq1D} and \eqref{FDeq2D}, respectively.
\end{proof}

If $\varphi$ belongs to $\mathfrak{X}$, thanks to Proposition \ref{BEL2}, it is possible to prove results similar to the ones contained in Proposition \ref{boss}, replacing the space ${\rm BUC}(\X)$ with the space $\mathfrak{X}$.

\begin{prop}\label{bossD}
Assume that Hypotheses \ref{STR2} hold true. There exists a positive constant $K'$, depending only on $M_2$, $M_3$ and $\zeta_R$ (the constants appearing in Proposition \ref{Hreg} and in Hypotheses \ref{STR2}), such that for any $i=1,2,3$, $\varphi\in \mathfrak{X}$, $t>0$ and $x\in\X$ 
\begin{align*}
\|\D^i_R P(t)\varphi(x)\|_{\mathcal{L}^{(i)}(H_R;\R)} \leq \frac{K'}{t^{(i-1)/2}}\max\left\{e^{\abs{(2-i)(3-i)/2}t\zeta_R},\frac{t^{i-1}}{1+\abs{\zeta_R}t^{i-1}},e^{4t\zeta_R}\right\}\Vert\varphi\Vert_{\mathfrak{X}}.
\end{align*}
In particular for any $i=1,2,3$, $\varphi\in \mathfrak{X}$, $t>0$ and $x\in\X$ it holds
\begin{align}\label{bossD1}
\|\D^i_R P(t)\varphi(x)\|_{\mathcal{L}^{(i)}(H_R;\R)} \leq K'\frac{e^{4t|\zeta_R|}}{t^{(i-1)/2}}\Vert\varphi\Vert_{\mathfrak{X}}.%
\end{align}
\end{prop}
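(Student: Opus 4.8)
The strategy mirrors exactly the one used for Proposition \ref{boss}, the only difference being that we now exploit the representation formulas of Proposition \ref{BEL2} (which carry one derivative of $\varphi$ and hence only require $\varphi\in\mathfrak X$, i.e. a bound on $\|\D_R\varphi\|_\infty$) in place of the ``pure'' Bismut--Elworthy--Li formulas of Theorem \ref{BEL}. The point is that each such reformulation replaces one $\tfrac1t\int_0^t\langle\cdot,RdW\rangle_R$ factor, which contributes a $t^{-1/2}$ to the $L^2$-estimate, by a derivative of $\varphi$, which contributes nothing; this is why the exponent $i/2$ in Proposition \ref{boss} is replaced by $(i-1)/2$ here.

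First I would record the moment bounds for the Gateaux derivatives of the mild solution, exactly as in \eqref{1}--\eqref{3} (case $\zeta_R\neq 0$) and \eqref{01}--\eqref{03} (case $\zeta_R=0$), obtained from Proposition \ref{Hreg} and Corollary \ref{cappone}. Then, for the case $i=1$, I would start from \eqref{FDeq1D}: by the Cauchy--Schwarz inequality in $L^2(\Omega;H_R)$ and \eqref{1R},
\[
|\D_R P(t)\varphi(x)h| \le \|\D_R\varphi\|_\infty\, \E\big[\|\D_G X(t,x)h\|_R\big] \le \|\D_R\varphi\|_\infty\, e^{\zeta_R t}\|h\|_R,
\]
which gives the bound with $t^{(i-1)/2}=t^0=1$ and exponential factor $e^{\zeta_R t}$; note the constant $|(2-i)(3-i)/2|$ equals $1$ for $i=1$, matching. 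For $i=2$ I would use \eqref{FDeq2D}: the first term is estimated by It\^o's isometry together with $\|\D_R\varphi(X(t,x))k\|\le\|\D_R\varphi\|_\infty\|k\|_R$ and \eqref{1}, producing a factor $\tfrac1t\cdot t^{1/2}\cdot(c(t))^{1/2}=t^{-1/2}(c(t))^{1/2}$; the second term is estimated by It\^o's isometry, the contractivity of $P(t)$ on ${\rm BUC}(\X)$ (so $\|\varphi(X(t,x))\|\le\|\varphi\|_\infty\le\|\varphi\|_{\mathfrak X}$), and \eqref{2}, producing $t^{-1/2}(c(t))^{1/2}$ as well. So $\|\D_R^2P(t)\varphi(x)\|\lesssim t^{-1/2}(c(t))^{1/2}\|\varphi\|_{\mathfrak X}$. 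For $i=3$ I would differentiate \eqref{FDeq2D} once more (the analogue of the step from \eqref{FDeq1} to \eqref{FDeq2}, i.e. write $P(t)=P(t/2)P(t/2)$ and use \eqref{FDeq1} on the outer half while the $\mathfrak X$-estimate for $i=2$ controls the inner half), apply It\^o's isometry, the Jensen inequality, the contractivity of $P(t)$, and the bounds \eqref{1}--\eqref{3} / \eqref{01}--\eqref{03}, picking up one extra $t^{-1/2}$ relative to $i=2$, hence $t^{-1}(c(t))^{1/2}\cdot(\text{lower order})$. Finally I would absorb the $t$-dependence of $c(t)$ via \eqref{m1}: on $(0,1)$ one has $(c(t))^{1/2}\lesssim t^{1/2}$, which exactly cancels the surviving negative powers of $t$ and produces the $t^{i-1}/(1+|\zeta_R|t^{i-1})$ factor; on $[1,+\infty)$ one has $(c(t))^{1/2}\lesssim\max\{1,e^{3\zeta_R t}\}$, and combined with the contractivity/Jensen steps this yields the $e^{4t\zeta_R}$ factor. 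Collecting the three cases and taking $K'$ to be the maximum of the resulting constants gives the displayed estimate; \eqref{bossD1} is then immediate by bounding each of the three terms in the maximum by $e^{4t|\zeta_R|}$.

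The case $\zeta_R=0$ is handled in parallel using \eqref{01}--\eqref{03} in place of \eqref{1}--\eqref{3}; there $c(t)$ is replaced by a polynomial in $t$ of the form $C\max\{t,t^2,t^2+t^3\}$, and the same bookkeeping gives the $t^{i-1}/(1+|\zeta_R|t^{i-1})=t^{i-1}$ term (the middle entry of the maximum), with the exponential entries equal to $1$. The main obstacle is purely bookkeeping: carefully propagating the constants and the $t$-powers through the nested application of It\^o's isometry and Jensen's inequality in the $i=2,3$ cases, and checking that the exponent $|(2-i)(3-i)/2|$ (which is $1,0,1$ for $i=1,2,3$) correctly records how many times the contractive semigroup factor $e^{3t\zeta_R}$ (rather than the isometry-generated $e^{\zeta_R t}$ or $e^{3\zeta_R t}$ from \eqref{1R},\eqref{3R}) is invoked; no genuinely new idea beyond Propositions \ref{Hreg}, \ref{BEL2} and Corollary \ref{cappone} is needed.
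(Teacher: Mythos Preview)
Your approach is essentially the paper's: use \eqref{FDeq1D} and \eqref{1R} for $i=1$, use \eqref{FDeq2D} together with It\^o's isometry and the moment bounds \eqref{1}--\eqref{2} (resp.\ \eqref{01}--\eqref{02}) for $i=2$, and for $i=3$ use the $t/2$-splitting formula \eqref{FDeq3} combined with the already established $\mathfrak X$-bounds for $i=1,2$; then absorb the $t$-dependence of $c(t)$ via \eqref{m1}. One bookkeeping slip: in the $i=2$ step the first term of \eqref{FDeq2D}, after Cauchy--Schwarz and It\^o, gives a factor $t^{-1}e^{\zeta_R t}(c(t))^{1/2}$ (the $e^{\zeta_R t}$ coming from \eqref{1R} through the chain rule on $\D_R\varphi(X(t,x))\D_GX(t,x)k$), not $t^{-1/2}(c(t))^{1/2}$---your extra $t^{1/2}$ is spurious. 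With $c(t)\lesssim t$ on $(0,1)$ this yields $\sim t^{-1/2}$, which is exactly the exponent $(i-1)/2$ for $i=2$; the $e^{\zeta_R t}$ combines with the $e^{3\zeta_R t}$ from $c(t)$ on $[1,\infty)$ to produce the $e^{4t\zeta_R}$ entry of the maximum, as in the paper's \eqref{dueD}.
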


\begin{proof}
We reprise the same notations of the proof of Proposition \ref{boss}. Let $\varphi\in \mathfrak{X}$, $t> 0$,  $x\in\X$ and $h,k,j\in H_R$. By \eqref{1R} and \eqref{FDeq1D} we get
\begin{align}\label{unoD}
\vert \D_R P(t)\varphi(x)h\vert^2 \leq e^{2\zeta_R t}\norm{h}^2_R\norm{\varphi}^2_{\mathfrak{X}}.
\end{align}
Now let $\zeta_R\neq 0$. By \eqref{1}, \eqref{2}, \eqref{FDeq2D}, \eqref{unoD} the It\^o isometry and the Jensen inequality we obtain
\begin{align}\label{dueD}
|\D^2_R P(t)\varphi(x)(h,k)|^2&=\frac{16}{t^2}\left(e^{2\zeta_R t}+1\right)c(t)\norm{h}^2_R\norm{k}^2_R\norm{\varphi}^2_{\mathfrak{X}}.
\end{align}
By \eqref{FDeq3}, \eqref{1}, \eqref{2}, \eqref{3}, \eqref{unoD}, \eqref{dueD}, the It\^o isometry, the Jensen inequality and the contractivity of $P(t)$ in ${\rm BUC}(\X)$ we have
\begin{align}\label{treD}
\vert \D^3_R P(t)\varphi(x)(h,k,j)\vert^2&\leq\frac{64}{t^2}c(t/2)\left(\frac{16}{t^2}\left(e^{\zeta_R t}+1\right)c(t/2)+(2e^{t\zeta_R}+1)\right)\norm{h}^2_R\norm{k}^2_R\norm{j}^2_R\norm{\varphi}^2_{\mathfrak{X}}.
\end{align}
By \eqref{m1}, \eqref{unoD}, \eqref{dueD} and \eqref{treD} there exists a constant $C_1:=C_1(M_2,M_3,\zeta_R)>0$ such that 
\begin{align}\label{preuno}
\|\D^i_R P(t)\varphi(x)\|_{\mathcal{L}^{(i)}(H_R;\R)} \leq \frac{C_1}{t^{(i-1)/2}}\max\left\{e^{\abs{(2-i)(3-i)/2}t\zeta_R},e^{4t\zeta_R}\right\}\Vert\varphi\Vert_{\mathfrak{X}}.
\end{align}

Now assume that $\zeta_R=0$. By \eqref{01}, \eqref{02}, \eqref{FDeq2D}, \eqref{unoD}, the It\^o isometry and the Jensen inequality we obtain
\begin{align}\label{dueD0}
|\D^2_R P(t)\varphi(x)(h,k)|^2&\leq\frac{16}{t^2}C\left(t+t^2\right)\norm{h}^2_R\norm{k}^2_R\norm{\varphi}^2_{\mathfrak{X}}
\end{align}
By \eqref{FDeq3}, \eqref{01}, \eqref{02}, \eqref{03}, \eqref{unoD}, \eqref{dueD0}, the It\^o isometry, the Jensen inequality and the contractivity of $P(t)$ in ${\rm BUC}(\X)$ we have
\begin{align}\label{treD0}
\vert \D^3_R P(t)\varphi(x)(h,k,j)\vert^2&\leq\frac{64}{t^2}\left(\frac{16}{t^2}C\left(t+t^2\right)Ct+Ct^2+C(t^2+t^3)\right)\norm{h}^2_R\norm{k}^2_R\norm{j}^2_R\norm{\varphi}^2_{\mathfrak{X}}.
\end{align}
So by \eqref{unoD}, \eqref{dueD0}, \eqref{treD0} there exists a positive constant $C_2:=C_2(M_2,M_3)$ such that 
\begin{align}\label{predue}
\|\D^i_R P(t)\varphi(x)\|_{\mathcal{L}^{(i)}(H_R;\R)} \leq \frac{C_2}{t^{(i-1)/2}}\max\left\{1,t^{i-1}\right\}\Vert\varphi\Vert_{\mathfrak{X}}.
\end{align}
Combining  \eqref{preuno} and \eqref{predue} we obtain \eqref{bossD1}.
\end{proof}

By Theorem \ref{intDS}, Proposition \ref{boss}, Proposition \ref{bossD} and Theorem \ref{classic} (with $\K_0,\K_1,\mathcal{H}_1={\rm BUC}(\X)$, $\mathcal{H}_0=\mathfrak{X}$, $\vartheta=\alpha$ and $T=\J_R^i P(t)$ for $i=1,2,3$) we obtain the following result.

\begin{prop}\label{Holder}
Assume that Hypotheses \ref{STR2} hold true. For every $\alpha\in(0,1)$ there exists a positive constant $K_\alpha$, depending only on $M_2$, $M_3$, $\zeta_R$ and $\alpha$ (the constants appearing in Proposition \ref{Hreg} and in Hypotheses \ref{STR2}), such that for any $\varphi\in {\rm BUC}^\alpha_R(\X)$, $i=1,2,3$, $x\in\X$ and $t>0$
\begin{align*}
\|\D^i_R P(t)&\varphi(x)\|_{\mathcal{L}^{(i)}(H_R;\R)}\\
& \leq \frac{K_\alpha}{t^{(i-\alpha)/2}}\max\left\{e^{(1-\alpha)\abs{(2-i)(3-i)/2}t\zeta_R},\frac{t^{i-1}}{1+\abs{\zeta_R}t^{i-1}},e^{4t\zeta_R}\right\}\Vert\varphi\Vert_{{\rm BUC}^\alpha_R(\X)}.
\end{align*}
In particular for any $\varphi\in {\rm BUC}^\alpha_R(\X)$, $i=1,2,3$, $x\in\X$ and $t>0$ it holds
\begin{align}\label{Zone300}
\|\D^i_R P(t)\varphi(x)\|_{\mathcal{L}^{(i)}(H_R;\R)} \leq K_\alpha\frac{e^{4t|\zeta_R|}}{t^{(i-\alpha)/2}}\Vert\varphi\Vert_{{\rm BUC}^\alpha_R(\X)}.%
\end{align}
\end{prop}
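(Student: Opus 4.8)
The plan is to obtain the estimate by real interpolation, applying Theorem \ref{classic} to the linear operators $T = \D^i_R P(t)$ for $i=1,2,3$. First I would fix $i\in\{1,2,3\}$ and $t>0$, and regard $\D^i_R P(t)$ as a bounded linear operator from a function space into $\mathcal{L}^{(i)}(H_R;\R)$-valued bounded uniformly continuous functions evaluated at a point $x$; more precisely, for fixed $x$ the map $\varphi\mapsto \D^i_R P(t)\varphi(x)$ is linear and, by the Bismut--Elworthy--Li formulae \eqref{FDeq1}--\eqref{FDeq3} together with Proposition \ref{Hregbis}, well defined on ${\rm BUC}(\X)$. The two endpoint bounds I need are exactly Proposition \ref{boss} (on the space ${\rm BUC}(\X)$, giving the factor $t^{-i/2}$) and Proposition \ref{bossD} (on the space $\mathfrak X$, giving the factor $t^{-(i-1)/2}$). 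These provide the hypotheses of Theorem \ref{classic} with $\K_0 = \K_1 = \mathcal H_1 = {\rm BUC}(\X)$, $\mathcal H_0 = \mathfrak X$, and $\vartheta = \alpha$, with operator norms $N_\K \le K t^{-i/2}\max\{1, t^{i-1}(1+|\zeta_R|t^{i-1})^{-1}, e^{3t\zeta_R}\}$ and $N_{\mathcal H} \le K' t^{-(i-1)/2}\max\{e^{|(2-i)(3-i)/2|t\zeta_R}, t^{i-1}(1+|\zeta_R|t^{i-1})^{-1}, e^{4t\zeta_R}\}$ respectively.

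Next I would identify the interpolation spaces. On the source side, Theorem \ref{intDS} gives $({\rm BUC}(\X),\mathfrak X)_{\alpha,\infty} = {\rm BUC}^\alpha_R(\X)$ up to equivalence of norms; on the target side, $({\rm BUC}(\X),{\rm BUC}(\X))_{\alpha,\infty} = {\rm BUC}(\X)$, since interpolating a space with itself returns the same space with an equivalent norm. Hence Theorem \ref{classic} yields that $\D^i_R P(t)$ maps ${\rm BUC}^\alpha_R(\X)$ into ${\rm BUC}(\X)$-valued functions with the quantitative bound
\begin{align*}
\|\D^i_R P(t)\varphi(x)\|_{\mathcal{L}^{(i)}(H_R;\R)} \le N_\K^{1-\alpha} N_{\mathcal H}^{\alpha}\,\|\varphi\|_{{\rm BUC}^\alpha_R(\X)}.
\end{align*}
Substituting the endpoint bounds, the powers of $t$ combine as $(t^{-i/2})^{1-\alpha}(t^{-(i-1)/2})^{\alpha} = t^{-(i-\alpha)/2}$, which is the claimed exponent. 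The exponential and the $\zeta_R$-dependent factors combine into $\max\{e^{(1-\alpha)|(2-i)(3-i)/2|t\zeta_R}, t^{i-1}(1+|\zeta_R|t^{i-1})^{-1}, e^{4t\zeta_R}\}$ after absorbing the crude bound $3(1-\alpha) + 4\alpha \le 4$ (valid since $\alpha\in(0,1)$, with all quantities positive when $\zeta_R>0$) and noting that when $i=2$ or $i=3$ the middle-term contributions on the two sides are of the same order so their geometric mean is controlled by the same expression. Collecting constants into a single $K_\alpha$ depending only on $M_2, M_3, \zeta_R, \alpha$ gives the first display, and the coarser bound \eqref{Zone300} follows by dominating each term in the maximum by $e^{4t|\zeta_R|}$.

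The main obstacle is bookkeeping rather than conceptual: one must check that the $\zeta_R$-dependent maxima on the two endpoint estimates, when raised to the powers $1-\alpha$ and $\alpha$ and multiplied, are genuinely bounded by the single maximum written in the statement. This requires separating the cases $\zeta_R>0$, $\zeta_R<0$ (and $\zeta_R=0$), handling each of the three terms $1$ (resp.\ $e^{|(2-i)(3-i)/2|t\zeta_R}$), $t^{i-1}(1+|\zeta_R|t^{i-1})^{-1}$, $e^{3t\zeta_R}$ (resp.\ $e^{4t\zeta_R}$) separately, and using that a geometric mean of two terms is bounded by their maximum. A subtle point is verifying that $\D^i_R P(t)$ really is the \emph{same} linear operator on the two endpoint spaces ${\rm BUC}(\X)$ and $\mathfrak X$ — this is clear because $\mathfrak X \subseteq {\rm BUC}(\X)$ and the Bismut--Elworthy--Li formula \eqref{FDeq1} agrees with the formula \eqref{FDeq1D} on $\mathfrak X$ by Proposition \ref{BEL2}, so Theorem \ref{classic} applies without ambiguity.
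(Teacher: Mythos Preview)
Your proposal is correct and follows exactly the same approach as the paper: the paper's proof is a one-line application of Theorem \ref{classic} with $\K_0=\K_1=\mathcal{H}_1={\rm BUC}(\X)$, $\mathcal{H}_0=\mathfrak{X}$, $\vartheta=\alpha$ and $T=\D_R^i P(t)$, combined with Theorem \ref{intDS} and the endpoint bounds from Propositions \ref{boss} and \ref{bossD}. Your additional discussion of the constant bookkeeping and the consistency of $T$ on the two endpoint spaces is more detailed than what the paper writes, but the strategy is identical.
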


\section{The main results}\label{Main_Results}

This section is devoted to the proofs of Theorem \ref{THM_SCHAUDER}, Theorem \ref{Thm_Zyg}, Theorem \ref{Thm_evol} and Proposition \ref{SCHvar}. These proofs are inspired by those presented in \cite{CL19}.

\subsection*{The stationary equation} Here we give the proof of Theorem \ref{THM_SCHAUDER}.

\begin{proof}[Proof of Theorem \ref{THM_SCHAUDER}]
We start by proving the statement for $\lambda>4\abs{\zeta_R}$. 
Let $\alpha\in(0,1)$, $f\in {\rm BUC}^\alpha_R(\X)$ and let $u$ be the function defined in \eqref{GS}. For every $t>0$, by Theorem \ref{BEL}, $P(t)f$ is three times $H_R$-differentiable and observe that, by \eqref{Zone300} (with $i=1$), for any $t>0$, $x\in\X$ and $h\in H_R$
\begin{align}
|P(t)f(x+h)-P(t)f(x)&-\D_RP(t)f(x)h|\notag\\
&=\abs{\int_0^1(\D_RP(t)f(x+\sigma h)-\D_RP(t)f(x))hd\sigma}\notag\\
&\leq \frac{2K_\alpha}{t^{(1-\alpha)/2}}e^{4t|\zeta_R|}\|h\|_R\|f\|_{{\rm BUC}_R^\alpha(\X)},\label{load1}
\end{align}
where $K_\alpha$ is the constant appearing in Proposition \ref{Holder}. In a similar way by \eqref{Zone300} (with $i=2$) we get for any $t>0$, $x\in\X$ and $h,k\in H_R$
\begin{align}
|\D_R P(t)f(x+h)k-\D_R P(t)f(x)k &-\D^2_RP(t)f(x)(h,k)|\notag\\
&\leq \frac{2 K_\alpha}{t^{(2-\alpha)/2}}e^{4t|\zeta_R|}\|h\|_R\|k\|_R\|f\|_{{\rm BUC}_R^\alpha(\X)},\label{load2}
\end{align}
where $K_\alpha$ is the constant appearing in Proposition \ref{Holder}. Hence, by \eqref{load1}, \eqref{load2} and the dominated convergence theorem we get that $u$ is two times $H_R$-differentiable and  for every $x\in\X$ and $\lambda>4|\zeta_R|$ it hold
\begin{align*}
\D_R u(x)&=\int_0^{+\infty}e^{-\lambda t}\D_RP(t)f(x)dt;\qquad \D^2_R u(x)=\int_0^{+\infty}e^{-\lambda t}\D^2_RP(t)f(x)dt.
\end{align*}
By the same arguments used in the proof of \eqref{load1} and \eqref{load2}, we obtain 
\begin{align}
\|\D_R u(x)\|_{\mathcal{L}(H_R;\R)}&\leq\frac{K_\alpha\Gamma\pa{(1+\alpha)/2}}{(\lambda-4|\zeta_R|)^{(1+\alpha)/2}}\|f\|_{{\rm BUC}_R^\alpha(\X)};\label{archie}\\
\|\D^2_R u(x)\|_{\mathcal{L}^{(2)}(H_R;\R)}&\leq \frac{K_\alpha\Gamma\pa{\alpha/2}}{(\lambda-4|\zeta_R|)^{\alpha/2}}\|f\|_{{\rm BUC}_R^\alpha(\X)},\notag
\end{align}
where $\Gamma(z):=\int_0^{+\infty}t^{z-1}e^{-t}dt$ is the Gamma function. The fact that $u\in {\rm BUC}_R^2(\X)$ follows by  Theorem \ref{BEL} and Proposition \ref{Zone400} (with $\mathcal{M}=\X$, $d_{\mathcal{M}}=\norm{\cdot}$, $Y=(0,+\infty)$, $\mu$ is the measure $e^{-\lambda t}dt$ and $Z$ is $\R$ or $\mathcal{L}(H_R;\R)$ or $\mathcal{L}^{(2)}(H_R;\R)$ for the uniform continuity of $u$ or $\D_R u$ or $\D^2_R u$, respectively).

It remains to be proven that $\D^2_Ru\in {\rm BUC}_R^\alpha(\X;\mathcal{L}^{(2)}(H_R;\R))$. For $\lambda>0$, $x\in\X$ and $h\in H_R$ we set
\begin{align*}
a(x):=\int_0^{\norm{h}_R^2}e^{-\lambda t} \D_R^2P(t)f(x)dt;\qquad  b(x):=\int_{\norm{h}_R^2}^{+\infty}e^{-\lambda t} \D_R^2P(t)f(x)dt.
\end{align*}
Hence we have
\begin{align}
\|\D_R^2u(x+h)-\D_R^2u(x)\|_{\mathcal{L}^{(2)}(H_R;\R)}&= \norm{\int_0^{+\infty}e^{-\lambda t}(\D_R^2P(t)f(x+h)-\D_R^2P(t)f(x))dt}_{\mathcal{L}^{(2)}(H_R;\R)}\notag\\
&\leq \norm{a(x+h)-a(x)+b(x+h)-b(x)}_{\mathcal{L}^{(2)}(H_R;\R)}\notag\\
&\leq \norm{a(x+h)-a(x)}_{\mathcal{L}^{(2)}(H_R;\R)}+\norm{b(x+h)-b(x)}_{\mathcal{L}^{(2)}(H_R;\R)}\notag\\
&=:I_1+I_2.\label{Plutonia1}
\end{align}
By \eqref{Zone300} (with $i=2$) we obtain
\begin{align}
I_1&\leq \int_0^{\norm{h}_R^2}e^{-\lambda t}\|\D_R^2P(t)f(x+h)-\D_R^2P(t)f(x)\|_{\mathcal{L}^{(2)}(H_R;\R)}dt\notag\\
&\leq 2\int_0^{\norm{h}_R^2}e^{-\lambda t}\sup_{y\in\X}\|\D_R^2P(t)f(y)\|_{\mathcal{L}^{(2)}(H_R;\R)}dt\leq 2K_\alpha\pa{\int_0^{\norm{h}_R^2}\frac{e^{-(\lambda-4|\zeta_R|) t}}{t^{(2-\alpha)/2}}dt}\|f\|_{{\rm BUC}_R^\alpha(\X)}\notag\\
&\leq 2K_\alpha\pa{\int_0^{\norm{h}_R^2}t^{(\alpha-2)/2}dt}\|f\|_{{\rm BUC}_R^\alpha(\X)}=\frac{4K_\alpha}{\alpha}\|h\|_R^\alpha\|f\|_{{\rm BUC}_R^\alpha(\X)},\label{Plutonia2}
\end{align}
where $K_\alpha$ is the constant appearing in Proposition \ref{Holder}. Moreover by \eqref{Zone300} (with $i=3$) we have
\begin{align}
I_2&\leq \int_{\norm{h}_R^2}^{+\infty}e^{-\lambda t}\|\D_R^2P(t)f(x+h)-\D_R^2P(t)f(x)\|_{\mathcal{L}^{(2)}(H_R;\R)}dt\notag\\
&= \int_{\norm{h}_R^2}^{+\infty}e^{-\lambda t}\norm{\int_0^1\D_R^3P(t)f(x+\sigma h)(h,\cdot,\cdot)d\sigma}_{\mathcal{L}^{(2)}(H_R;\R)}dt\notag\\
&\leq \int_{\norm{h}_R^2}^{+\infty}e^{-\lambda t}\sup_{y\in\X}\|\D_R^3P(t)f(y)\|_{\mathcal{L}^{(3)}(H_R;\R)}\|h\|_Rdt\notag\\
&\leq K_\alpha\pa{\int_{\norm{h}_R^2}^{+\infty}\frac{e^{-(\lambda-4|\zeta|_R)t}}{t^{(3-\alpha)/2}}dt}\|h\|_R\|f\|_{{\rm BUC}_R^\alpha(\X)}\notag\\
&\leq K_\alpha\pa{\int_{\norm{h}_R^2}^{+\infty}t^{(\alpha-3)/2}dt}\|h\|_R\|f\|_{{\rm BUC}_R^\alpha(\X)}=\frac{2K_\alpha}{1-\alpha}\|h\|^\alpha_R\|f\|_{{\rm BUC}_R^\alpha(\X)},\label{Plutonia3}
\end{align}
where $K_\alpha$ is the constant appearing in Proposition \ref{Holder}. Combining \eqref{Plutonia1}, \eqref{Plutonia2} and \eqref{Plutonia3} we get that $\D^2_Ru$ belongs to ${\rm BUC}_R^\alpha(\X;\mathcal{L}^{(2)}(H_R;\R))$.

The proof of the case $0<\lambda<4|\zeta_R|$ use the following perturbation argument. Observe that $u$ solves
\begin{align*}
(4|\zeta_R|+1)y-Nu=f-(\lambda-4|\zeta_R|-1)u.
\end{align*}
By \eqref{archie} we know that $u\in {\rm BUC}_R^1(\X)$, consequentely $f-(\lambda-4|\zeta_R|-1)u$ belongs to ${\rm BUC}_R^\alpha(\X)$ and by the above arguments it follows that $u\in{\rm BUC}_R^{2+\alpha}(\X)$ and the esitmate \eqref{THM_SCHAUDER_STIMA} holds true.
\end{proof}

\subsection*{The case $f\in{\rm BUC}(\X)$}
We proceed to analyze the case $\alpha=0$. As announced in the introduction we need to introduce an appropriate Zygmund space.

\begin{defi}
Assume Hypothesis \ref{STR1}\eqref{STR1.1} holds true and let $Y$ be a Banach space with norm $\norm{\cdot}_Y$. The space $\mathcal{Z}_R(\X;Y)$ is the subspace of ${\rm BUC}(\X;Y)$ consisting of functions $F:\X\ra Y$ such that
\begin{align*}
[F]_{\mathcal{Z}_R(\X;Y)}:=\sup_{\substack{x\in\X\\ h\in H_R\setminus\set{0}}}\frac{\norm{F(x+2h)-2F(x+h)+F(x)}_Y}{\norm{h}_R},
\end{align*}
is finite.
\end{defi}

\noindent The space $\mathcal{Z}_R(\X;Y)$ is a Banach space if endowed with the norm
\[\norm{F}_{\mathcal{Z}_R(\X;Y)}:=\norm{F}_\infty+[F]_{\mathcal{Z}_R(\X;Y)}.\]
It is easy to see that every $H_R$-Lipschitz function belongs to $\mathcal{Z}_R(\X;Y)$, but, even when $\X=Y=H_R=\R$, there are bounded and continuous functions not belonging to $\mathcal{Z}_R(\X;Y)$ (see, for example, \cite{Tri95}).

\begin{thm}\label{Thm_Zyg}
Assume Hypotheses \ref{STR2} hold true. For any $\lambda>0$ and $f\in {\rm BUC}(\X)$ the solution $u$ of \eqref{intr_problema_stazionario}, introduced in \eqref{GS}, belongs to ${\rm BUC}^1(\X)$ and $\D_Ru\in \mathcal{Z}_R(\X;H_R)$. Moreover there exists a positive constant $C$, independent of $f$, such that
\begin{align}\label{estim_Zyg}
\|\D_Ru\|_{\mathcal{Z}_{R}(\X;\mathcal{L}(H_R;\R))}\leq C\norm{f}_\infty.
\end{align}
\end{thm}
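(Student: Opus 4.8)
The plan is to use the representation $u = \int_0^{+\infty} e^{-\lambda t} P(t) f\, dt$ together with the Zygmund-type estimates for $\D_R P(t) f$ coming from Theorem \ref{BEL} and Proposition \ref{boss}. As in the proof of Theorem \ref{THM_SCHAUDER}, I would first reduce to the case $\lambda > 4|\zeta_R|$: once the result is established there, if $0 < \lambda \le 4|\zeta_R|$ one writes $(4|\zeta_R|+1) u - N u = f - (\lambda - 4|\zeta_R| - 1) u$, and the right-hand side belongs to ${\rm BUC}(\X)$ (indeed $u$ is already bounded), so the Zygmund conclusion follows by the case already treated. For $\lambda > 4|\zeta_R|$, the bounds \eqref{boss1} with $i=1$ give $\|\D_R P(t) f\|_\infty \le K e^{3t|\zeta_R|} t^{-1/2}\|f\|_\infty$, which is integrable against $e^{-\lambda t}$; hence by dominated convergence $u \in {\rm BUC}^1_R(\X)$, $\D_R u = \int_0^{+\infty} e^{-\lambda t} \D_R P(t) f\, dt$, with $\|\D_R u\|_\infty \le C\|f\|_\infty$. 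The uniform continuity of $\D_R u$ follows from Theorem \ref{BEL} and Proposition \ref{Zone400} exactly as in the proof of Theorem \ref{THM_SCHAUDER}.

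The core of the argument is the Zygmund seminorm. Fix $x \in \X$ and $h \in H_R \setminus \{0\}$, and split the integral at $t = \|h\|_R^2$:
\begin{align*}
\|\D_R u(x+2h) - 2\D_R u(x+h) + \D_R u(x)\|_{\mathcal{L}(H_R;\R)} &\le \int_0^{\|h\|_R^2} e^{-\lambda t} \|\D_R P(t) f(x+2h) - 2\D_R P(t) f(x+h) + \D_R P(t) f(x)\|_{\mathcal{L}(H_R;\R)}\, dt \\
&\quad + \int_{\|h\|_R^2}^{+\infty} e^{-\lambda t} \|\D_R P(t) f(x+2h) - 2\D_R P(t) f(x+h) + \D_R P(t) f(x)\|_{\mathcal{L}(H_R;\R)}\, dt \\
&=: J_1 + J_2.
\end{align*}
For $J_1$ I would bound the second difference crudely by $4 \sup_{y} \|\D_R P(t) f(y)\|_{\mathcal{L}(H_R;\R)} \le 4 K e^{3t|\zeta_R|} t^{-1/2} \|f\|_\infty$ using \eqref{boss1} with $i=1$, and since $e^{-(\lambda - 3|\zeta_R|)t} \le 1$ on this range the integral $\int_0^{\|h\|_R^2} t^{-1/2}\, dt = 2\|h\|_R$ is exactly of order $\|h\|_R$. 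For $J_2$ I would instead use a second-order Taylor expansion along $h$: writing the second difference as $\int_0^1 \int_0^1 \D_R^3 P(t) f(x + (\sigma + \tau) h)(h,h,\cdot)\, d\sigma\, d\tau$ (after collecting the three terms into an integrated second-order remainder), and bounding $\|\D_R^3 P(t) f(y)\|_{\mathcal{L}^{(3)}(H_R;\R)} \le K e^{3t|\zeta_R|} t^{-3/2}\|f\|_\infty$ via \eqref{boss1} with $i=3$, one gets a factor $\|h\|_R^2$ and the integral $\int_{\|h\|_R^2}^{+\infty} t^{-3/2}\, dt = 2\|h\|_R^{-1}$, again producing order $\|h\|_R$. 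Adding $J_1$ and $J_2$ yields $[\D_R u]_{\mathcal{Z}_R(\X;\mathcal{L}(H_R;\R))} \le C\|f\|_\infty$, and combined with $\|\D_R u\|_\infty \le C\|f\|_\infty$ this gives \eqref{estim_Zyg}.

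The main technical point to handle carefully is the exact form of the second-order remainder for the second difference in $J_2$: one must verify that $F(x+2h) - 2F(x+h) + F(x)$ for $F = \D_R P(t) f$ can be written as a double integral of the third $H_R$-derivative against $(h,h,\cdot)$, which is legitimate because $P(t) f \in {\rm BUC}^3_R(\X)$ by Theorem \ref{BEL}; a clean way is to apply the standard one-variable identity $g(2) - 2g(1) + g(0) = \int_0^1 (1-|s|)\, g''(1+s)\, ds$ to $g(s) = \D_R P(t) f(x + (1+s)h)\,(\cdot)$ and then identify $g''$ with $\D_R^3 P(t) f(x+(1+s)h)(h,h,\cdot)$. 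Everything else is a repetition of the pattern already used for Theorem \ref{THM_SCHAUDER}: the only subtlety is that here $f$ carries no Hölder seminorm, so the gain comes entirely from the smoothing rates $t^{-1/2}$ and $t^{-3/2}$ of the semigroup derivatives rather than from regularity of the datum, which is precisely why the output is a Zygmund condition and not a Hölder one.
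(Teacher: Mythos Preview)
Your proof is correct and follows essentially the same approach as the paper: reduction to large $\lambda$ by the perturbation argument, splitting the time integral at $\|h\|_R^2$, a crude first-derivative bound on $[0,\|h\|_R^2]$ via \eqref{boss1} with $i=1$, and the third-derivative bound on $[\|h\|_R^2,\infty)$ via the double-integral representation of the second difference together with \eqref{boss1} with $i=3$. One small slip in your aside: the one-variable identity should read $g(2)-2g(1)+g(0)=\int_{-1}^{1}(1-|s|)\,g''(1+s)\,ds$ (the integration is over $[-1,1]$, not $[0,1]$), and with your parametrization the second difference is $g(1)-2g(0)+g(-1)$; but your double-integral form $\int_0^1\int_0^1 \D_R^3 P(t)f(x+(\sigma+\tau)h)(h,h,\cdot)\,d\sigma\,d\tau$ is correct and is exactly what the paper uses.
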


\begin{proof}
We prove the statement for $\lambda>3|\zeta_R|$, the general case follows using the same ideas used at the end of the proof of Theorem \ref{THM_SCHAUDER}.
Following the same ideas of the proof of Theorem \ref{THM_SCHAUDER} and using \eqref{Resident3} we get $u\in {\rm BUC}^{1+\alpha}_R(\X)$ for every $\alpha\in(0,1)$. In particular $\D_Ru$ is bounded and uniformly continuous. To prove that $\D_Ru$ belongs to $\mathcal{Z}_R(\X,H_R)$ we consider the functions $a,b:\X\ra\R$ defined, for any $x\in\X$ and $h\in H_R$, as
\begin{align*}
a(x):=\int_0^{\norm{h}_R^2}e^{-\lambda t} \D_RP(t)f(x)dt;\qquad b(x):=\int_{\norm{h}_R^2}^{+\infty}e^{-\lambda t} \D_RP(t)f(x)dt.
\end{align*}
For any $x\in\X$, $h\in H_R$ and $t>0$, by \eqref{boss1} (with $i=1$), we have 
\begin{align}
\|a(x+2h)&-2a(x+h)+a(x)\|_{\mathcal{L}(H_R;\R)}\notag\\
&\leq\int_0^{\|h\|_R^2}e^{-\lambda t}\|\D_RP(t)f(x+2h)-2\D_RP(t)f(x+h)+\D_RP(t)f(x)\|_{\mathcal{L}(H_R;\R)}dt\notag\\
&\leq 4 K\pa{\int_0^{\|h\|^2_R}\frac{e^{-(\lambda-3|\zeta_R|)t}}{t^{1/2}}dt}\|f\|_\infty\leq 4 K\pa{\int_0^{\|h\|^2_R}t^{-1/2}dt}\|f\|_\infty\notag\\
&= 8K\|h\|_R\|f\|_\infty,\label{Freeman}
\end{align}
where $K$ is the constant appearing in Proposition \ref{boss}. Before proceeding we need some intermediate estimates. First of all observe that for every $x\in\X$, $h,k\in H_R$ and $t>0$
\begin{align}
(\D_RP(t)f(x+2h)-\D_RP(t)f(x+h))k &=\int_0^1\D_R^2P(t)f(x+(1+\sigma)h)(h,k)d\sigma;\label{VPN}\\
(\D_RP(t)f(x+h)-\D_RP(t)f(x))k &=\int_0^1\D_R^2P(t)f(x+\sigma h)(h,k)d\sigma.\label{VPN1}
\end{align}
So combining \eqref{VPN} and \eqref{VPN1} and using \eqref{boss1} (with $i=3$),for any $x\in\X$, $h,k\in H_R$ and $t>0$ we get
\begin{align}
\|\D_RP(t)f(x+2h)&-2\D_RP(t)f(x+h)+\D_RP(t)f(x)\|_{\mathcal{L}(H_R;\R)}\notag\\
&=\sup_{\|k\|_R=1}|(\D_RP(t)f(x+2h)-2\D_RP(t)f(x+h)+\D_RP(t)f(x))k|\notag\\
&=\sup_{\|k\|_R=1}\abs{\int_0^1\pa{\D_R^2P(t)f(x+(1+\sigma)h)-\D_R^2P(t)f(x+\sigma h)}(h,k)d\sigma}\notag\\
&=\sup_{\|k\|_R=1}\abs{\int_0^1\int_0^1\D_R^3P(t)f(x+(\tau+\sigma)h)(h,h,k)d\tau d\sigma}\notag\\
&\leq \sup_{y\in\X}\|\D_R^3P(t)f(y)\|_{\mathcal{L}^{(3)}(H_R;\R)}\|h\|_R^2\leq K\frac{e^{3t|\zeta_R|}}{t^{3/2}}\|h\|_R^2\Vert f\Vert_{\infty},\label{roundabout}
\end{align}
where $K$ is the constant introduced in Proposition \ref{boss}. By \eqref{roundabout} for any $x\in\X$, $h,k\in H_R$ and $t>0$ we get
\begin{align}
\|b(x+2h)&-2b(x+h)+b(x)\|_{\mathcal{L}(H_R;\R)}\notag\\
&\leq\int_{\|h\|_R^2}^{+\infty}e^{-\lambda t}\|\D_RP(t)f(x+2h)-2\D_RP(t)f(x+h)+\D_RP(t)f(x)\|_{\mathcal{L}(H_R;\R)} dt\notag\\
&\leq K\pa{\int_{\|h\|_R^2}^{+\infty}\frac{e^{-(\lambda-3|\zeta_R|)t}}{t^{3/2}}dt}\|h\|_R^2\Vert f\Vert_{\infty}\leq K\pa{\int_{\|h\|_R^2}^{+\infty}t^{-3/2}dt}\|h\|_R^2\Vert f\Vert_{\infty}\notag\\
&=2K\|h\|_R\Vert f\Vert_{\infty},\label{Freeman1}
\end{align}
where $K$ is again the constant appearing in Proposition \ref{boss}. Combining \eqref{Freeman} and \eqref{Freeman1} we get \eqref{estim_Zyg}.
\end{proof}

\subsection*{The evolution equation} 
By a procedure similar to the one described in the proofs of Theorem \ref{THM_SCHAUDER} and Theorem \ref{Thm_Zyg} we can obtain similar results for the mild solution $v$ of the evolution equation \eqref{evol_prob}, introduced in \eqref{mild_evol}. To do so we need to introduce another Banach space.

\begin{defi}
Assume Hypothesis \ref{STR1}\eqref{STR1.1} holds true and let $Y$ be a Banach space with norm $\norm{\cdot}_Y$. For any $\alpha\in(0,1)$, $k=0,1,2,\ldots$ and $T>0$ we define ${\rm BUC}_R^{0,k+\alpha}([0,T]\times\X; Y)$ as the set of continuous functions $g:[0,T]\times\X\ra Y$ , that are separately uniformly continuous and such that 
\begin{align*}
\|g\|_{{\rm BUC}_R^{0,k+\alpha}([0,T]\times \X;Y)}:=\sup_{t\in[0,T]}\|g(t,\cdot)\|_{{\rm BUC}_R^{k+\alpha}(\X;Y)},
\end{align*}
if finite. If $Y=\R$ we write ${\rm BUC}_R^{0,k+\alpha}([0,T]\times \X)$.
\end{defi}

\noindent For any $T>0$, $\alpha\in(0,1)$ and $k=0,1,2,\ldots$ the space ${\rm BUC}_R^{0,k+\alpha}([0,T]\times\X; Y)$ is a Banach space if endowed with the norm $\norm{\cdot}_{{\rm BUC}_R^{0,k+\alpha}([0,T]\times \X;Y)}$. Before stating and proving the main result of this subsection we need a preliminary lemma.

\begin{lemm}\label{NFT}
Assume Hypotheses \ref{STR2} hold true. For every $T>0$, $\alpha\in(0,1)$, $k=0,1,2,3$ and $f\in{\rm BUC}_R^{k+\alpha}(\X)$, the map $(t,x)\mapsto P(t)f(x)$ belongs to ${\rm BUC}_R^{0,k+\alpha}([0,T]\times\X)$.
\end{lemm}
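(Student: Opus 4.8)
The plan is to show the asserted regularity in two independent pieces: (a) for each fixed $t\in[0,T]$, the map $x\mapsto P(t)f(x)$ lies in ${\rm BUC}_R^{k+\alpha}(\X)$ with a norm bound uniform in $t\in[0,T]$; and (b) the map $t\mapsto P(t)f$ is continuous from $[0,T]$ into a suitable space, so that together with (a) one gets joint continuity and separate uniform continuity. For part (a) I would argue by induction on $k$. The base case $k=0$ is exactly the combination of Theorem \ref{BEL} (which gives that $P(t)f$ is three times $H_R$-differentiable) with Proposition \ref{Holder}: for $\varphi=f\in{\rm BUC}_R^\alpha(\X)$, the estimate \eqref{Zone300} yields, for $i=1,2,3$,
\begin{align*}
\|\D^i_R P(t)f(x)\|_{\mathcal{L}^{(i)}(H_R;\R)}\leq K_\alpha\,\frac{e^{4T|\zeta_R|}}{t^{(i-\alpha)/2}}\,\|f\|_{{\rm BUC}_R^\alpha(\X)},\qquad t\in(0,T],
\end{align*}
and the H\"older seminorm of $\D_R P(t)f$ is controlled as in \eqref{Resident3}; the needed uniformity in $t\in[0,T]$ comes from bounding the exponential by $e^{4T|\zeta_R|}$ and noting that the right-hand side of the ${\rm BUC}_R^{k+\alpha}$-norm (for $k=0$, just sup plus H\"older seminorm) is finite uniformly for $t$ in a compact interval.

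For the inductive step I would use the semigroup property together with the commutation of $\D_R$ with $P(t)$ that is implicit in Theorem \ref{BEL}. If $f\in{\rm BUC}_R^{k+\alpha}(\X)$ with $k\geq 1$, then $\D_R^j f\in{\rm BUC}_R^{(k-j)+\alpha}(\X;\mathcal{L}^{(j)}(H_R;\R))$ for $j\leq k$; applying the case-$(k-j)$ estimate to each component function and using that $\D_R^i(P(t)f)$ can be written, via \eqref{FDeq1D}–\eqref{FDeq2D} and the chain rule of Corollary \ref{accatena}, in terms of $P(t)$ (or $P(t/2)$) applied to lower-order $H_R$-derivatives of $f$ convolved against the processes $\J_G^m X$, one gets the bound on $\|P(t)f\|_{{\rm BUC}_R^{k+\alpha}(\X)}$ uniform in $t\in[0,T]$. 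Concretely, the cleanest route is: for $k=1$ use Proposition \ref{bossD} (which treats $\varphi\in\mathfrak{X}$) interpolated against Proposition \ref{Holder} to cover ${\rm BUC}_R^{1+\alpha}$; for $k=2,3$ iterate, writing $\D_R^i P(t)f=\D_R^{i-1}P(t)(\D_R f)\cdot(\text{derivatives of }X)$ and invoking the already-established case for $\D_R f\in {\rm BUC}_R^{(k-1)+\alpha}$. In every step the degenerate factor $t^{-(i-\alpha)/2}$ stays integrable-free because we only need an $L^\infty_t$ bound here, not an integrated one.

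Part (b), continuity in $t$, follows from strong continuity of $\{P(t)\}_{t\geq0}$ on ${\rm BUC}(\X)$ together with the uniform-in-$t$ derivative bounds from part (a): given the equiboundedness of $\{\D_R^i P(t)f:t\in[0,T]\}$ in ${\rm BUC}(\X;\mathcal{L}^{(i)}(H_R;\R))$ for $i\leq k$ and the equicontinuity supplied by the $\alpha$-H\"older bound on the top derivative, an Ascoli–Arzel\`a / interpolation argument upgrades the pointwise (in $x$) continuity $t\mapsto P(t)f(x)$ to continuity of $t\mapsto P(t)f$ in ${\rm BUC}_R^{k+\alpha'}$ for any $\alpha'<\alpha$, which in particular gives separate uniform continuity of $(t,x)\mapsto \D_R^i P(t)f(x)$ as required by the definition of ${\rm BUC}_R^{0,k+\alpha}$. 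I expect the main obstacle to be precisely this joint/separate continuity bookkeeping near $t=0$: the derivative estimates blow up like $t^{-(i-\alpha)/2}$ as $t\to0^+$, so one cannot get continuity of $\D_R^i P(t)f$ up to $t=0$; one must be careful to only claim what the definition of ${\rm BUC}_R^{0,k+\alpha}$ actually requires (finiteness of $\sup_{t\in[0,T]}\|P(t)f(\cdot)\|_{{\rm BUC}_R^{k+\alpha}}$ plus separate continuity), and for the latter use that $f\in{\rm BUC}_R^{k+\alpha}(\X)$ itself is the value at $t=0$, so $P(t)f\to f$ in a weak enough sense as $t\to0^+$ by weak continuity of the semigroup. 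Modulo this care, everything reduces to the estimates already proved in Sections \ref{Interpolation}–\ref{Regularity_Semigroup}.
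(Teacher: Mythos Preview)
There is a genuine gap: your route via the smoothing estimates of Propositions \ref{boss}, \ref{bossD}, \ref{Holder} cannot produce the bound $\sup_{t\in[0,T]}\|P(t)f\|_{{\rm BUC}_R^{k+\alpha}(\X)}<\infty$, because every estimate you invoke carries a factor $t^{-(i-\alpha)/2}$ that blows up as $t\to 0^+$. Take already $k=0$: you need $[P(t)f]_{{\rm BUC}_R^\alpha(\X)}$ bounded uniformly down to $t=0$. From \eqref{Zone300} with $i=1$ you get a Lipschitz bound with constant $\sim t^{-(1-\alpha)/2}$, and interpolating this against $\|P(t)f\|_\infty\leq\|f\|_\infty$ to recover an $\alpha$-H\"older seminorm still leaves a factor $t^{-\alpha(1-\alpha)/2}$. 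You flag the $t\to 0^+$ issue only for the \emph{continuity} part (b), but it already kills the \emph{supremum} part (a); the definition of ${\rm BUC}_R^{0,k+\alpha}$ explicitly asks for the supremum over all $t\in[0,T]$, including arbitrarily small $t$.

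The paper avoids this entirely by not passing through the smoothing estimates. For $k=0$ it uses directly the pathwise bound \eqref{1R} on the mild solution: since $X(t,x+h)-X(t,x)=\int_0^1\D_GX(t,x+\sigma h)h\,d\sigma$ and $\|\D_GX(t,x)h\|_R\leq e^{\zeta_R t}\|h\|_R$, one gets
\[
|P(t)f(x+h)-P(t)f(x)|\leq \E\big[[f]_{{\rm BUC}_R^\alpha(\X)}\|X(t,x+h)-X(t,x)\|_R^\alpha\big]\leq e^{\alpha\zeta_R t}[f]_{{\rm BUC}_R^\alpha(\X)}\|h\|_R^\alpha,
\]
which is bounded (not blowing up) on $[0,T]$. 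The higher-$k$ cases are then handled ``by similar arguments'', i.e.\ by the representation \eqref{FDeq1D} and its iterates, which express $\D_R^iP(t)f$ through $\D_R^if$ composed with $X(t,\cdot)$ and the processes $\D_G^mX(t,x)$, all of which obey the non-blowing-up bounds \eqref{1R}--\eqref{3R}. The point is that when $f$ already has $k$ derivatives you should differentiate \emph{inside} the expectation (via Corollary \ref{accatena}), not use the Bismut--Elworthy--Li formula, which is a smoothing device and necessarily singular at $t=0$.
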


\begin{proof}
We just show the case $k=0$, since the other cases follow by similar arguments. Observe that the fact that, for every $t\in[0,T]$, the map $x\mapsto P(t)f(x)$ is uniformly continuous follows by the boundness and uniform continuity of $f$ and Proposition \ref{Zone400} (with $\mathcal{M}=\X$, $d_{\mathcal{M}}=\norm{\cdot}$, $Y=\Omega$, $\mu=\mathbb{P}$ and $Z=\R$), while the uniform continuity of $t\mapsto P(t)f(x)$ follows by its continuity and the compactness of $[0,T]$. To obtain that the map $(t,x)\mapsto P(t)f(x)$ belongs to ${\rm BUC}_R^{0,\alpha}([0,T]\times\X)$ it is enough to note that for any $t>0$
\begin{align}\label{Man1}
\norm{P(t)f}_\infty\leq \norm{f}_\infty,
\end{align}
and that for any $t>0$, $x\in\X$ and $h\in H_R$, by \ref{1R} it holds
\begin{align}
|P(t)f(x+h)-P(t)f(x)|&=\big|\E[f(X(t,x+h))]-\E[f(X(t,x))]\big|\notag\\
&\leq \E\big[\norm{X(t,x+h)-X(t,x)}_{R}^\alpha [f]_{{\rm BUC}_R^\alpha(\X)}\big]\notag\\
&\leq e^{\alpha \zeta_R t}[f]_{{\rm BUC}_R^\alpha(\X)}\norm{h}_R^\alpha.\label{Man2}
\end{align}
Indeed, by \eqref{Man1} and \eqref{Man2}, letting $g(t,x):=P(t)f(x)$ for $t\in[0,T]$ and $x\in\X$ we obtain
\begin{align*}
\|g\|_{{\rm BUC}_R^{0,k+\alpha}([0,T]\times \X)}&=\sup_{t\in [0,T]}(\|P(t)f\|_\infty+[P(t)f]_{{\rm BUC}_R^\alpha(\X)})\leq \max\{1,e^{\alpha\zeta_R T}\}\|f\|_{{\rm BUC}_R^\alpha(\X)}.
\end{align*}
This concludes the proof.
\end{proof}

The following result is in the same spirit as the main results of \cite{KCL75} and \cite{KCL80}.

\begin{thm}\label{Thm_evol}
Assume Hypotheses \ref{STR2} hold true and let $T>0$ and $\alpha\in(0,1)$. If $f\in{\rm BUC}_R^{2+\alpha}(\X)$ and $g\in {\rm BUC}_R^{0,\alpha}([0,T]\times\X)$, then the mild solution $v$ of \eqref{evol_prob}, introduced in \eqref{mild_evol}, belongs to ${\rm BUC}_R^{0,2+\alpha}([0,T]\times\X)$ and there exists a positive constant $C=C(T,\alpha)$, independent of $f$ and $g$, such that
\begin{align*}
\|v\|_{{\rm BUC}_R^{0,2+\alpha}([0,T]\times\X)}\leq C\left(\|f\|_{{\rm BUC}_R^{2+\alpha}(\X)}+\|g\|_{{\rm BUC}_R^{0,\alpha}([0,T]\times\X)}\right).
\end{align*}
\end{thm}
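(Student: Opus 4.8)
The plan is to split the mild solution \eqref{mild_evol} as $v(t,x)=P(t)f(x)+w(t,x)$, where $w(t,x):=\int_0^t P(t-s)g(s,\cdot)(x)\,ds$, and treat the two pieces separately. The first piece is immediate: since $f\in{\rm BUC}_R^{2+\alpha}(\X)$, Lemma \ref{NFT} with $k=2$ gives that $(t,x)\mapsto P(t)f(x)$ belongs to ${\rm BUC}_R^{0,2+\alpha}([0,T]\times\X)$, and the (sketched) computation in its proof also yields $\sup_{t\in[0,T]}\|P(t)f\|_{{\rm BUC}_R^{2+\alpha}(\X)}\le C(T,\alpha)\|f\|_{{\rm BUC}_R^{2+\alpha}(\X)}$. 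Hence everything reduces to showing $w\in{\rm BUC}_R^{0,2+\alpha}([0,T]\times\X)$ with the corresponding bound in terms of $\|g\|_{{\rm BUC}_R^{0,\alpha}([0,T]\times\X)}$.

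For fixed $t\in(0,T]$ I would first prove that $w(t,\cdot)$ is twice $H_R$-differentiable with $\D_R w(t,x)=\int_0^t\D_R P(t-s)g(s,\cdot)(x)\,ds$ and $\D^2_R w(t,x)=\int_0^t\D^2_R P(t-s)g(s,\cdot)(x)\,ds$, arguing exactly as in the passage \eqref{load1}--\eqref{load2} of the proof of Theorem \ref{THM_SCHAUDER}: by \eqref{Zone300} (with $i=1,2$) and the mean value theorem one has, for $x\in\X$ and $h\in H_R$,
\[
\left|P(t-s)g(s,\cdot)(x+h)-P(t-s)g(s,\cdot)(x)-\D_R P(t-s)g(s,\cdot)(x)h\right|\le \frac{2K_\alpha e^{4T|\zeta_R|}}{(t-s)^{(1-\alpha)/2}}\,\|h\|_R\,\|g(s,\cdot)\|_{{\rm BUC}_R^\alpha(\X)},
\]
and similarly for the first $H_R$-difference of $\D_R P(t-s)g(s,\cdot)$ with the exponent $(2-\alpha)/2$. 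Since $(1-\alpha)/2$ and $(2-\alpha)/2$ are both $<1$, the singularities at $s=t$ are integrable, so dividing by $\|h\|_R$ and invoking dominated convergence identifies the $H_R$-derivatives of $w(t,\cdot)$ with the stated integrals; the same estimates give $\sup_{t\in[0,T]}\big(\|w(t,\cdot)\|_\infty+\|\D_R w(t,\cdot)\|_\infty+\|\D^2_R w(t,\cdot)\|_\infty\big)\le C(T,\alpha)\|g\|_{{\rm BUC}_R^{0,\alpha}([0,T]\times\X)}$.

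Next I would establish the $H_R$-Hölder bound on $\D^2_R w(t,\cdot)$, uniformly in $t$. Fix $t\in(0,T]$, $x\in\X$, $h\in H_R$, and split $\int_0^t=\int_{(t-\|h\|_R^2)^+}^t+\int_0^{(t-\|h\|_R^2)^+}$. On the first piece one bounds the difference $\|\D^2_R P(t-s)g(s,\cdot)(x+h)-\D^2_R P(t-s)g(s,\cdot)(x)\|$ by twice $\sup_{y}\|\D^2_R P(t-s)g(s,\cdot)(y)\|$ and uses \eqref{Zone300} with $i=2$; on the second piece one writes the increment as $\int_0^1\D^3_R P(t-s)g(s,\cdot)(x+\sigma h)(h,\cdot,\cdot)\,d\sigma$ and uses \eqref{Zone300} with $i=3$. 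Bounding $e^{4(t-s)|\zeta_R|}\le e^{4T|\zeta_R|}$, the two contributions reduce to $\int_0^{\|h\|_R^2}r^{(\alpha-2)/2}\,dr$ and $\|h\|_R\int_{\|h\|_R^2}^{+\infty}r^{(\alpha-3)/2}\,dr$, exactly as in \eqref{Plutonia2}--\eqref{Plutonia3}, both of order $\|h\|_R^\alpha$. This yields $[\D^2_R w(t,\cdot)]_{{\rm BUC}_R^\alpha(\X;\mathcal{L}^{(2)}(H_R;\R))}\le C(T,\alpha)\|g\|_{{\rm BUC}_R^{0,\alpha}([0,T]\times\X)}$ for every $t\in[0,T]$.

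Finally I would verify the two remaining requirements in the definition of ${\rm BUC}_R^{0,2+\alpha}$. For each fixed $t$, the maps $x\mapsto w(t,x)$ and $x\mapsto\D_R w(t,x)$ are uniformly continuous by Proposition \ref{Zone400} applied under the integral sign (as in the proof of Theorem \ref{THM_SCHAUDER}), while $x\mapsto\D^2_R w(t,x)$ is uniformly continuous since it is $H_R$-Hölder by the previous step; combined with the corresponding facts for $P(\cdot)f$ from Lemma \ref{NFT}, this gives uniform continuity of $v(t,\cdot)$, $\D_R v(t,\cdot)$, $\D^2_R v(t,\cdot)$ in $x$. Joint continuity of $(t,x)\mapsto v(t,x)$ — hence, by compactness of $[0,T]$, uniform continuity of $t\mapsto v(t,x)$ — follows from joint continuity of $(t,x)\mapsto P(t)f(x)$ and a standard dominated-convergence argument for the convolution, using the integrability of the singularity $(t-s)^{-(1-\alpha)/2}$ and the strong continuity of $\{P(t)\}_{t\ge0}$. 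Collecting the estimates then gives $\|v\|_{{\rm BUC}_R^{0,2+\alpha}([0,T]\times\X)}\le C(T,\alpha)\big(\|f\|_{{\rm BUC}_R^{2+\alpha}(\X)}+\|g\|_{{\rm BUC}_R^{0,\alpha}([0,T]\times\X)}\big)$. The main obstacle is exactly this convolution analysis: differentiating $w$ twice under the integral sign and carrying out the time-dependent splitting while keeping the interplay between the smoothing singularities $(t-s)^{-(i-\alpha)/2}$ from Proposition \ref{Holder} and the exponential factors $e^{4(t-s)|\zeta_R|}$ under control uniformly over $[0,T]$, bearing in mind that — as recalled in the introduction — no regularity is gained in the time variable, so only continuity (not Hölder continuity) of $t\mapsto v(t,x)$ may be expected.
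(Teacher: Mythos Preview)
Your proposal is correct and follows essentially the same route as the paper: split $v=P(\cdot)f+V$, handle $P(\cdot)f$ via Lemma \ref{NFT}, differentiate the convolution under the integral sign using the integrable singularities from \eqref{Zone300}, and obtain the $H_R$-H\"older bound on $\D_R^2 V(t,\cdot)$ by splitting the integral at $\min\{t,\|h\|_R^2\}$ (your split at $(t-\|h\|_R^2)^+$ in the $s$-variable is the same after the change of variables $s\mapsto t-s$). One small imprecision: $H_R$-H\"older continuity of $\D_R^2 w(t,\cdot)$ alone does not give uniform continuity in $\X$; as for the lower-order derivatives, you should invoke Proposition \ref{Zone400} (together with Theorem \ref{BEL}) for $\D_R^2 w(t,\cdot)$ as well.
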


\begin{proof}
We just give a sketch of the proof since it is similar to the proofs of Theorems \ref{THM_SCHAUDER} and \ref{Thm_Zyg}. By Lemma \ref{NFT} the maps $(t,x)\mapsto P(t)f(x)$ belongs to ${\rm BUC}_R^{0,2+\alpha}([0,T]\times\X)$. So we just need to consider the function
\begin{align*}
V(t,x):=\int_0^tP(s)g(t-s,\cdot)(x)ds,\qquad t\in[0,T],\ x\in\X.
\end{align*}
By the same arguments used in the proof of Theorem \ref{THM_SCHAUDER} we obtain that, for every $t\in[0,T]$, the map $x\mapsto V(t,x)$ belongs to ${\rm BUC}_R^2(\X)$, that 
\begin{align*}
\D_R^2V(t,x)=\int_0^t\D_R^2P(s)g(t-s,\cdot)(x)ds,\qquad t\in[0,T],\ x\in\X,
\end{align*}
and that there exists a positive constant $C=C(T,\alpha,\zeta_R)$ such that
\begin{align*}
\sup_{t\in[0,T]}\|V(t,\cdot)\|_{{\rm BUC}_R^2(\X)}\leq C\|g\|_{{\rm BUC}_R^{0,\alpha}([0,T]\times\X)}.
\end{align*}
The uniform continuity of the map $t\mapsto V(t,x)$, for every $x\in\X$, is standard. Finally to prove that the map $x\mapsto\D_R^2V(t,x)$ belongs to ${\rm BUC}_R^\alpha(\X;\mathcal{L}^{(2)}(H_R;\R))$, for every $t\in[0,T]$, we can argue as in the proof of Theorem \ref{THM_SCHAUDER}, introducing the functions $a,b:\X\ra\mathcal{L}^{(2)}(H_R;\R)$ defined, for $h\in H_R$ and $y\in\X$, as
\begin{align*}
a(y):=\int_0^{\min\{t,\|h\|_R^2\}}\D_R^2 P(s)g(t-s,\cdot)(y)ds,\qquad b(y):=\int_{\min\{t,\|h\|_R^2\}}^t\D_R^2 P(s)g(t-s,\cdot)(y)ds.
\end{align*}
Proceeding as in the proof of Theorem \ref{THM_SCHAUDER} we get that there exists a positive constant $C'=C'(T,\alpha,\zeta_R)$ such that
\begin{align*}
\sup_{t\in[0,T]}[\D_R^2V(t,\cdot)]_{{\rm BUC}_R^\alpha(\X;\mathcal{L}^{(2)}(H_R;\R))}\leq C'\sup_{t\in[0,T]}\|g(t,\cdot)\|_{{\rm BUC}_R^\alpha(\X)}.
\end{align*}
This conclude the proof.
\end{proof}

\subsection*{The $H_R$-H\"older perturbation case} Let $A:\Dom(A)\subseteq\X\ra\X$ and $R\in\mathcal{L}(\X)$ such that they satisfy Hypotheses \ref{STR2}. We consider the Ornstein--Uhlenbeck semigroup $\{T(t)\}_{t\geq 0}$ defined by
\[
T(t)\varphi(x):=\int_\X\varphi(e^{tA}x+y)\mathcal{N}(0,Q_t)(dy),\qquad t> 0,\ x\in\X,\ \varphi\in {\rm BUC}(\X);
\]
where, for any $t>0$, we let $Q_t:=\int^t_0e^{sA}R^2e^{sA^*}ds$ and $\mathcal{N}(0,Q_t)$ is the Gaussian measure on $\X$ with mean zero and covariance operator $Q_t$. The semigroup $\{T(t)\}_{t\geq 0}$ is weakly continuous in ${\rm BUC}(\X)$ (see \cite[Appendix B]{CER1}) and its weak generator $L:{\rm Dom}(L)\subseteq {\rm BUC}(\X)\ra{\rm BUC}(\X)$ is the unique closed operator such that
\begin{align*}
R(\lambda,L)f(x)=\int_0^{+\infty} e^{-\lambda s}T(s)f(x)ds,\qquad \lambda>0,\ x\in\X,\ f\in{\rm BUC}(\X).
\end{align*}
Let $\alpha\in (0,1)$ and let $L_\alpha:{\rm Dom}(L_\alpha)\subseteq {\rm BUC}_R^{\alpha}(\X)\ra{\rm BUC}_R^{\alpha}(\X)$ be the part of $L$ in ${\rm BUC}^\alpha_R(\X)$. For $F\in{\rm BUC}^\alpha_R(\X;H_R)$ we consider the stationary equation
\begin{equation}\label{variante}
\lambda u-L_\alpha u-\langle F,\J_Ru\rangle_R=f,\qquad \lambda>0,\ f\in{\rm BUC}_R^\alpha(\X).
\end{equation}
We stress that the following stochastic partial differential equation
\begin{gather*}
\eqsys{
dX(t,x)=\big[AX(t,x)+F(X(t,x))\big]dt+RdW(t), & t>0;\\
X(0,x)=x\in \X,
}
\end{gather*}
may not be well posed and so we do not know if the transition semigroup \eqref{intr_trans} is well defined.
%
We refer to \cite{AD-MA-PR1} for a study of the case when $F=RG$ for some $G\in C_b^\alpha(\X;\X)$ and to \cite{DFPR13} for the case in which $F\in B_b(\X;\X)$ and $A$ is a Laplacian type operator. However it is possible to prove Schauder regularity results for the solution of \eqref{variante} using another technique (see, for example, \cite{DA6}).

\begin{prop}\label{SCHvar}
Let $A$ and $R$ be two operators satisfying Hypotheses \ref{STR2}, let $\alpha\in(0,1)$ and let $f\in {\rm BUC}_R^{\alpha}(\X)$. For any $\lambda>0$ there exists a constant $C_{\alpha,\lambda}>0$ such that if $\|F\|_{{\rm BUC}^\alpha_R(\X;H_R)}\leq C_{\alpha,\lambda}^{-1}$, then \eqref{variante} has a unique solution $u$ belonging to ${\rm BUC}^{2+\alpha}_R(\X)$.
\end{prop}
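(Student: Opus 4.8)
The plan is to treat \eqref{variante} as a fixed point equation for the resolvent of $L$ and apply Theorem \ref{THM_SCHAUDER}. Taking $G\equiv 0$ in \eqref{eqF02}, the data $(A,R,G)$ satisfies Hypotheses \ref{STR2} (this is the standing assumption on $A$ and $R$), the associated transition semigroup \eqref{intr_trans} coincides with the Ornstein--Uhlenbeck semigroup $\{T(t)\}_{t\geq 0}$, and its weak generator is $L$. Hence Theorem \ref{THM_SCHAUDER} applies with $N=L$: for every $\lambda>0$ and $g\in{\rm BUC}^\alpha_R(\X)$ one has $R(\lambda,L)g\in{\rm BUC}^{2+\alpha}_R(\X)$ with $\|R(\lambda,L)g\|_{{\rm BUC}^{2+\alpha}_R(\X)}\leq C\|g\|_{{\rm BUC}^\alpha_R(\X)}$, $C=C(\lambda,\alpha)>0$. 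In particular $R(\lambda,L)$ maps ${\rm BUC}^\alpha_R(\X)$ into ${\rm BUC}^{2+\alpha}_R(\X)\subseteq{\rm BUC}^\alpha_R(\X)$, so its restriction to ${\rm BUC}^\alpha_R(\X)$ is the resolvent $R(\lambda,L_\alpha)$ of the part $L_\alpha$.

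The second, and main, ingredient is a bilinear estimate: for $F\in{\rm BUC}^\alpha_R(\X;H_R)$ and $u\in{\rm BUC}^{2+\alpha}_R(\X)$ the map $x\mapsto\langle F(x),\D_Ru(x)\rangle_R$ (identifying $\D_Ru(x)$ with the $H_R$-gradient $\nabla_Ru(x)$) belongs to ${\rm BUC}^\alpha_R(\X)$ and
\begin{align*}
\|\langle F,\D_Ru\rangle_R\|_{{\rm BUC}^\alpha_R(\X)}\leq c\,\|F\|_{{\rm BUC}^\alpha_R(\X;H_R)}\,\|u\|_{{\rm BUC}^{2+\alpha}_R(\X)}
\end{align*}
for an absolute constant $c$. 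Boundedness follows from the Cauchy--Schwarz inequality in $H_R$, and uniform continuity from the fact that $F$ and $\nabla_Ru$ are bounded and uniformly continuous as maps $\X\to H_R$ (for $\nabla_Ru$ this uses $u\in{\rm BUC}^2_R(\X)$ together with the Riesz isometry $H_R^*\cong H_R$). For the $H_R$-H\"older seminorm one splits, for $x\in\X$ and $h\in H_R$,
\begin{align*}
\langle F(x+h),\nabla_Ru(x+h)\rangle_R-\langle F(x),\nabla_Ru(x)\rangle_R&=\langle F(x+h)-F(x),\nabla_Ru(x+h)\rangle_R\\
&\quad+\langle F(x),\nabla_Ru(x+h)-\nabla_Ru(x)\rangle_R,
\end{align*}
and bounds the first term by $[F]_{{\rm BUC}^\alpha_R(\X;H_R)}\|\nabla_Ru\|_\infty\|h\|_R^\alpha$ and the second by $\|F\|_\infty\|\nabla_Ru(x+h)-\nabla_Ru(x)\|_R$, using $\|\nabla_Ru(x+h)-\nabla_Ru(x)\|_R\leq\|\D_R^2u\|_\infty\|h\|_R\leq\|\D_R^2u\|_\infty\|h\|_R^\alpha$ when $\|h\|_R\leq 1$ and $\|\nabla_Ru(x+h)-\nabla_Ru(x)\|_R\leq 2\|\nabla_Ru\|_\infty\leq 2\|\nabla_Ru\|_\infty\|h\|_R^\alpha$ when $\|h\|_R>1$. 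Since $\|\nabla_Ru\|_\infty=\|\D_Ru\|_\infty$ and $\|\D_R^2u\|_\infty$ are both dominated by $\|u\|_{{\rm BUC}^{2+\alpha}_R(\X)}$, and $\|F\|_\infty,[F]_{{\rm BUC}^\alpha_R(\X;H_R)}\leq\|F\|_{{\rm BUC}^\alpha_R(\X;H_R)}$, the displayed estimate follows.

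With these two facts, fix $\lambda>0$ and define $\Gamma\colon{\rm BUC}^{2+\alpha}_R(\X)\to{\rm BUC}^{2+\alpha}_R(\X)$ by $\Gamma(u):=R(\lambda,L)(f+\langle F,\D_Ru\rangle_R)$. This is well posed by the first two steps, and for $u_1,u_2\in{\rm BUC}^{2+\alpha}_R(\X)$,
\begin{align*}
\|\Gamma(u_1)-\Gamma(u_2)\|_{{\rm BUC}^{2+\alpha}_R(\X)}\leq Cc\,\|F\|_{{\rm BUC}^\alpha_R(\X;H_R)}\,\|u_1-u_2\|_{{\rm BUC}^{2+\alpha}_R(\X)}.
\end{align*}
Setting $C_{\alpha,\lambda}:=2Cc$, if $\|F\|_{{\rm BUC}^\alpha_R(\X;H_R)}\leq C_{\alpha,\lambda}^{-1}$ then $\Gamma$ is a $1/2$-contraction on the Banach space ${\rm BUC}^{2+\alpha}_R(\X)$, hence has a unique fixed point $u$. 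By the identity $(\lambda-L)R(\lambda,L)={\rm Id}$ on ${\rm BUC}(\X)$, this $u$ lies in $\Dom(L)$ and satisfies $\lambda u-Lu=f+\langle F,\D_Ru\rangle_R\in{\rm BUC}^\alpha_R(\X)$, so $u\in\Dom(L_\alpha)$, $L_\alpha u=Lu$, and $u$ solves \eqref{variante}. Conversely, any solution of \eqref{variante} belonging to ${\rm BUC}^{2+\alpha}_R(\X)$ satisfies $u=R(\lambda,L_\alpha)(f+\langle F,\D_Ru\rangle_R)=\Gamma(u)$, so equals the fixed point; this gives uniqueness in ${\rm BUC}^{2+\alpha}_R(\X)$. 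The only real obstacle I expect is the bilinear estimate of the second step — verifying that multiplying the $H_R$-H\"older field $F$ by the merely $H_R$-Lipschitz (but globally bounded) gradient $\nabla_Ru$ keeps the product in ${\rm BUC}^\alpha_R(\X)$, with a norm bound linear in each factor and with a constant independent of $F$ and $u$; the rest is the routine perturbation scheme.
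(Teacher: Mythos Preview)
Your proof is correct and follows essentially the same perturbation scheme as the paper: both reduce \eqref{variante} to a fixed-point problem via the Schauder estimate of Theorem \ref{THM_SCHAUDER} (applied with $G\equiv 0$, so that $N=L$) together with the bilinear bound $\|\langle F,\D_R u\rangle_R\|_{{\rm BUC}^\alpha_R(\X)}\lesssim\|F\|_{{\rm BUC}^\alpha_R(\X;H_R)}\|u\|_{{\rm BUC}^{2+\alpha}_R(\X)}$, and then invoke the contraction mapping theorem under the smallness condition on $F$. The only difference is cosmetic: the paper sets $\psi:=\lambda u-L_\alpha u$ and runs the contraction for $\psi\mapsto f+\langle F,\D_R R(\lambda,L_\alpha)\psi\rangle_R$ in ${\rm BUC}^\alpha_R(\X)$, whereas you run the contraction for $u\mapsto R(\lambda,L)(f+\langle F,\D_R u\rangle_R)$ directly in ${\rm BUC}^{2+\alpha}_R(\X)$; since $R(\lambda,L_\alpha)$ is a bijection between these spaces the two formulations are equivalent.
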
 

\begin{proof}
We start by setting $\psi:=\lambda u-L_\alpha u$. We stress that if $u\in {\rm BUC}_R^{\alpha}(\X)$, then $\psi\in {\rm BUC}_R^{\alpha}(\X)$. Now letting $T:=\langle F,\J_RR(\lambda,L_\alpha)\rangle _R$, then \eqref{variante} reads as
\begin{align}\label{eq1}
\psi-T\psi=f.
\end{align}
By Theorem \ref{THM_SCHAUDER}, whenever $\psi\in {\rm BUC}_R^{\alpha}(\X)$, there exists a constant $C_{\alpha,\lambda}>0$ such that
\[
\|T\psi\|_{{\rm BUC}_R^{\alpha}(\X)}\leq C_{\alpha,\lambda}\|F\|_{{\rm BUC}_R^{\alpha}(\X;H_R)}\|\psi\|_{{\rm BUC}_R^{\alpha}(\X)}.
\]
If $\|F\|_{{\rm BUC}^\alpha_R(\X;H_R)}\leq C^{-1}_{\alpha,\lambda}$, then the map $T:{\rm BUC}_R^{\alpha}(\X)\ra{\rm BUC}_R^{\alpha}(\X)$ is a contraction and so by the contraction mapping theorem \eqref{variante} and \eqref{eq1} have unique solutions $u$ and $\psi$ belonging to ${\rm BUC}_R^{\alpha}(\X)$. Moreover, since $u=R(\lambda,L_\alpha)\psi=R(\lambda,L)\psi$, by Theorem \ref{THM_SCHAUDER}, $u$ belongs to ${\rm BUC}_R^{2+\alpha}(\X)$.
\end{proof}

Clearly the condition on the ${\rm BUC}^\alpha_R(\X;H_R)$ norm of $F$ is very restrictive. We do not know if it is possible to replicate the technique of \cite{Pri09} to remove this requirement. We intend to study this situation in a future paper.



\section{Example}\label{Sect_Examples}

Let $Q\in\mathcal{L}(\X)$ be a positive, self-adjoint and compact operator. For $\alpha, \beta\geq 0$  we set $A:=-(1/2)Q^{-\beta}:Q^{\beta}(\X)\subseteq\X\ra\X$ and $R:=Q^{\alpha}$. Let $\{e_k\}_{k \in\N}$ be an orthonormal basis of $\X$ consisting of eigenvectors of $Q$, and let $\{\lambda_k\}_{k\in\N}$ be the eigenvalues of $Q$ associated with $\{e_k\}_{k \in\N}$. Since $Q$ is a compact and positive operator, there exists $k_0\in\N$ such that  $0<\lambda_k\leq \lambda_{k_0}$, for any $k\in\N$. Without loss of generality we assume $k_0=1$. Hence, for any $x\in Q^{\beta}(\X)$, we have
\begin{equation}\label{disAX}
\scal{Ax}{x}=\sum_{k=1}^{+\infty}-\frac{1}{2}\lambda^{-\beta}_k \scal{x}{e_k}^2\leq -\frac{1}{2}\lambda^{-\beta}_1\norm{x}^2.
\end{equation}
Since $Q$ is a compact and positive operator, then $\{e_k\}_{k \in\N}\subseteq Q^{\beta}(\X)=\Dom(A)$ and $\Dom(A)$ is dense in $\X$, so $A$ generates a strongly continuous, analytic and contraction semigroup in $\X$. Let $A_{\alpha}$ be the part of $A$ in $H_\alpha:=H_{Q^\alpha}$, we recall that 
\[
\Dom(A_{\alpha}):=\{x\in Q^{\alpha}(\X)\cap Q^{\beta}(\X)\, |\, Ax\in Q^{\alpha}(\X)\}.
\]
By \eqref{disAX}, for any $x\in \Dom(A_{\alpha})$, we have
\begin{align*}
\scal{Ax}{x}_\alpha=\langle Q^{-\alpha}Ax, Q^{-\alpha}x\rangle =\langle AQ^{-\alpha}x,Q^{-\alpha}x\rangle \leq -\frac{1}{2}\lambda^{-\beta}_1\|Q^{-\alpha}x\|^2=-\frac{1}{2}\lambda^{-\beta}_1\norm{x}_\alpha^2.
\end{align*}
Since $Q^{\alpha+\beta}(\X)$ is dense in $\X$ and $Q^{-\alpha}$ is a closed operator in $\X$, then $Q^{\alpha+\beta}(\X)$ is dense in $H_\alpha$, moreover $Q^{\alpha+\beta}(\X)= \Dom(A_{\alpha})$. Hence $A$ generates a strongly continuous and contraction semigroup in $H_\alpha$. We refer to \cite[Section 5.4-5.5]{DA-ZA4} for a study of the validity of Hypothesis \ref{STR1}\eqref{STR1.2}.

So all the results of the paper can be applied in this case, in particular Theorem \ref{THM_SCHAUDER} and Theorem \ref{Thm_Zyg} hold true. Observe that
\[
Q_t=\int^t_0e^{2sA}Q^{2\alpha}ds=\frac{1}{2}Q^{2\alpha+\beta}(\Id-e^{-2tQ^{-\beta}}),
\]
so, arguing as in \cite[Example 1]{CL21}, for any $t>0$ it holds
\begin{align*}
\|Q_t^{-1/2}e^{tA}\|_{\mathcal{L}(\X)}\approx Ct^{-\alpha/\beta-1/2},
\end{align*}
for some positive constant $C$. So \eqref{doom} is verified only for $\alpha=0$. An example of a function $G:\X\ra\X$ is any radial function defined as 
\[G(x)=\Phi(\|x\|^2),\qquad x\in\X,\] 
where $\Phi:[0,+\infty)\ra\R$ is a three times differentiable function such that $\Phi'(r)=O(r^{-1})$ for $r\ra+\infty$.

\appendix

\section{A result about uniformly continuous functions}\label{appen}

We recall a result reguarding uniformly continuous functions that we have used throughout the paper. The proof is standard and follows the same ideas of the proof of \cite[Lemma 3.3]{Pri99}, we provide it for completeness. 

\begin{prop}\label{Zone400}
Let $\mathcal{M}$ be a separable metric space with metric $d_{\mathcal{M}}$, $(Y,\mu)$ be a measurable space ($\mu$ is a finite, positive and complete measure) and $Z$ be a Banach space with norm $\norm{\cdot}_Z$. Consider a function $F:\mathcal{M}\times Y\ra Z$ that satisfies
\begin{enumerate}[\rm (i)]
\item for any $m\in\mathcal{M}$, the map $y\mapsto F(m,y)$ is measurable;

\item for $\mu$-a.e. $y\in Y$, the map $m\mapsto F(m,y)$ is uniformly continuous;\label{Zone400,2}

\item there exists a $\mu$-integrable function $g:Y\ra\R$ such that for all $m\in\mathcal{M}$ and $\mu$-a.e. $y\in Y$ it holds $\|F(m,y)\|_Z\leq g(y)$.
\end{enumerate}
The map $h:\mathcal{M}\ra Z$, defined as
\begin{align*}
h(m):=\int_Y F(m,y) \mu(dy),\qquad m\in\mathcal{M},
\end{align*}
is bounded and uniformly continuous.
\end{prop}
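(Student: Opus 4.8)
The plan is to prove the boundedness and uniform continuity of $h$ separately, with the uniform continuity being the substantive part. Boundedness is immediate: for every $m\in\mathcal{M}$ we have $\|h(m)\|_Z\leq\int_Y\|F(m,y)\|_Z\,\mu(dy)\leq\int_Y g(y)\,\mu(dy)<+\infty$ by hypothesis (iii) and the integrability of $g$, and this bound is independent of $m$. (Note also that $m\mapsto F(m,y)$ being uniformly continuous for $\mu$-a.e.\ $y$ together with the measurability in $y$ and separability of $\mathcal{M}$ guarantees that $y\mapsto F(m,y)$ is measurable enough and that the Bochner integral defining $h$ makes sense; the dominated function $g$ ensures Bochner integrability.)

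For uniform continuity the idea is a standard "approximate by a good set and split" argument. Fix $\eps>0$. Since $g$ is $\mu$-integrable and $\mu$ is finite, by absolute continuity of the integral there exists $\delta_0>0$ such that $\int_E g\,d\mu<\eps$ whenever $\mu(E)<\delta_0$; equivalently, by Egorov-type reasoning or simply by monotone convergence, there is a measurable set $Y_0\subseteq Y$ with $\mu(Y\setminus Y_0)$ as small as we like and on which the family $\{m\mapsto F(m,y)\}_{y\in Y_0}$ is, in a suitable sense, equi-uniformly continuous. The cleanest route: let $N\subseteq Y$ be the null set off which $m\mapsto F(m,y)$ is uniformly continuous. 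For $n\in\N$ and $y\in Y\setminus N$ let $\delta(n,y)>0$ be such that $d_{\mathcal{M}}(m,m')\leq\delta(n,y)$ implies $\|F(m,y)-F(m',y)\|_Z\leq 1/n$; one checks $\delta(n,\cdot)$ can be chosen measurable (e.g.\ as a countable infimum over a dense subset of $\mathcal{M}\times\mathcal{M}$, using separability). Set $Y_{n,j}:=\{y\in Y\setminus N\tc \delta(n,y)\geq 1/j\}$; then $Y_{n,j}\uparrow Y\setminus N$ as $j\to\infty$, so $\mu(Y\setminus Y_{n,j})\to 0$.

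Now, given $\eps>0$, choose $n$ with $1/n<\eps/(3\mu(Y))$, then choose $j$ large enough that $\int_{Y\setminus Y_{n,j}}g\,d\mu<\eps/3$ (possible by the previous paragraph and absolute continuity of the integral). For any $m,m'\in\mathcal{M}$ with $d_{\mathcal{M}}(m,m')\leq 1/j$ we split
\begin{align*}
\|h(m)-h(m')\|_Z&\leq\int_{Y_{n,j}}\|F(m,y)-F(m',y)\|_Z\,\mu(dy)+\int_{Y\setminus Y_{n,j}}\|F(m,y)-F(m',y)\|_Z\,\mu(dy)\\
&\leq\frac{1}{n}\mu(Y_{n,j})+2\int_{Y\setminus Y_{n,j}}g\,d\mu\leq\frac{\mu(Y)}{n}+\frac{2\eps}{3}<\eps.
\end{align*}
Hence $h$ is uniformly continuous, which completes the proof.

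The main obstacle is the measurability of $y\mapsto\delta(n,y)$ (equivalently, the measurability of the sets $Y_{n,j}$): one must exploit the separability of $\mathcal{M}$ to write the modulus-of-continuity condition as a countable conjunction of measurable conditions, so that $Y_{n,j}$ is measurable and the dominated-convergence/absolute-continuity steps apply. Everything else is a routine $\eps/3$ estimate. This is exactly the scheme of \cite[Lemma 3.3]{Pri99}, adapted to a Banach-space-valued integrand.
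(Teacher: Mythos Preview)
Your proof is correct. Both your argument and the paper's rely on the same essential ingredient: using the separability of $\mathcal{M}$ to make the modulus of continuity $y\mapsto\omega_{F,y}(t):=\sup_{d_{\mathcal{M}}(m,m')\leq t}\|F(m,y)-F(m',y)\|_Z$ measurable (as a countable supremum), together with the domination $\omega_{F,y}(t)\leq 2g(y)$. The difference is in how the conclusion is extracted. The paper simply writes $\|h(m)-h(m')\|_Z\leq\int_Y\omega_{F,y}(t_n)\,\mu(dy)$ for $d_{\mathcal{M}}(m,m')\leq t_n$ and applies dominated convergence directly to $\omega_{F,y}(t_n)\to 0$. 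You instead run an Egorov-style decomposition: build the sets $Y_{n,j}=\{y:\omega_{F,y}(1/j)\leq 1/n\}$, use $\mu(Y\setminus Y_{n,j})\to 0$ together with absolute continuity of $\int g\,d\mu$, and finish with an $\eps/3$ split. Your route is more explicit and gives a concrete $\delta$ for each $\eps$; the paper's route is shorter. Your remark that measurability of $Y_{n,j}$ is the only real obstacle is exactly right, and the paper handles it the same way you suggest.
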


\begin{proof}
The boundedness of $h$ is trivial and its continuity follows by the dominated convergence theorem (see \cite[Theorem 3, p. 45]{DU77}). So we just need to prove the uniform continuity of $h$. Let $N\subseteq Y$ be such that $\mu(N)=0$ and condition \eqref{Zone400,2} holds for all $y\in Y\setminus N$. For any $y\in Y\setminus N$, consider the modulus of continuity of the map $x\mapsto F(x,y)$ defined as
\begin{align*}
\omega_{F,y}(t):=\sup\set{\|F(m,y)-F(m',y)\|_Z\tc m,m'\in\mathcal{M},\ d_{\mathcal{M}}(m,m')\leq t},\qquad t>0.
\end{align*}
Observe that, by the separability of $\mathcal{M}\times\mathcal{M}$, for every $t>0$ there exists a countable set $D(t)\subseteq \mathcal{M}\times\mathcal{M}$ such that 
\begin{align*}
\omega_{F,y}(t):=\sup\set{\|F(m,y)-F(m',y)\|_Z\tc (m,m')\in D(t),\ d_{\mathcal{M}}(m,m')\leq t},\qquad t>0.
\end{align*}
The countability of $D(t)$ assures the measurability, with respect to $\mu$, of the map $y\mapsto\omega_{F,y}(t)$ for any $t>0$.
Moreover a standard computation gives that for any $y\in Y\setminus N$ and $t>0$ it holds $\omega_{F,y}(t)\leq 2g(y)$. Now let $(t_n)_{n\in\N}$ be a sequence of positive real numbers converging to zero. We have, for any $m,m'\in\X$ with $d_{\mathcal{M}}(m,m')\leq t_n$ 
\begin{align*}
\|h(m)-h(m')\|_Z\leq \int_Y\|F(m,y)-F(m',y)\|_Z\mu(dy)\leq \int_Y\omega_{F,y}(t_n)\mu(dy).
\end{align*}
The thesis follows by the dominated convergence theorem (see \cite[Theorem 3, p. 45]{DU77}).
\end{proof}

\section*{Declarations}


\subsection*{Fundings} The authors are members of GNAMPA (Gruppo Nazionale per l’Analisi Matematica, la Probabilit\`a
e le loro Applicazioni) of the Italian Istituto Nazionale di Alta Matematica (INdAM). The authors have been also partially supported by the research project PRIN 2015233N5A ``Deterministic and stochastic evolution equations'' of the Italian Ministry of Education, MIUR. S.F has been partially supported by the OK-INSAID project ARS01-00917. D.A.B. has been partially supported by the grant ``Stochastic differential equations and associated Markovian semigroups'' of the University of Pavia. The authors have no relevant financial or non-financial interests to disclose.

\subsection*{Research Data Policy and Data Availability Statements} Data sharing not applicable to this article as no datasets were generated or analysed during the current study.

\end{document}